\documentclass[a4paper]{amsart}
\usepackage{graphicx}
\usepackage{amssymb}
\usepackage{amsmath}
\usepackage{amsthm}
\usepackage{amscd}
\usepackage{color}
\usepackage{colordvi}
\usepackage[all,2cell]{xy}

\UseAllTwocells \SilentMatrices
\newtheorem{thm}{Theorem}[section]

\newtheorem{cor}[thm]{Corollary}
\newtheorem{lem}[thm]{Lemma}

\newtheorem{prop}[thm]{Proposition}
\theoremstyle{definition}

\theoremstyle{remark}
\newtheorem{rem}[thm]{\bf Remark}
\numberwithin{equation}{section}

\begin{document}
\title[Gorenstein homological dimension and some invariants of groups]{Gorenstein homological dimension and some invariants of groups}

\author[Wei Ren, Gang Yang] {Wei Ren, Gang Yang}

\thanks{}
\thanks{}
\subjclass[2010]{18G20, 18G25, 20J05}
\date{\today}

\thanks{E-mail: wren$\symbol{64}$cqnu.edu.cn; yanggang$\symbol{64}$mail.lzjtu.cn}
\keywords{Gorenstein homological dimension, Gorenstein flat, group ring}%

\maketitle

\dedicatory{}%
\commby{}%

\begin{abstract}
For any group $G$, the Gorenstein homological dimension ${\rm Ghd}_RG$ is defined to be the Gorenstein flat dimension of the coefficient ring $R$, which is considered as an $RG$-module with trivial group action. We prove that ${\rm Ghd}_RG < \infty$ if and only if the Gorenstein flat dimension of any $RG$-module is finite, if and only if there exists an $R$-pure $RG$-monic $R\rightarrow A$ with $A$ being $R$-flat and ${\rm Ghd}_RG = {\rm fd}_{RG}A$, where $R$ is a commutative ring with finite Gorenstein weak global dimension. As applications, properties of ${\rm Ghd}$ on subgroup, quotient group, extension of groups as well as Weyl group are investigated. Moreover, we compare the relations between some invariants such as ${\rm sfli}RG$, ${\rm silf}RG$, ${\rm spli}RG$, ${\rm silp}RG$, and Gorenstein projective, Gorenstein flat and PGF dimensions of $RG$-modules; a sufficient condition for Gorenstein projective-flat problem over group rings is given.
\end{abstract}

\section{Introduction}

Let $G$ be any group. Recall that the cohomological dimension ${\rm cd}_\mathbb{Z}G$ and homological dimension ${\rm hd}_\mathbb{Z}G$ are defined as the projective and flat dimension of the $\mathbb{Z}G$-module $\mathbb{Z}$ respectively, where $G$ acts trivially. Studying groups through these dimensions arose from both topological and algebraic sources, and has a long history in group theory.

Let $R$ be a commutative ring of coefficients. For any group $G$, the Gorenstein cohomological dimension ${\rm Gcd}_RG$ \cite{BDT09, ET18}, and the Gorenstein homological dimension ${\rm Ghd}_RG$ \cite{ABHS11, LR}, are defined as the Gorenstein projective and Gorenstein flat dimension of the trivial $RG$-module $R$, respectively. The notions of Gorenstein projective, injective and flat modules, which have the origin dating back to the study of $G$-dimension by Auslander and Bridger \cite{AB69} in 1960s, now form the basis of a version of relative homological algebra known as Gorenstein homological algebra \cite{EJ00}. The Gorenstein projective (resp. Gorenstein flat) dimension generalizes the projective (resp. flat) dimension, in the sense that whenever the latter is finite, then they coincide.

For any group $G$, there is an interesting result \cite[Theorem 2.7]{BDT09} that ${\rm Gcd}_\mathbb{Z}G < \infty$ if and only if there is a $\mathbb{Z}$-split $\mathbb{Z}G$-exact sequence $0\rightarrow \mathbb{Z} \rightarrow A$, where $A$ is a $\mathbb{Z}$-projective ($\mathbb{Z}$-free) $\mathbb{Z}G$-module such that the projective dimension ${\rm pd}_{\mathbb{Z}G}A$ of $A$ is finite; in particular,  ${\rm pd}_{\mathbb{Z}G}A = {\rm Gcd}_{\mathbb{Z}}G$. This was generalized in \cite[Theorem 1.7]{ET18} from the coefficient ring of integers $\mathbb{Z}$ to any commutative ring with finite global dimension.

First, we are motivated to characterize the finiteness of ${\rm Ghd}_RG$, and its relations with Gorenstein flat dimension of modules over the group ring $RG$. However, the arguments are not simply analogous to those for Gorenstein cohomological dimension ${\rm Gcd}_RG$. There are some evidences for this phenomenon. Unlike the well-known fact that every projective module is flat, the relation between Gorenstein projective and Gorenstein flat modules is not fully understood, and it is still an open problem whether Gorenstein projective modules are Gorenstein flat. There is a basic and important property that the class of Gorenstein projective modules is closed under extensions; see for example \cite[Theorem 2.5]{Hol04}. However, the analogous result for Gorenstein flat modules is not easy to prove. In \cite[Theorem 3.7]{Hol04}, an additional assumption that the base ring is coherent is needed. Until recently, the assertion was proved affirmatively over any ring by \cite[Corollary 4.12]{SS20}, where a notion of PGF-modules was invented.

In fact, projectivity connects closely to the splitness, however, flatness is a bit inseparable from purity. We succeed in getting the following, which extend the aforementioned results in \cite{BDT09, ET18} to the case of flatness.
Let $R$ be a commutative ring such that the supremum of flat dimension (length) of injective $R$-modules ${\rm sfli}R$ is finite. We show in Theorem \ref{thm:fGhd} that the following conditions are equivalent:
\begin{enumerate}
\item[(1)] ${\rm Ghd}_{R}G < \infty$.
\item[(2)] There exists an $R$-pure $RG$-exact sequence $0\rightarrow R\rightarrow A$, where $A$ is an $R$-flat $RG$-module of finite flat dimension.
\item[(3)] Every $RG$-module has finite Gorenstein flat dimension.
\end{enumerate}
In this case, we have an equality ${\rm Ghd}_RG = {\rm fd}_{RG}A$, and the following inequalities
\begin{center}${\rm sfli}R \leq {\rm sfli}RG = {\rm G.wgldim}RG \leq {\rm Ghd}_RG + {\rm sfli}R,$\end{center}
where ${\rm G.wgldim}RG$ denotes the Gorenstein weak global dimension of the group ring $RG$.
In particular, we show in Corollary \ref{cor:Gfd<+} that if ${\rm sfli}R < \infty$, then ${\rm Gfd}_{RG}M<\infty$ for any $RG$-module $M$ if and only if ${\rm Ghd}_RG = {\rm Gfd}_{RG}R <\infty$ for the trivial $RG$-module $R$; moreover, $\mathrm{Gfd}_{RG}M$ is bounded by ${\rm Ghd}_RG + {\rm Gfd}_RM$, i.e. $\mathrm{Gfd}_{RG}M \leq {\rm Ghd}_RG + {\rm Gfd}_RM$.

It is worth to remark that the assumption ${\rm sfli}R < \infty$ on the coefficient ring $R$ is not too restrictive.
It follows from \cite[Theorem 5.3]{Emm12} that ${\rm sfli}R < \infty$ if and only if every $R$-module has finite Gorenstein flat dimension. In this case, $R$ is called a ring with finite Gorenstein weak global dimension. In fact, we frequently concern modules with finite, rather than infinite, Gorenstein flat dimension. Rings with finite weak global dimension are strictly contained in those of finite Gorenstein weak global dimension. The most interesting examples of coefficient rings in dealing with applications of group rings in geometry and representation theory, such as the ring of integers $\mathbb{Z}$, the field of rationals $\mathbb{Q}$ and finite fields, are all of finite ``sfli''.

For further applications, we strengthen the role of Gorenstein homological dimension of groups by proving the following properties. As shown in Section 4, the arguments heavily rely on the above $R$-pure monomorphism $R\rightarrow A$ . More precisely, for any commutative ring $R$ with ${\rm sfli}R < \infty$ and any group $G$, we show that
\begin{enumerate}
  \item[(I)] ${\rm Ghd}_RH \leq {\rm Ghd}_RG$ for any  subgroup $H$ of $G$ (see Proposition \ref{prop:GhdH<G}).
  \item[(II)] If $1\to H\rightarrow G\rightarrow L\rightarrow 1$ is an extension of groups, then  ${\rm Ghd}_RG \leq {\rm Ghd}_RH + {\rm Ghd}_RL$ (see Proposition \ref{popo:GhdExt}).
  \item[(III)] ${\rm Ghd}_RG = {\rm Ghd}_R(G/H)$ for any finite normal subgroup $H$ of $G$ (see Proposition \ref{prop:GhdL=GhdG}).
 \item[(IV)] If $H$ is a finite subgroup of $G$, and $N_G(H)$ its normalizer in $G$, then ${\rm Ghd}_RW\leq {\rm Ghd}_RG$ for the Weyl group $W = N_G(H)/H$ (see Corollary \ref{cor:GhdW<G}).
\end{enumerate}
In particular, we generalize \cite[Proposition 4.11]{ABHS11} by (1) immediately: ${\rm Ghd}_\mathbb{Z}H \leq {\rm Ghd}_\mathbb{Z}G$, where $H$ was assumed to be a subgroup of $G$ of finite index in \cite{ABHS11}.

Moreover, we compare Gorenstein homological dimension of groups with the generalized homological dimension of groups introduced by Ikenaga \cite[III, Definition]{Ike84}; see details in Proposition \ref{prop:equ1} and \ref{prop:-sfliRG+}. In particular, we obtain in Corollary \ref{cor:equ2} that ${\rm Ghd}_\mathbb{Z}G < \infty$ if and only if $\underline{{\rm hd}}_\mathbb{Z}G < \infty$, and furthermore, ${\rm Ghd}_\mathbb{Z}G  = \underline{{\rm hd}}_\mathbb{Z}G$ in this case.

In Section 5, we are devoted to study the relation between ${\rm Ghd}_RG$ and some invariants such as ${\rm silp}RG$, ${\rm spli}RG$ and ${\rm sfli}RG$, which are closely related to the theory of Gorenstein projective, Gorenstein flat and PGF dimensions of modules; see for example \cite{CET21, DE22, Emm12}. First, we observe that if ${\rm Ghd}_RG<\infty$, then ${\rm sfli}R < \infty$ if and only if ${\rm sfli}RG < \infty$; moreover, if both ${\rm Ghd}_RG$ and ${\rm sfli}R$ are finite, then every Gorenstein projective $RG$-module is a PGF-module, and furthermore is a Gorenstein flat module; see Proposition \ref{prop:obs}.

For any commutative Gorenstein ring $R$ and any group $G$, it follows from \cite[Theorem 2.4]{GG87} that ${\rm spli}RG < \infty$ implies ${\rm silp}RG < \infty$. For the finiteness of ${\rm sfli}RG$ and ${\rm silf}RG$, we show in Proposition \ref{prop:silfRG<} that ${\rm silf}RG < \infty$ if and only if both ${\rm sfli}RG < \infty$ and ${\rm splf}RG < \infty$, that is, the comparison between ${\rm sfli}RG$ and ${\rm silf}RG$ is essentially about the relation between the projective and flat dimensions of modules. Then, we can show that if ${\rm silf}RG < \infty$, then ${\rm Ghd}_RG < \infty$ and every Gorenstein projective $RG$-module is Gorenstein flat; moreover, for any Gorenstein flat $RG$-module $M$, ${\rm PGF}\text{-}{\rm dim}_{RG}M = {\rm Gpd}_{RG}M < \infty$; see Corollary \ref{cor:GP-F(RG)}. In particular, if $R$ is commutative Gorenstein and $G$ is a group such that $RG$ is a left noetherian ring, we show in Proposition \ref{prop:silf=sfli} that ${\rm sfli}RG < \infty$ if and only if ${\rm silf}RG < \infty$; in this case ${\rm sfli}RG = {\rm silf}RG$. It is trivially true that for any finite group, its integral group is noetherian. Remark that for any virtually polycyclic group $G$, also named a polycyclic-by-finite group, the integral group ring $\mathbb{Z}G$ is noetherian; see details in \cite{Hall}. Finally, we show in Proposition \ref{prop:Gwd<Ggd} that there is an inequality ${\rm G.wgldim}RG \leq {\rm PGF}\text{-}{\rm gldim}RG = {\rm G.gldim}RG\leq {\rm G.wgldim}RG + {\rm splf}RG$ for any commutative ring $R$ and any group $G$.

\section{Preliminaries}

In this section, we recall some notions and facts which will be needed in the following.

\subsection*{Gorenstein flat dimension of modules}
Let $\Lambda$ be an associative ring with identity. Recall that $\mathbf{F} = \cdots\rightarrow F_{1}\rightarrow F_{0}\rightarrow F_{-1}\rightarrow\cdots$
is called a {\em totally acyclic complex of flat modules}, provided it is acyclic with each $F_i$ being a flat left $\Lambda$-module, and for any injective right $\Lambda$-module $I$, the complex remains acyclic after applying $I\otimes_\Lambda-$. A left $\Lambda$-module $M$ is called {\em Gorenstein flat} \cite{EJT93}, if there exists a totally acyclic complex of flat modules $\mathbf{F}$, such that $M \cong \mathrm{Ker}(F_0\rightarrow F_{-1})$.

Let $M$ be any left $\Lambda$-module. The {\em Gorenstein flat dimension} of $M$, denoted by $\mathrm{Gfd}_\Lambda M$, is defined by declaring that ${\rm Gfd}_{\Lambda}M \leq n$ if and only if $M$ has a Gorenstein flat resolution $0\rightarrow P_{n}\rightarrow\cdots\rightarrow P_{1}\rightarrow P_{0}\rightarrow M\rightarrow 0$ of length $n$, where each $P_i$ is a Gorenstein flat left $\Lambda$-module. The {\em Gorenstein weak global dimension} \cite{BM10} of $\Lambda$, denoted by ${\rm G.wgldim}\Lambda$, is defined as the supremum of Gorenstein flat dimensions of all left $\Lambda$-modules.

It was proved in \cite[Theorem 3.7]{Hol04} that for a right coherent ring, the class of Gorenstein flat left modules is closed under extensions and direct summands. This result was extended and generalized to any associative ring by \cite[Corollary 4.12]{SS20}. This basic property is crucial for studying homology of Gorenstein flat modules, however, it is not easy to prove. Thanks to
\cite[Corollary 4.12]{SS20}, now we can remove the assumption of coherent rings in many situations when dealing with Gorenstein flat modules and Gorenstein flat dimension of modules, see for example \cite[Theorem 3.14]{Hol04}.

\subsection*{Modules over group rings}
Let $G$ be a group, and $R$ be a commutative ring. A module over the group ring $RG$ is simply an $R$-module $M$ together with an action of $G$ on $M$. In particular, $R$ is an $RG$-module with trivial $G$-action, that is $gr = r$ for any $g\in G$ and any $r\in R$.

For an $RG$-module $M$, the {\em module of invariants of $M$}, denoted by $M^G$, is defined as the largest submodule of $M$ on which $G$ acts trivially, that is, $M^G:= \{m\in M | gm = m \text{ for all }g\in G\}$.  Analogously, the {\em module of coinvariants of $M$}, denoted by $M_G$, is defined to be the largest quotient of $M$ on which $G$ acts trivially.

Let $H$ be any subgroup of $G$. There exist simultaneously an induction functor $\mathrm{Ind}_H^G = RG\otimes_{RH}-$ and a coinduction functor $\mathrm{Coind}_H^G = \mathrm{Hom}_{RH}(RG, -)$ from the category of $RH$-modules ${\rm Mod}(RH)$ to the category of $RG$-modules ${\rm Mod}(RG)$. We denote the restriction functor from ${\rm Mod}(RG)$ to ${\rm Mod}(RH)$ by ${\rm Res}_H^G$. In particular, for any $RG$-module $M$, ${\rm Res}_H^GM$ is exactly the underlying $R$-module if we consider the subgroup $H$ formed only by the identity element of $G$. It is clear that $({\rm Ind}_H^G, {\rm Res}_H^G)$ and $({\rm Res}_H^G, {\rm Coind}_H^G)$ are adjoint pairs of functors. To simplify the notations, in the following we will denote ${\rm Res}_H^GM$ by $M$ if there is no risk of ambiguity.

\subsection*{Gorenstein homological dimension of groups}
Recall that for any group $G$, the {\em Gorenstein homological dimension} of $G$ is defined to be the Gorenstein flat dimension of the trivial $\mathbb{Z}G$-module $\mathbb{Z}$; see \cite[Definition 4.5]{ABHS11}. Analogously, we may define the Gorenstein homological dimension of $G$ over any commutative ring $R$, denoted by ${\rm Ghd}_{R}G$, to be the Gorenstein flat dimension of the trivial $RG$-module $R$; see \cite[Definition 2.5]{LR}. It follows from \cite[Corollary 2.8]{LR} that if the coefficient ring $R$ is $\mathbb{Z}$-torsion-free, then ${\rm Ghd}_RG$ is a refinement of ${\rm Ghd}_\mathbb{Z}G$, that is, ${\rm Ghd}_RG \leq {\rm Ghd}_\mathbb{Z}G$.

\subsection*{Invariants silp, spli, silf and sfli}
The Gorenstein flat dimension is closely related to some homological invariants. Recall that in connection with the existence of complete cohomological functors in the category of left $\Lambda$-modules, Gedrich and Gruenberg have defined in \cite{GG87} the invariant ${\rm silp}\Lambda$ as the supremum of the injective length (dimension) of projective left $\Lambda$-modules, and the invariant ${\rm spli}\Lambda$ as the supremum of the projective length (dimension) of injective left $\Lambda$-modules. Analogously, we use ${\rm silf}\Lambda$ to denote the supremum of the injective length (dimension) of flat left $\Lambda$-modules, and use ${\rm sfli}\Lambda$ to denote the supremum of the flat length (dimension) of injective left $\Lambda$-modules.

Note that if ${\rm sfli}\Lambda$ is finite, then any acyclic complex of flat left $\Lambda$-modules is totally acyclic. It follows from \cite[Theorem 5.3]{Emm12} and \cite[Corollary 1.5]{CET21} that ${\rm G.wgldim}\Lambda = {\rm G.wgldim}\Lambda^{op}$ is finite if and only if ${\rm sfli}\Lambda = {\rm sfli}\Lambda^{op}$ is finite; moreover, in this case one has ${\rm G.wgldim}\Lambda =  {\rm sfli}\Lambda < \infty$, and $\Lambda$ is called a {\em ring with finite Gorenstein weak global dimension}.

\section{Finiteness of Gorenstein homological dimension of groups}

Throughout the paper, $R$ is assumed to be a commutative ring, $G$ is a group. All modules over the group ring $RG$ are left $RG$-modules unless stated otherwise.

In this section, we intend to characterize the finiteness of ${\rm Ghd}_RG$, which is equivalent to that every $RG$-module has finite Gorenstein flat dimension; see Theorem \ref{thm:fGhd}. We begin with the following observation.

\begin{lem}\label{lem:R-Gf}
If ${\rm sfli}R < \infty$, then any Gorenstein flat $RG$-module is also a Gorenstein flat $R$-module.
\end{lem}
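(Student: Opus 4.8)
Let $M$ be a Gorenstein flat $RG$-module. The goal is to show $M$ is Gorenstein flat as an $R$-module, i.e. that $\mathrm{Res}_1^G M$ (the underlying $R$-module) admits a totally acyclic complex of flat $R$-modules whose relevant kernel is $M$. The first point to exploit is that since $M$ is Gorenstein flat over $RG$, there is a totally acyclic complex $\mathbf{F}$ of flat $RG$-modules with $M\cong\mathrm{Ker}(F_0\to F_{-1})$. Now restrict this complex to $R$-modules. Each $F_i$ is a flat $RG$-module, hence a flat $R$-module (since $RG$ is free, hence flat, as an $R$-module, and flatness is transitive); and $\mathbf{F}$ is still acyclic as a complex of $R$-modules. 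So $M$ sits as a kernel in an acyclic complex of flat $R$-modules. The only thing missing is \emph{total} acyclicity over $R$: one must check that $I\otimes_R\mathbf{F}$ stays acyclic for every injective $R$-module $I$.

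\medskip

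\textbf{Key steps.} (1) Reduce to showing $\mathbf{F}$, viewed as an acyclic complex of flat $R$-modules, is totally acyclic. (2) Here is where the hypothesis $\mathrm{sfli}\,R<\infty$ enters decisively: as recalled in Section 2 (the remark following the definitions of $\mathrm{silp},\mathrm{spli},\mathrm{silf},\mathrm{sfli}$), if $\mathrm{sfli}\,R<\infty$ then \emph{every} acyclic complex of flat $R$-modules is automatically totally acyclic. Applying this to $\mathbf{F}$ (now regarded over $R$) gives that $\mathbf{F}$ is a totally acyclic complex of flat $R$-modules. (3) Since $M\cong\mathrm{Ker}(F_0\to F_{-1})$ as $R$-modules as well, $M$ is Gorenstein flat over $R$ by definition. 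This finishes the argument.

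\medskip

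\textbf{Main obstacle.} There is essentially no obstacle once one has the ``$\mathrm{sfli}\,R<\infty \Rightarrow$ acyclic complexes of flat modules are totally acyclic'' principle in hand — the proof is a short two-move restriction-of-scalars argument. The one place that deserves a line of care is justifying that a flat $RG$-module restricts to a flat $R$-module: this is because $RG$ is $R$-free (a free $R$-module on the underlying set of $G$), so $RG$ is $R$-flat, and a flat module over a flat $R$-algebra is flat over $R$; equivalently, for a flat $RG$-module $F$ and any exact sequence of $R$-modules $0\to X\to Y$, tensoring with $RG$ over $R$ keeps it exact, then tensoring with $F$ over $RG$ keeps it exact, and $F\otimes_{RG}(RG\otimes_R -)\cong F\otimes_R -$. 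The other line worth stating explicitly is that acyclicity of $\mathbf{F}$ is insensitive to the change of rings $RG\rightsquigarrow R$, since the underlying abelian-group complex is unchanged. Everything else is bookkeeping, so this lemma serves as a clean technical foundation for the rest of the paper.
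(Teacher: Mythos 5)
Your proof is correct and follows essentially the same route as the paper: restrict the totally acyclic complex of flat $RG$-modules to $R$, note each term is $R$-flat, and invoke the fact that ${\rm sfli}R<\infty$ forces every acyclic complex of flat $R$-modules to be totally acyclic. The extra justification you give for flatness of the restriction (via $RG$ being $R$-free) is fine but is taken for granted in the paper's argument.
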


\begin{proof}
Let $M$ be a Gorenstein flat $RG$-module. There is a totally acyclic complex of flat $RG$-modules $\cdots\rightarrow F_{1}\rightarrow F_{0}\rightarrow F_{-1}\rightarrow\cdots$  such that $M\cong \mathrm{Ker}(F_0\rightarrow F_{-1})$. Since any flat $RG$-module is also $R$-flat, by restricting this totally acyclic complex, we get an acyclic complex of flat $R$-modules. Noting that ${\rm sfli}R < \infty$, every acyclic complex of flat $R$-modules is totally acyclic. Hence, $M$ is also Gorenstein flat as an $R$-module.
\end{proof}

\begin{prop}\label{prop:PureMonic}
Assume ${\rm sfli}R < \infty$. Let $M$ be an $RG$-module with ${\rm Gfd}_{RG}M < \infty$. If the flat dimension ${\rm fd}_RM$ of the underlying $R$-module is finite, then there exists an $R$-pure $RG$-exact sequence $0\rightarrow M\rightarrow N \rightarrow L\rightarrow 0$, for which $L$ is an $R$-flat $RG$-module and ${\rm Gfd}_{RG}M = {\rm fd}_{RG}N$.
\end{prop}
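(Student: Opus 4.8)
The statement we want is: given $\mathrm{sfli}\,R<\infty$, an $RG$-module $M$ with $\mathrm{Gfd}_{RG}M=n<\infty$ and $\mathrm{fd}_R M<\infty$, we should produce an $R$-pure $RG$-exact sequence $0\to M\to N\to L\to 0$ with $L$ an $R$-flat $RG$-module and $\mathrm{fd}_{RG}N=n$. This is a Gorenstein-flat analogue of the familiar ``completion'' trick used for Gorenstein projective dimension, where one replaces a module of finite $G$-projective dimension by a split extension with a module of finite projective dimension. Here ``split'' gets replaced by ``$R$-pure'' and ``projective'' by ``flat'', exactly as the introduction advertises.

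**Key steps.** First I would invoke the characterization of finite Gorenstein flat dimension: since $\mathrm{Gfd}_{RG}M=n<\infty$ (and $\mathrm{sfli}\,R<\infty$, so that over $RG$ too the Gorenstein-flat theory is well behaved, the class of Gorenstein flats being closed under extensions by \cite[Corollary 4.12]{SS20}), there is a short exact sequence $0\to M\to N\to L'\to 0$ of $RG$-modules with $\mathrm{fd}_{RG}N<\infty$ and $L'$ Gorenstein flat — this is the standard ``exchange'' short exact sequence for modules of finite Gorenstein flat dimension (Holm's results, now ring-independent). Then I would measure dimensions: $\mathrm{fd}_{RG}N\le n$, and conversely $\mathrm{fd}_{RG}N\ge\mathrm{Gfd}_{RG}M=n$ because $M$ is a syzygy of $L'$ shifted against $N$, so $\mathrm{fd}_{RG}N=n$ provided $n\ge 1$; the case $n=0$ is handled separately (take $N=M$, $L=0$). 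Second, I must upgrade ``$RG$-exact'' to ``$R$-pure.'' Here is where I use $\mathrm{fd}_R M<\infty$: restricting to $R$, the sequence $0\to M\to N\to L'\to 0$ has $\mathrm{fd}_R M<\infty$ and $L'$ is a Gorenstein flat $R$-module by Lemma \ref{lem:R-Gf}. A Gorenstein flat module of finite flat dimension is flat; but $L'$ need not have finite $R$-flat dimension, so instead I argue purity directly: an $R$-short exact sequence ending in a Gorenstein flat module whose kernel has finite flat dimension is $R$-pure, because applying $I\otimes_R-$ for injective $I$ keeps it exact (totally acyclic resolutions of $L'$ remain acyclic after $I\otimes_R-$, and one chases the resulting long exact $\mathrm{Tor}$ sequence using $\mathrm{Tor}^R_{\ge 1}(I,N)=0$ once $\mathrm{fd}_R N<\infty$, which follows from $\mathrm{fd}_R M<\infty$ and $L'$ Gorenstein flat over $R$), and purity over a commutative ring is detected by injective test modules. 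So set $L:=L'$; it only remains to note $L$ is $R$-flat: purity of $0\to M\to N\to L\to 0$ plus $\mathrm{fd}_R L<\infty$?— no, rather $L=L'$ is Gorenstein flat over $R$ and now $0\to M\to N\to L\to 0$ is $R$-pure with $\mathrm{fd}_R M<\infty$ and $\mathrm{fd}_R N<\infty$, forcing $\mathrm{fd}_R L<\infty$; combined with $L$ Gorenstein flat over $R$ this gives $L$ is $R$-flat.

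**Main obstacle.** The delicate point is establishing $R$-purity — i.e. that the exchange sequence can be chosen, or automatically is, $R$-pure. The clean route is: choose the exchange sequence $0\to M\to N\to L\to 0$ so that additionally $\mathrm{fd}_{RG}N=n$ and, crucially, that as $R$-modules it is pure. One gets this by building $N$ as a pushout in a resolution where the relevant connecting maps are controlled; but more efficiently one shows purity is \emph{automatic}. The argument is: $L$ is Gorenstein flat over $R$, so there is an exact $0\to L\to F\to L''\to 0$ with $F$ flat over $R$ and $L''$ Gorenstein flat over $R$; forming the pullback along $M\hookrightarrow N$ one reduces to the case where the quotient is $R$-flat, which is visibly $R$-pure, and then transfers purity back. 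I expect the bookkeeping of these pullback/pushout diagrams, together with the careful separation of the $n=0$ and $n\ge1$ cases for the dimension equality, to be the main technical burden; the conceptual input (Lemma \ref{lem:R-Gf}, closure of Gorenstein flats under extensions, and ``Gorenstein flat $+$ finite flat dimension $\Rightarrow$ flat'') is all already available.
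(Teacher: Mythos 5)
Your overall route is the paper's: produce an $RG$-exact sequence $0\to M\to N\to L\to 0$ with $L$ Gorenstein flat over $RG$ and $\mathrm{fd}_{RG}N=\mathrm{Gfd}_{RG}M$, then use Lemma \ref{lem:R-Gf} plus the fact that a Gorenstein flat module of finite flat dimension is flat to see that $L$ is $R$-flat, whence the sequence is automatically $R$-pure. (The paper builds the sequence by pushing out Holm's sequence $0\to K\to X\to M\to 0$ with $X$ Gorenstein flat and $\mathrm{fd}_{RG}K=n-1$ \cite[Theorem 3.23]{Hol04} along an embedding $X\to F$ with $F$ flat and Gorenstein flat cokernel; the middle row $0\to K\to F\to N\to 0$ gives $\mathrm{fd}_{RG}N=n$ directly and the construction covers $n=0$ uniformly.) Two of your steps, however, are wrong as written. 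First, the fallback for $n=0$, ``take $N=M$, $L=0$'', fails: a Gorenstein flat module need not be flat, so $\mathrm{fd}_{RG}M$ may be infinite while the statement demands $\mathrm{fd}_{RG}N=0$; for $n=0$ you must still embed $M$ into a flat $RG$-module with Gorenstein flat cokernel. Second, your ``direct'' purity argument rests on two false assertions: $\mathrm{fd}_RN<\infty$ does not give $\mathrm{Tor}^R_{\geq 1}(I,N)=0$ for injective $I$ (over $\mathbb{Z}$ one has $\mathrm{Tor}_1(\mathbb{Q}/\mathbb{Z},\mathbb{Z}/2)\neq 0$ although $\mathrm{fd}\,\mathbb{Z}/2=1$), and purity is not detected by tensoring with injective modules (over $k[x]/(x^2)$ every injective is free, yet $0\to (x)\to R\to R/(x)\to 0$ is not pure). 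The intermediate claim that $L'$ ``need not have finite $R$-flat dimension'' is also mistaken, and is contradicted by your own closing sentence.

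Fortunately these detours are unnecessary, and the correct argument is latent in your last sentence once it is reordered: $\mathrm{fd}_RL<\infty$ follows from $\mathrm{fd}_RM<\infty$ and $\mathrm{fd}_RN<\infty$ by the long exact Tor sequence alone, with no appeal to purity; then $L$ is Gorenstein flat over $R$ by Lemma \ref{lem:R-Gf} (this is where $\mathrm{sfli}\,R<\infty$ enters), hence $R$-flat by \cite[Corollary 10.3.4]{EJ00}, and a short exact sequence whose cokernel is flat is pure. With that reordering, the $n=0$ case repaired, and the dimension count $\mathrm{fd}_{RG}N=n$ made explicit (your Tor-against-injectives comparison does work here, or one reads it off the pushout as the paper does), your proof coincides with the paper's.
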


\begin{proof}
Let ${\rm Gfd}_{RG}M  = n$. It follows from \cite[Theorem 3.23]{Hol04} that there exists an exact sequence $0\rightarrow K\rightarrow X\rightarrow M\rightarrow 0$, where $X$ is a Gorenstein flat $RG$-module, and ${\rm fd}_{RG}K = n-1$. For $X$, there is an exact sequence of $RG$-modules
$0\rightarrow X\rightarrow F\rightarrow L\rightarrow 0$, where $F$ is flat and $L$ is Gorenstein flat. We consider the following pushout of $X\rightarrow M$ and $X\rightarrow F$:
$$\xymatrix{ & & 0\ar[d] & 0\ar[d] \\
0 \ar[r] &K \ar@{=}[d] \ar[r] & X \ar[d]\ar[r] &M \ar@{-->}[d]\ar[r] &0 \\
0 \ar[r] &K \ar[r] & F \ar@{-->}[r] \ar[d] &N \ar[r]\ar[d] & 0\\
&  & L \ar[d] \ar@{=}[r] & L\ar[d]\\
&  & 0 & 0
  }$$
From the middle row we infer that ${\rm fd}_{RG}N = {\rm fd}_{RG}K + 1 = n$, which also implies the finiteness of ${\rm fd}_RN$. In view of the assumption ${\rm fd}_RM < \infty$, we infer from the right column that ${\rm fd}_RL<\infty$. For the Gorenstein flat $RG$-module $L$, it follows from Lemma \ref{lem:R-Gf} that $L$ is also Gorenstein flat as an $R$-module. Furthermore, we imply that $L$ is $R$-flat since flat dimension of any Gorenstein flat module is either zero or infinity; see \cite[Corollary 10.3.4]{EJ00}. Hence, the exact sequence of $RG$-modules $0\rightarrow M\rightarrow N\rightarrow L\rightarrow 0$ is $R$-pure exact. This completes the proof.
\end{proof}

\begin{cor}\label{cor:PureMonic}
Assume ${\rm sfli}R < \infty$. If ${\rm Ghd}_{R}G$ is finite, then there exists an $R$-pure $RG$-exact sequence $0\rightarrow R\rightarrow A$, where $A$ is an $R$-flat $RG$-module such that ${\rm Ghd}_RG = {\rm fd}_{RG}A$.
\end{cor}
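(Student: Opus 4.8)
The plan is to derive Corollary~\ref{cor:PureMonic} as a direct specialization of Proposition~\ref{prop:PureMonic} to the trivial $RG$-module $M = R$. First I would check that the hypotheses of the proposition are met: by definition ${\rm Ghd}_RG = {\rm Gfd}_{RG}R$, so the assumption ${\rm Ghd}_RG < \infty$ says precisely that ${\rm Gfd}_{RG}R < \infty$; and the underlying $R$-module of the trivial module $R$ is $R$ itself, which is free over $R$, so ${\rm fd}_RR = 0 < \infty$. Thus both finiteness conditions required by Proposition~\ref{prop:PureMonic} hold.

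Next I would invoke Proposition~\ref{prop:PureMonic} with $M = R$ to obtain an $R$-pure $RG$-exact sequence $0 \rightarrow R \rightarrow N \rightarrow L \rightarrow 0$ with $L$ an $R$-flat $RG$-module and ${\rm Ghd}_RG = {\rm Gfd}_{RG}R = {\rm fd}_{RG}N$. Setting $A := N$, the monomorphism $R \rightarrow A$ is $R$-pure (being part of an $R$-pure exact sequence), and ${\rm fd}_{RG}A = {\rm Ghd}_RG$ as desired.

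The only remaining point is to confirm that $A = N$ is $R$-flat. This follows because $R$ and $L$ are both $R$-flat: from the short exact sequence of underlying $R$-modules $0 \rightarrow R \rightarrow A \rightarrow L \rightarrow 0$, flatness of the two outer terms forces $A$ to be $R$-flat (the class of flat modules is closed under extensions), or alternatively one reads off ${\rm fd}_RA \leq \max\{{\rm fd}_RR, {\rm fd}_RL\} = 0$. Either way, $A$ is an $R$-flat $RG$-module, completing the argument. I expect no genuine obstacle here: the corollary is essentially a bookkeeping instance of the proposition, and the main work has already been done in establishing Proposition~\ref{prop:PureMonic}.
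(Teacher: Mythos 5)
Your proposal is correct and matches the paper's intent exactly: the corollary is stated without proof as an immediate specialization of Proposition~\ref{prop:PureMonic} to the trivial module $M=R$, which is precisely what you do. Your added observation that $A=N$ is $R$-flat (as an extension of the $R$-flat modules $R$ and $L$) correctly supplies the one detail the paper leaves implicit.
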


Let $M$ and $N$ be left $RG$-modules. It is worth to note that there are two different ways to define the $RG$-module structures for the tensor product $M\otimes_R N$. On the one hand, $M\otimes_{R}N$ is an $RG$-module where $G$ acts diagonally, i.e. $g(m\otimes n) = gm \otimes gn$, but on the other hand, $M\otimes_{R}N$ is an induced left $RG$-module by $g(m\otimes n) = gm \otimes n$.

\begin{lem}\label{lem:RG-antisym}
For any left $RG$-modules $M$ and $N$, there is an isomorphism of induced left $RG$-modules $M\otimes_R N\cong N\otimes_R M$.
\end{lem}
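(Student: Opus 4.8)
The plan is to produce the isomorphism by hand, using the evident transposition of tensor factors. Define $\tau\colon M\otimes_R N\to N\otimes_R M$ on pure tensors by $\tau(m\otimes n)=n\otimes m$ and extend $R$-linearly. Since $R$ is commutative this is the classical natural isomorphism of $R$-modules, and it has an explicit two-sided inverse, namely the transposition $N\otimes_R M\to M\otimes_R N$, $n\otimes m\mapsto m\otimes n$; hence well-definedness, $R$-linearity and bijectivity are all automatic, and the entire content of the lemma is that $\tau$ is compatible with the group-ring actions carried by its source and target.

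So the work is the equivariance check. Unwinding the definitions recalled just above, one verifies the identity $\tau\bigl(g\cdot(m\otimes n)\bigr)=g\cdot\tau(m\otimes n)$ on pure tensors and then extends it to all elements, since pure tensors generate and $\tau$ together with both group actions is additive. It is worth noting that $\tau$ is manifestly equivariant for the diagonal $RG$-structures, because $\tau(gm\otimes gn)=gn\otimes gm$; for the induced structures one argues in the same spirit, keeping careful track of which of the two tensor factors the group ring acts through on each side, and recording how the induced and diagonal structures on a tensor product of $RG$-modules are related.

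The step I expect to be the main obstacle is exactly this equivariance verification — not for any deep reason, but because the induced structure uses only part of the available data, so one must be scrupulous about the roles of $M$ and $N$ on the two sides; once the identifications are lined up correctly the isomorphism drops out. Everything else (well-definedness, the explicit inverse, and naturality in $M$ and $N$) is routine and will be stated without detailed computation.
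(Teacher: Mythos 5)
You have deferred precisely the step that fails. With the convention fixed just before the lemma --- the induced structure on $M\otimes_R N$ is $g(m\otimes n)=gm\otimes n$, and likewise $g(n\otimes m)=gn\otimes m$ on $N\otimes_R M$ --- the transposition $\tau$ is not equivariant: $\tau\bigl(g\cdot(m\otimes n)\bigr)=n\otimes gm$ while $g\cdot\tau(m\otimes n)=gn\otimes m$, and nothing in the tensor product over $R$ identifies these two elements. The identity that would rescue $\tau$, namely $gm\otimes n=m\otimes gn$, holds in $M\otimes_{RG}N$ (after converting one factor into a right module via $g\mapsto g^{-1}$), not in $M\otimes_R N$. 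Moreover, no amount of careful bookkeeping can repair the argument, because the two induced modules need not be isomorphic by \emph{any} map: take $M=R$ with the trivial action and $N=RG$. Then $M\otimes_R N$ with its induced structure is the underlying $R$-module of $RG$ equipped with the trivial $G$-action, whereas $N\otimes_R M$ is the regular module $RG$; for $R$ a field and $G=\mathbb{Z}$ the latter is free while the former is a direct sum of copies of the non-projective trivial module, so they are not isomorphic.

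For comparison, the paper's own proof stalls at the same point: it equips $M\otimes_R N$ with a left action through $M$ and a right action through $N$ and then writes $gm\otimes n=g(m\otimes n)=(m\otimes n)g^{-1}=m\otimes gn$; the outer equalities are the definitions of two different, merely commuting, actions, and the middle equality is exactly the unjustified identification your plan would need. So do not attempt to complete the equivariance check as outlined. The statements that are actually available here are the (trivial) equivariance of $\tau$ for the diagonal actions, which you already noted, and the untwisting isomorphism between the diagonal and induced structures on $F\otimes_R M$ when $F$ is flat (or induced) over $RG$, as used in Lemma \ref{lem:IndGF}; the lemma as stated, with ``induced'' meaning the action through the first tensor factor on each side, is not correct, and whatever is needed downstream (e.g.\ in Proposition \ref{prop:Gfd<fd}) has to be reformulated in those terms.
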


\begin{proof}
Since $R$ is assumed to be commutative, by using the anti-automorphism $g\rightarrow g^{-1}$ of $G$, we can regard any left $RG$-module $X$ as a right $RG$-module by setting $gx = xg^{-1}$ for any $g\in G$ and $x\in X$. Thus, we may regard $M\otimes_R N$ as both a left $RG$-module with $G$-actions induced from $_{RG}M\otimes_R-$ by setting $g(m\otimes n) = (gm)\otimes n$, and as a right $RG$-module with $G$-actions induced from $-\otimes_R N_{RG}$ by setting $(m\otimes n)g^{-1} =m\otimes (ng^{-1}) = m\otimes gn$.

Note that for any $g\in G$ and any $m\otimes n\in M\otimes_{R}N$, one has $gm\otimes n = g(m\otimes n) = (m\otimes n)g^{-1} = m\otimes gn$. Similarly, for any $n\otimes m\in N\otimes_R M$, $gn\otimes m = g(n\otimes m) = (n\otimes m)g^{-1} = n\otimes gm$. It is direct to check that the map $\varphi: M\otimes_R N \rightarrow N\otimes_R M$ given by $m\otimes n\mapsto n\otimes m$ is an isomorphism of the induced left $RG$-modules.
\end{proof}

\begin{rem}
Since $R$ is assumed to be commutative, for any $RG$-module, one can avoid considering both left and right modules by using the anti-automorphism $g\rightarrow g^{-1}$ of $G$. Thus, for any two left $RG$-modules $M$ and $N$, the tensor product $M\otimes_{RG}N$ makes sense by introducing the relations $gm\otimes n = mg^{-1}\otimes n = m\otimes g^{-1}n$.

For any $RG$-modules $M$ and $N$, By replacing $n$ in the above relations by $gn$, we get that $gm\otimes gn = m\otimes n$. Hence, $M\otimes_{RG}N = (M\otimes_{R}N)_{G}$, where $G$ acts diagonally on $M\otimes_{R}N$. Moreover, there is an isomorphism $M\otimes_{RG}N\cong N\otimes_{RG}M$, that is, the bifunctor $-\otimes_{RG}-$ is commutative.
\end{rem}

\begin{lem}\label{lem:RG-Gf}
Let $\iota: R\rightarrow A$ be an $R$-pure monomorphism of $RG$-modules, where $A$ is $R$-flat. For any $RG$-module $N$, if the induced left $RG$-module $N\otimes_{R}A$ is Gorenstein flat, then so is $N$.
\end{lem}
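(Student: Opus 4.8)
The idea is to exploit the $R$-purity of $\iota\colon R\to A$ together with the ``antisymmetry'' isomorphism of Lemma~\ref{lem:RG-antisym} to show that $N$ is a direct summand, as an $RG$-module, of the Gorenstein flat module $N\otimes_R A$, and then to invoke that the class of Gorenstein flat $RG$-modules is closed under direct summands (\cite[Corollary 4.12]{SS20}). First I would apply the functor $N\otimes_R-$ to the $R$-pure exact sequence $0\to R\xrightarrow{\iota} A\to A/R\to 0$. Purity of the sequence over $R$ means exactly that it stays exact after tensoring with any $R$-module, so $0\to N\otimes_R R\to N\otimes_R A\to N\otimes_R(A/R)\to 0$ is exact; identifying $N\otimes_R R\cong N$ this gives a short exact sequence $0\to N\to N\otimes_R A\to N\otimes_R(A/R)\to 0$ of $R$-modules. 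The point is to check that, with the \emph{induced} $RG$-module structure $g(n\otimes a)=gn\otimes a$ on $N\otimes_R A$ (and likewise on the other terms, noting $\iota$ is $RG$-linear and $A/R$ inherits the diagonal-free induced structure), this is a sequence of $RG$-modules and $RG$-maps.

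Next I would argue that this short exact sequence of $RG$-modules is $R$-split, hence $RG$-split. Indeed, $\iota\colon R\to A$ being $R$-pure with $R$ and $A$ over a commutative ring: one needs a splitting of $N\to N\otimes_R A$ as $RG$-modules. Here is where the induced structure is essential. Using Lemma~\ref{lem:RG-antisym}, $N\otimes_R A\cong A\otimes_R N$ as induced left $RG$-modules, and on $A\otimes_R N$ the $G$-action is $g(a\otimes n)=ga\otimes n$. But wait — that is not quite the induced structure I want to split; rather, I would observe that the map $R\to A$ being $R$-pure gives, after tensoring with the \emph{flat} $R$-module $N$ on the appropriate side, a sequence that splits because purity of $0\to R\to A$ together with the extra leverage: actually the cleanest route is that purity of $\iota$ over $R$, combined with $A$ being $R$-flat, is inherited after $-\otimes_R N$, so $0\to N\to N\otimes_R A\to N\otimes_R(A/R)\to 0$ is again $R$-pure; and since $N$ is a Gorenstein flat $R$-module's... no — I do not know $N$ is $R$-Gorenstein flat yet, that is circular. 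Let me instead use that $N\otimes_R(A/R)$, carried with the induced $RG$-structure, is itself an induced module: by Lemma~\ref{lem:RG-antisym} $N\otimes_R(A/R)\cong (A/R)\otimes_R N = \mathrm{Ind}$ of the $R$-module $(A/R)\otimes_R N$ — more precisely an induced $RG$-module is one of the form $RG\otimes_R(-)$ up to the anti-automorphism, and such modules enjoy that any $R$-split sequence with induced quotient is $RG$-split. So the $R$-split sequence $0\to N\to N\otimes_R A\to N\otimes_R(A/R)\to 0$ is $RG$-split, exhibiting $N$ as an $RG$-module direct summand of $N\otimes_R A$.

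Finally, since $N\otimes_R A$ is Gorenstein flat as an $RG$-module by hypothesis, and the class of Gorenstein flat $RG$-modules is closed under direct summands (\cite[Corollary 4.12]{SS20}), we conclude $N$ is Gorenstein flat, as desired. \textbf{The main obstacle} I anticipate is the bookkeeping in the middle step: verifying carefully that $0\to N\to N\otimes_R A\to N\otimes_R(A/R)\to 0$ is genuinely a sequence of \emph{induced} $RG$-modules and $RG$-homomorphisms, and that the $R$-splitting afforded by purity can be upgraded to an $RG$-splitting — this is where the anti-automorphism trick of Lemma~\ref{lem:RG-antisym} and the remark on $-\otimes_{RG}-$ must be deployed with care, since the two competing $RG$-structures (diagonal versus induced) on tensor products are easy to conflate. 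Establishing the $RG$-splitting, rather than merely the $R$-splitting, is the crux; everything else is formal.
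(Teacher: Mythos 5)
The crux of your argument --- that $N$ is an $RG$-module direct summand of $N\otimes_R A$ --- is not available under the hypotheses, and this is a genuine gap. You only know that $\iota\colon R\to A$ is $R$-\emph{pure}; pure monomorphisms need not split, and the step where the sequence $0\to N\to N\otimes_R A\to N\otimes_R(A/R)\to 0$ goes from being ``$R$-pure'' to being ``$R$-split'' is asserted without justification. Indeed, if $\iota$ were $R$-split the lemma would be essentially trivial: any $R$-retraction $\pi\colon A\to R$ gives the $RG$-linear retraction $\mathrm{id}_N\otimes\pi$ for the induced structure (the $G$-action passes only through $N$), so the entire content of the lemma lies in the non-split case --- this is exactly the ``flatness goes with purity, not splitness'' point stressed in the introduction, which distinguishes this result from its Gorenstein projective analogue. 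Moreover, even granting an $R$-splitting, your upgrade ``$R$-split $\Rightarrow$ $RG$-split because the quotient is induced'' conflates two different notions: in this paper the \emph{induced} structure on $N\otimes_R(A/R)$ means $g(n\otimes\bar a)=gn\otimes\bar a$, i.e.\ the action goes through the factor $N$, not induction $RG\otimes_R(-)$ from the trivial subgroup. Such a module is not relatively $(RG,R)$-projective in general --- for instance, if $A/R$ happens to be $R$-free, then $N\otimes_R(A/R)$ is a direct sum of copies of $N$ itself --- so an $R$-split $RG$-exact sequence ending in it need not split over $RG$. Hence $N$ need not be a direct summand of $N\otimes_R A$, and the appeal to closure under direct summands does not apply.

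For comparison, the paper never realizes $N$ as a summand; it builds a totally acyclic complex of flat $RG$-modules around $N$. The right half is obtained by iterated short exact sequences $0\to N\to F_0\to N'\to 0$, where $R$-purity of $\iota$ is used only to show that $E\otimes_{RG}N\to E\otimes_{RG}F_0$ is monic for every injective right $RG$-module $E$, and the hypothesis is propagated ($N'\otimes_R A$ is again Gorenstein flat) using extension closure \cite[Corollary 4.12]{SS20} together with a coherent-free version of \cite[Proposition 3.8]{Hol04}. The left half is an ordinary flat resolution of $N$, shown to remain exact under $E\otimes_{RG}-$ by tensoring with $A$ and using that $E$ is a direct summand of $A\otimes_R E$ --- a splitting that comes from the injectivity of $E$, not from any splitting of $\iota$. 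If you want a short proof in the spirit of your proposal, the statement to aim for is that $N\to N\otimes_R A$ is a \emph{pure} monomorphism of $RG$-modules (for any right $RG$-module $X$ one has $X\otimes_{RG}(N\otimes_R A)\cong(X\otimes_{RG}N)\otimes_R A$ because $G$ acts through $N$, and $R$-purity of $\iota$ then gives injectivity), after which one would need the closure of Gorenstein flat modules under pure submodules (also due to \v{S}aroch and \v{S}\v{t}ov\'{\i}\v{c}ek), a strictly stronger fact than the closure under direct summands you cite. As written, the splitting argument does not go through.
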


\begin{proof}
Assume that $N\otimes_{R}A$ is a Gorenstein flat $RG$-module. Then, there is an exact sequence of $RG$-modules
$$0\longrightarrow  N\otimes_{R}A \stackrel{\alpha}\longrightarrow F_0\longrightarrow L\longrightarrow 0,$$
where $F_0$ is flat and $L$ is Gorenstein flat. By applying $N\otimes_{R}-$ to the $R$-pure monomorphism $\iota: R\rightarrow A$, we get a monomorphism of induced left $RG$-modules ${\rm Id}_N\otimes\iota: N = N\otimes_R R\rightarrow N\otimes_R A$. Let $\beta = \alpha({\rm Id}_N\otimes\iota): N\rightarrow F_0$, and consider the following commutative diagram
$$\xymatrix{ & & 0\ar[d] & 0\ar[d] \\
0 \ar[r] &N \ar@{=}[d] \ar[r]^{{\rm Id}_N\otimes \iota\quad} & N\otimes_R A \ar[d]^{\alpha}\ar[r] & N\otimes_R B \ar[d]\ar[r] &0 \\
0 \ar[r] &N \ar[r]^{\beta} & F_0 \ar[r] \ar[d] &N' \ar[r]\ar[d] & 0\\
&  & L \ar[d] \ar@{=}[r] & L\ar[d]\\
&  & 0 & 0
  }$$
where $B = {\rm Coker}\iota$, and $N' = {\rm Coker}\beta$.

Let $E$ be any injective right $RG$-module. It is clear that $${\rm Id}_E\otimes\alpha: E\otimes_{RG}(N\otimes_{R}A) \rightarrow E\otimes_{RG}F_0$$ is monic. Since $\iota: R\rightarrow A$ is an $R$-pure monomorphism, we infer that
$${\rm Id}_E \otimes ({\rm Id}_N\otimes\iota): E\otimes_{RG}N =(E\otimes_{RG}N)\otimes_{R}R  \rightarrow (E\otimes_{RG}N)\otimes_{R} A\cong E\otimes_{RG}(N\otimes_{R} A)$$
is monic. Hence, ${\rm Id}_E\otimes\beta: E\otimes_{RG}N \rightarrow E\otimes_{RG}F_0$ is a monomorphism, and moreover, we infer from the middle row of the above diagram that ${\rm Tor}_1^{RG}(E, N') = 0$.

By applying $-\otimes_{R}A$ to the middle row, we get an exact sequence of induced left $RG$-modules $$0 \longrightarrow N\otimes_{R}A \longrightarrow F_0\otimes_{R}A \longrightarrow N'\otimes_{R}A  \longrightarrow 0,$$
where $F_0 \otimes_{R}A $ is flat and $N\otimes_{R}A$ is assumed to be Gorenstein flat. Since $A$ is a flat $R$-module, there is an isomorphism ${\rm Tor}_i^{RG}(E, N'\otimes_R A)\cong {\rm Tor}_i^{RG}(E, N')\otimes_R A$ for any $i>0$, and moreover, we get ${\rm Tor}_1^{RG}(E,N'\otimes_{R} A) = 0$ from ${\rm Tor}_1^{RG}(E, N') = 0$. Note that by \cite[Corollay 4.12]{SS20}, the class of all Gorenstein flat modules is always closed under extensions. Then, we may get rid of the coherent assumption on rings in \cite[Proposition 3.8]{Hol04}, and obtain that the induced left $RG$-module $N'\otimes_{R}A$ is also Gorenstein flat.

Analogous to the above, for $N'$ we obtain an exact sequence of left $RG$-modules
$0\rightarrow N'\rightarrow F_{-1}\rightarrow N''\rightarrow 0$ with $F_{-1}$ being flat, which remains exact after applying $E\otimes_{RG}-$ for any injective right $RG$-module $E$, and for $N''$ the induced left $RG$-module $N''\otimes_{R}A$ is Gorenstein flat. Proceed in this manner, we obtain an acyclic complex
$$\xymatrix{\mathbf{F}_{\leq 0} = 0\ar[r]^{} & N\ar[r] &F_0 \ar[r]^{}& F_{-1}\ar[r] & F_{-2}\ar[r] & \cdots} $$
with each $F_i$ being a flat left $RG$-module, which remains acyclic after applying $E\otimes_{RG}-$ for any injective right $RG$-module $E$.

Now consider a flat resolution of $N$:
$$\mathbf{F}_{>0} = \xymatrix{\cdots\ar[r]^{} & F_{3}\ar[r] &F_2 \ar[r]^{}& F_1\ar[r] & N\ar[r]&  0.} $$
Since $A$ is $R$-flat, it is clear that the induced left $RG$-modules $F_i\otimes_R A$ are flat, and  $\mathbf{F}_{>0}^{\bullet}\otimes_R A\rightarrow N\otimes_R A$ is a flat resolution of the induced left $RG$-module $N\otimes_R A$. Invoking the assumption that $N\otimes_R A$ is a Gorenstein flat left $RG$-module, we get ${\rm Tor}_i^{RG}(E, N\otimes_{R}A) = 0$ for any injective right $RG$-module $E$ and any $i\geq 1$, and this implies that the complex $E\otimes_{RG}(\mathbf{F}_{>0}^{\bullet}\otimes_R A)\rightarrow E\otimes_{RG}(N\otimes_R A)$
is acyclic. Thus, we infer from the isomorphisms
$$(A\otimes_R E)\otimes_{RG}\mathbf{F}_{>0} \cong A\otimes_R(E\otimes_{RG}\mathbf{F}_{>0}) \cong (E\otimes_{RG}\mathbf{F}_{>0})\otimes_R A\cong E\otimes_{RG}(\mathbf{F}_{>0}\otimes_R A)$$
that the complex $(A\otimes_R E)\otimes_{RG}\mathbf{F}_{>0}$ is acyclic.

By applying $-\otimes_R E$ to the $R$-pure exact sequence $0\rightarrow R\rightarrow A\rightarrow B\rightarrow 0$, we get an exact sequence of induced right $RG$-modules
$$0\longrightarrow E=R\otimes_R E\longrightarrow A\otimes_R E\longrightarrow B\otimes_R E\longrightarrow 0.$$
The sequence is split since $E$ is injective, and moreover, $E$ is a direct summand of $A\otimes_R E$. Thus, as a direct summand of $(A\otimes_R E)\otimes_{RG}\mathbf{F}_{>0}$, the complex $E\otimes_{RG}\mathbf{F}_{>0}$ is acyclic.

Consequently, by pasting $\mathbf{F}_{\leq 0}$ and $\mathbf{F}_{>0}$ we will get a totally acyclic complex of flat $RG$-modules such that $N\cong \mathrm{Ker}(F_0\rightarrow F_{-1})$. This yields that $N$ is a Gorenstein flat $RG$-module. This completes the proof.
\end{proof}

\begin{lem}\label{lem:GF-H2G}
Let $H$ be any subgroup of $G$. For any Gorenstein flat $RH$-module $M$, the induced $RG$-module ${\rm Ind}_H^GM$ is also Gorenstein flat.
\end{lem}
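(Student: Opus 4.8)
The plan is to use the standard description of induced modules as a direct sum indexed by coset representatives, combined with the fact that the class of Gorenstein flat modules is closed under arbitrary direct sums and direct summands. Recall that $RG$ is a free right $RH$-module on a set of left coset representatives of $H$ in $G$, so for any $RH$-module $M$ we have an isomorphism of $R$-modules (and of abelian groups) ${\rm Res}_1^G{\rm Ind}_H^GM \cong \bigoplus_{gH} gM$, though of course as an $RG$-module ${\rm Ind}_H^GM$ carries the twisted action. The key point is to produce a totally acyclic complex of flat $RG$-modules exhibiting ${\rm Ind}_H^GM$ as a Gorenstein flat module, starting from one witnessing that $M$ is Gorenstein flat over $RH$.

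First I would take a totally acyclic complex of flat $RH$-modules $\mathbf{F} = \cdots\rightarrow F_1\rightarrow F_0\rightarrow F_{-1}\rightarrow\cdots$ with $M\cong \mathrm{Ker}(F_0\rightarrow F_{-1})$. Applying the exact functor ${\rm Ind}_H^G = RG\otimes_{RH}-$ (exact because $RG$ is free, hence flat, as a right $RH$-module) yields an acyclic complex $RG\otimes_{RH}\mathbf{F}$ of $RG$-modules; each $RG\otimes_{RH}F_i$ is a flat $RG$-module since $RG$ is flat over $RH$ and tensor products of flats are flat (more precisely, ${\rm Ind}_H^G$ sends flat $RH$-modules to flat $RG$-modules). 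Since ${\rm Ind}_H^G$ commutes with kernels of the relevant maps — again by exactness — we get ${\rm Ind}_H^GM\cong \mathrm{Ker}(RG\otimes_{RH}F_0\rightarrow RG\otimes_{RH}F_{-1})$. So it remains to check total acyclicity: for any injective right $RG$-module $E$, the complex $E\otimes_{RG}(RG\otimes_{RH}\mathbf{F})$ must be acyclic.

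The main obstacle — the only real content — is this last verification, and it rests on the adjunction/change-of-rings isomorphism $E\otimes_{RG}(RG\otimes_{RH}F_i)\cong E\otimes_{RH}F_i$, natural in $F_i$, so that $E\otimes_{RG}(RG\otimes_{RH}\mathbf{F})\cong ({\rm Res}_H^GE)\otimes_{RH}\mathbf{F}$ as complexes. Thus acyclicity of the left side is equivalent to acyclicity of $({\rm Res}_H^GE)\otimes_{RH}\mathbf{F}$. Here I would use that the restriction ${\rm Res}_H^GE$ of an injective right $RG$-module is an injective right $RH$-module: this holds because ${\rm Res}_H^G$ is right adjoint to the exact induction functor ${\rm Ind}_H^G$ (equivalently, $RG$ is projective — indeed free — as a left $RH$-module), so restriction preserves injectives. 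Given that ${\rm Res}_H^GE$ is an injective right $RH$-module and $\mathbf{F}$ is a totally acyclic complex of flat $RH$-modules, the complex $({\rm Res}_H^GE)\otimes_{RH}\mathbf{F}$ is acyclic by the very definition of total acyclicity.

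Putting these together, $RG\otimes_{RH}\mathbf{F}$ is a totally acyclic complex of flat $RG$-modules with ${\rm Ind}_H^GM$ appearing as the cycle module in degree $0$, whence ${\rm Ind}_H^GM$ is Gorenstein flat over $RG$. I do not expect to need the hypothesis ${\rm sfli}R<\infty$ anywhere in this particular lemma, since the argument is a direct manipulation of totally acyclic complexes under a well-behaved change of rings; the closure-under-extensions result of \cite{SS20} is likewise not needed here (it would only be needed if one instead argued via Gorenstein flat \emph{dimensions} rather than the Gorenstein flat modules themselves). The one point to state carefully is that ${\rm Ind}_H^G$ genuinely sends flat $RH$-modules to flat $RG$-modules and is exact, both of which follow from $RG$ being free as a right $RH$-module.
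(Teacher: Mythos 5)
Your argument is correct and follows essentially the same route as the paper's proof: apply the exact, flatness-preserving functor ${\rm Ind}_H^G=RG\otimes_{RH}-$ to a totally acyclic complex of flat $RH$-modules and verify total acyclicity over $RG$ via the isomorphism $E\otimes_{RG}(RG\otimes_{RH}\mathbf{F})\cong ({\rm Res}_H^GE)\otimes_{RH}\mathbf{F}$ together with the fact that restriction preserves injectivity. Your extra justifications (why restriction preserves injectives, why ${\rm sfli}R<\infty$ is not needed) are accurate but only make explicit what the paper leaves implicit.
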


\begin{proof}
Let $M$ be a Gorenstein flat $RH$-module. There is a totally acyclic complex of flat left $RH$-modules
$$\mathbf{F} = \cdots\longrightarrow F_1\longrightarrow F_0\longrightarrow F_{-1}\longrightarrow\cdots$$
such that $M = {\rm Ker}(F_0\rightarrow F_{-1})$. We infer that
${\rm Ind}_H^G\mathbf{F} = RG\otimes_{RH}\mathbf{F}$ is an acyclic complex of flat $RG$-modules such that ${\rm Ind}_H^GM = {\rm Ker}({\rm Ind}_H^GF_0\rightarrow {\rm Ind}_H^GF_{-1})$. For any injective right $RG$-module $E$, it is restricted to be an injective $RH$-module. Then, we infer from the isomorphism
$$E\otimes_{RG}{\rm Ind}_H^G\mathbf{F} = E\otimes_{RG}RG\otimes_{RH}\mathbf{F}\cong E\otimes_{RH}\mathbf{F}$$
that $E\otimes_{RG}{\rm Ind}_H^G\mathbf{F}$ is acyclic. Hence, ${\rm Ind}_H^G\mathbf{F}$ is a totally acyclic complex of flat $RG$-modules, and this implies that ${\rm Ind}_H^GM$ is a Gorenstein flat $RG$-module.
\end{proof}

\begin{lem}\label{lem:IndGF}
Let $M$ be an $RG$-module. If $M$ is Gorenstein flat as an $R$-module, then for any flat $RG$-module $F$, the induced left $RG$-module $F\otimes_{R}M$ is also Gorenstein flat.
\end{lem}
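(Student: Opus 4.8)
The plan is to manufacture, out of a totally acyclic complex of flat $R$-modules witnessing that $M$ is Gorenstein flat over $R$, a totally acyclic complex of flat $RG$-modules having $F\otimes_R M$ (with its \emph{induced} $RG$-structure) as a cycle. Note first that the $G$-action on $M$ is irrelevant to the construction of the induced module $F\otimes_R M$, so throughout we may treat $M$ simply as an $R$-module.

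First I would fix a totally acyclic complex of flat $R$-modules $\mathbf P=\cdots\to P_1\to P_0\to P_{-1}\to\cdots$ with $M\cong{\rm Ker}(P_0\to P_{-1})$ and apply $F\otimes_R-$. Since every flat $RG$-module is flat over $R$ (as $RG$ is $R$-free), the functor $F\otimes_R-$ is exact, so $F\otimes_R\mathbf P$ is again acyclic and $F\otimes_R M\cong{\rm Ker}(F\otimes_R P_0\to F\otimes_R P_{-1})$ as induced $RG$-modules. Next I would verify that each $F\otimes_R P_i$ is flat as an $RG$-module: for any short exact sequence of right $RG$-modules $0\to A\to B\to C\to 0$, the natural isomorphism $X\otimes_{RG}(F\otimes_R P_i)\cong(X\otimes_{RG}F)\otimes_R P_i$ (the same type of identity used in the proof of Lemma \ref{lem:RG-Gf}, with the conventions on induced actions from Lemma \ref{lem:RG-antisym}) identifies $-\otimes_{RG}(F\otimes_R P_i)$ applied to this sequence with $(-\otimes_{RG}F)\otimes_R P_i$, which is exact because $F$ is $RG$-flat and $P_i$ is $R$-flat. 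Hence $F\otimes_R\mathbf P$ is an acyclic complex of flat $RG$-modules, and $F\otimes_R M$ sits at the right spot.

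The core step is total acyclicity: for every injective right $RG$-module $E$, the complex $E\otimes_{RG}(F\otimes_R\mathbf P)\cong(E\otimes_{RG}F)\otimes_R\mathbf P$ must be shown acyclic. Here I would use two observations about $E\otimes_{RG}F$. First, the underlying $R$-module of $E$ is injective, since restriction from $RG$ to $R$ is right adjoint to the exact functor $RG\otimes_R-$. Second, by Lazard's theorem $F\cong\varinjlim_j RG^{n_j}$ with each $n_j$ finite, so $E\otimes_{RG}F\cong\varinjlim_j E^{n_j}$ is a direct limit of finite direct sums of copies of the injective $R$-module $E$, hence a direct limit of injective $R$-modules. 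Therefore $(E\otimes_{RG}F)\otimes_R\mathbf P\cong\varinjlim_j(E^{n_j}\otimes_R\mathbf P)$; each $E^{n_j}\otimes_R\mathbf P$ is acyclic because $\mathbf P$ is totally acyclic over $R$ and $E^{n_j}$ is an injective $R$-module, and direct limits are exact, so the colimit complex is acyclic too. Assembling this, $F\otimes_R\mathbf P$ is a totally acyclic complex of flat $RG$-modules with $F\otimes_R M$ as a cycle module, which gives that $F\otimes_R M$ is Gorenstein flat over $RG$.

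I expect the total acyclicity step to be the main obstacle. The $R$-module $E\otimes_{RG}F$ is in general neither flat nor injective, so one cannot simply invoke the defining property of $\mathbf P$; the key is to exploit the Lazard presentation of $F$ together with the (easy, but not entirely obvious) fact that restriction to $R$ preserves injectivity, in order to rewrite $E\otimes_{RG}F$ as a direct limit of genuine injective $R$-modules, which then commutes past $-\otimes_R\mathbf P$. A secondary point requiring care is the naturality, in all module variables, of the associativity isomorphism $E\otimes_{RG}(F\otimes_R-)\cong(E\otimes_{RG}F)\otimes_R-$ used above, and keeping track of the fact that the $RG$-structure on $F\otimes_R(-)$ is always the induced one. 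As a consistency check, taking $F=RG$ recovers the $H=1$ case of Lemma \ref{lem:GF-H2G}: $RG\otimes_R M$ is Gorenstein flat over $RG$ whenever $M$ is Gorenstein flat over $R$.
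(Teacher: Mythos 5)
Your proof is correct, but it takes a genuinely different route from the paper's. The paper argues at the level of modules rather than complexes: since $F$ restricts to a flat $R$-module, it is a directed colimit of free $R$-modules, so $F\otimes_R M$ is Gorenstein flat over $R$ by closure of Gorenstein flat modules under direct limits (\cite[Corollary 4.12]{SS20}, \cite[Lemma 3.1]{YL12}); then Lemma \ref{lem:GF-H2G} with $H=\{1\}$ shows that ${\rm Ind}_{\{1\}}^G(F\otimes_R M)=RG\otimes_R(F\otimes_R M)$ is Gorenstein flat over $RG$; finally the tensor identity $F\otimes_R{\rm Ind}_{\{1\}}^GM\cong{\rm Ind}_{\{1\}}^G(F\otimes_R M)$ of \cite[Section III.5]{Bro82} (whose diagonal structure agrees with the induced one here) together with the fact that $M$ is an $R$-direct summand of ${\rm Ind}_{\{1\}}^GM$ exhibits the induced module $F\otimes_R M$ as a direct summand of a Gorenstein flat $RG$-module, and closure under direct summands finishes the argument. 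You instead work straight from the definition: you transport a totally acyclic complex of flat $R$-modules witnessing the hypothesis on $M$ through $F\otimes_R-$, check flatness of the terms over $RG$ via the associativity isomorphism $X\otimes_{RG}(F\otimes_R P_i)\cong(X\otimes_{RG}F)\otimes_R P_i$, and obtain total acyclicity by writing $E\otimes_{RG}F$ as a filtered colimit of injective $R$-modules (Lazard over $RG$, plus the fact that restriction of scalars has an exact left adjoint and hence preserves injectivity). Your route is more self-contained: it needs neither the closure of Gorenstein flat $R$-modules under direct limits and direct summands, nor Lemma \ref{lem:GF-H2G}, nor the tensor identity from Brown; the price is the bimodule bookkeeping and the naturality of the associativity isomorphisms, which you correctly identify as the delicate points and handle adequately. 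The paper's route, by contrast, is shorter because it recycles lemmas already established and reused elsewhere in the paper. Both arguments ultimately invoke Lazard's theorem, yours for $F$ over $RG$ and the paper's for $F$ over $R$.
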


\begin{proof}
Let $H = \{1\}$ be the subgroup formed by the identity element of $G$. Then, the induction functor $\mathrm{Ind}_H^G = RG\otimes_{R}-$ sends every $R$-module to be an $RG$-module. Let $M$ be an $RG$-module, and assume that $M$ is Gorenstein flat as an $R$-module. Since any flat $RG$-module $F$ is also restricted to be a flat $R$-module, we may express it as a colimit of free $R$-modules. Hence, it follows that $F\otimes_{R}M$ is a Gorenstein flat $R$-module, as the subcategory of Gorenstein flat $R$-modules is closed under direct limits; see \cite[Corollary 4.12]{SS20} and \cite[Lemma 3.1]{YL12}. Then, by Lemma \ref{lem:GF-H2G} we infer that the induced $RG$-module ${\rm Ind}_H^G(F\otimes_{R}M)$ is Gorenstein flat.

There is an isomorphism of $RG$-modules $$F\otimes_R{\rm Ind}_H^GM \cong {\rm Ind}_H^G(F\otimes_RM),$$
where $G$ acts diagonally on the left tensor product; see for example \cite[Section III 5]{Bro82}. Note that the diagonal $RG$-module structure of $F\otimes_R{\rm Ind}_H^GM$ coincides with the one induced by $_{RG}F\otimes_{R}-$. As $R$-modules, $M$ is a direct summand of ${\rm Ind}_H^GM$. Then, the induced $RG$-module $F\otimes_RM$ is a direct summand of the induced $RG$-module $F\otimes_R{\rm Ind}_H^GM$. Hence, $F\otimes_{R}M$ is a Gorenstein flat $RG$-module.
\end{proof}

\begin{prop}\label{prop:Gfd<fd}
Let $M$ be any left $RG$-module which is Gorenstein flat as an $R$-module. If there exists an $R$-pure monomorphism of $RG$-modules $\iota: R\rightarrow A$, where $A$ is $R$-flat with $\mathrm{fd}_{RG}A < \infty$, then $\mathrm{Gfd}_{RG}M \leq \mathrm{fd}_{RG}A$.
\end{prop}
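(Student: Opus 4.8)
The plan is to induct on $n := \mathrm{fd}_{RG}A$. If $n = 0$, then $A$ is a flat $RG$-module, so by Lemma \ref{lem:IndGF} the induced left $RG$-module $A \otimes_R M \cong M \otimes_R A$ is Gorenstein flat; then Lemma \ref{lem:RG-Gf} applied to the $R$-pure monomorphism $\iota: R \to A$ gives that $M$ itself is Gorenstein flat, i.e. $\mathrm{Gfd}_{RG}M \leq 0$. For the inductive step, suppose $n \geq 1$ and the result holds for smaller values. Choose a short exact sequence of $RG$-modules $0 \to A' \to F \to A \to 0$ with $F$ flat; then $\mathrm{fd}_{RG}A' = n - 1$, and since $A$ and $F$ are both $R$-flat, this sequence is $R$-pure and $A'$ is $R$-flat as well. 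The idea is to combine $\iota$ with this resolution step to produce an $R$-pure $RG$-monomorphism $R \to A'$ (or a suitable variant) with $R$-flat target of flat dimension $n-1$, then feed it to a companion module built from $M$ so that the inductive hypothesis applies.

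More precisely, I would take a short exact sequence $0 \to M \to F_M \to M' \to 0$ of $RG$-modules with $F_M$ flat; restricting to $R$, since $M$ is Gorenstein flat over $R$ and $\mathrm{sfli}R < \infty$ is available (it is needed for Lemma \ref{lem:R-Gf} and the structure theory), one sees $M'$ is again Gorenstein flat as an $R$-module (using that $F_M$ is $R$-flat and the class of Gorenstein flat $R$-modules is closed under cokernels of monomorphisms from it, cf. \cite[Corollary 4.12]{SS20}, \cite[Proposition 3.8]{Hol04} in the now coherent-free form). Applying the inductive hypothesis to $M'$ with the same $\iota: R \to A$ would only give $\mathrm{Gfd}_{RG}M' \leq n$, which is not yet a gain; instead the resolution must be taken on the $A$-side. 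So the cleaner route: since $\mathrm{Gfd}_{RG}(M \otimes_R A)$ is what we really control, apply $M \otimes_R -$ to $0 \to A' \to F \to A \to 0$; because $A', F, A$ are $R$-flat this stays exact and $F \otimes_R M$ is flat over $RG$ (Lemma \ref{lem:IndGF}, using that $M$ is $R$-Gorenstein flat and $F$ is $RG$-flat), so $0 \to M \otimes_R A' \to M \otimes_R F \to M \otimes_R A \to 0$ exhibits $M \otimes_R A$ with a flat first syzygy $M \otimes_R A'$; iterating shows $\mathrm{Gfd}_{RG}(M \otimes_R A) \leq n$ is automatic but, more to the point, one gets an $R$-pure $RG$-monomorphism $R \to A'$ only after splicing $\iota$ with $A \to$ (cokernel), which is where care is needed.

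I expect the main obstacle to be packaging the inductive step so that the $R$-pure monomorphism $R \to A$ of flat dimension $n$ yields, in a single clean move, both (a) an $R$-pure $RG$-monomorphism with $R$-flat target of flat dimension $n-1$ and (b) a new module $M'$, Gorenstein flat over $R$, with $\mathrm{Gfd}_{RG}M \leq \mathrm{Gfd}_{RG}M' + 1$ — i.e. correctly matching the dimension bookkeeping on both sides so the two inductions are synchronized. The key technical tools in hand are Lemma \ref{lem:IndGF} (induced modules $F \otimes_R M$ are Gorenstein flat when $M$ is $R$-Gorenstein flat and $F$ is $RG$-flat), Lemma \ref{lem:RG-Gf} (descent of Gorenstein flatness along $N \mapsto N \otimes_R A$ across an $R$-pure $\iota$), Lemma \ref{lem:RG-antisym} ($M \otimes_R A \cong A \otimes_R M$ as induced modules), and the coherent-free closure properties from \cite{SS20}; assembling these in the right order, with $n = 0$ as the base case handled by Lemmas \ref{lem:IndGF} and \ref{lem:RG-Gf}, should close the argument.
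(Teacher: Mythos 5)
Your base case ($n=0$) is exactly right and uses the right lemmas, but the proposal as a whole has a genuine gap: the induction on $n=\mathrm{fd}_{RG}A$ is never closed, and the two devices you sketch for the inductive step do not work. First, from $0\to A'\to F\to A\to 0$ there is no reason to obtain an $R$-pure $RG$-monomorphism $R\to A'$: the map $\iota$ lands in $A$, and $A'$ is a kernel, so ``splicing $\iota$ with the cokernel'' produces nothing; the hypothesis of the proposition simply does not propagate to the syzygy of $A$. Second, the companion sequence $0\to M\to F_M\to M'\to 0$ with $F_M$ a flat $RG$-module need not exist at all: $M$ is only assumed Gorenstein flat over $R$, not over $RG$ (that is what is being proved), and an arbitrary $RG$-module does not embed in a flat one. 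You also invoke $\mathrm{sfli}\,R<\infty$, which is not a hypothesis of this proposition and is not needed.

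The missing idea --- and the paper's actual argument, which requires no induction --- is to apply the descent Lemma \ref{lem:RG-Gf} not to $M$ but to the $n$-th $RG$-syzygy of $M$. Concretely: tensoring a length-$n$ flat $RG$-resolution of $A$ (which is $R$-pure exact, all terms being $R$-flat) with the $R$-Gorenstein flat module $M$ and using Lemmas \ref{lem:RG-antisym} and \ref{lem:IndGF} gives $\mathrm{Gfd}_{RG}(M\otimes_R A)\le n$ --- this much you essentially have. Now take any flat $RG$-resolution of $M$ and let $K$ be its $n$-th syzygy; since $A$ is $R$-flat, applying $-\otimes_R A$ to the truncated resolution stays exact and exhibits $K\otimes_R A$ as an $n$-th syzygy of $M\otimes_R A$ in a flat $RG$-resolution, hence $K\otimes_R A$ is Gorenstein flat (by the standard characterization of Gorenstein flat dimension via syzygies, valid over any ring after \cite[Corollary 4.12]{SS20}). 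Lemma \ref{lem:RG-Gf}, whose hypothesis on $N$ is only that $N\otimes_R A$ be Gorenstein flat, then yields that $K$ is Gorenstein flat over $RG$, so $\mathrm{Gfd}_{RG}M\le n$. This single syzygy-shift is what synchronizes the two sides; once you see it, the induction you were trying to set up becomes unnecessary.
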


\begin{proof}
Let $\mathrm{fd}_{RG}A = n$. There exists an exact sequence of left $RG$-modules
$$0\longrightarrow F_n\longrightarrow \cdots \longrightarrow F_1\longrightarrow F_0\longrightarrow A\longrightarrow 0,$$
where each $F_i$ is flat. Since $A$ is a flat $R$-module and each $F_i$ is restricted to be a flat $R$-module, the sequence is $R$-pure exact.

By applying $-\otimes_{R}M$ to the above $R$-pure exact sequence, we get an exact sequence of induced left $RG$-modules
$$0\longrightarrow F_n\otimes_{R}M\longrightarrow \cdots\longrightarrow F_1\otimes_{R}M \longrightarrow F_0\otimes_{R}M \longrightarrow A\otimes_{R}M\longrightarrow 0.$$
Note that by Lemma \ref{lem:RG-antisym}, there is an isomorphism of induced left $RG$-modules $M\otimes_R A\cong A\otimes_R M$. It follows from Lemma \ref{lem:IndGF} that $F_{i}\otimes_RM$ are Gorenstein flat $RG$-modules, and then ${\rm Gfd}_{RG}(M\otimes_{R}A) = {\rm Gfd}_{RG}(A\otimes_{R}M)\leq n$.

Now we consider a flat resolution of the $RG$-module $M$
$$\cdots \longrightarrow F'_n\longrightarrow \cdots \longrightarrow F'_1\longrightarrow F'_0\longrightarrow M\longrightarrow 0,$$
and let $K = {\rm Ker}(F'_{n-1}\rightarrow F'_{n-2})$.
By applying $-\otimes_{R}A$, we get an exact sequence of induced left $RG$-modules
$$0 \longrightarrow K\otimes_{R}A \longrightarrow F'_{n-1}\otimes_{R}A\rightarrow \cdots \longrightarrow F'_0\otimes_{R}A\longrightarrow M\otimes_{R}A\longrightarrow 0,$$
where all $F'_i\otimes_{R}A$ are flat $RG$-modules. We infer from  ${\rm Gfd}_{RG}(M\otimes_{R}A)\leq n$ that $K\otimes_{R}A$ is a Gorenstein flat left $RG$-module. Hence, it follows from Lemma \ref{lem:RG-Gf} that $K$ is a Gorenstein flat $RG$-module, and then the desired inequality $\mathrm{Gfd}_{RG}M \leq n$ holds.
\end{proof}

\begin{cor}\label{cor:Gfd}
If there exists an $R$-pure monomorphism of $RG$-modules $\iota: R\rightarrow A$, where $A$ is $R$-flat with $\mathrm{fd}_{RG}A < \infty$, then for any $RG$-module $M$, we have
$$\mathrm{Gfd}_{RG}M \leq \mathrm{fd}_{RG}A + {\rm Gfd}_RM \leq \mathrm{fd}_{RG}A + {\rm sfli}R.$$
\end{cor}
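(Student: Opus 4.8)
The plan is to deduce this from Proposition~\ref{prop:Gfd<fd} by a dimension-shifting induction. I will treat the two displayed inequalities separately, and may assume throughout that ${\rm sfli}R<\infty$ and that $m:={\rm Gfd}_RM<\infty$, since otherwise one of the two bounds is $\infty$ and there is nothing to prove.

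\emph{The second inequality ${\rm Gfd}_RM\leq{\rm sfli}R$.} As recalled in Section~2, the finiteness of ${\rm sfli}R$ forces $R$ to have finite Gorenstein weak global dimension with ${\rm G.wgldim}R={\rm sfli}R$ (by \cite[Theorem 5.3]{Emm12} together with \cite[Corollary 1.5]{CET21}); hence ${\rm Gfd}_RM\leq{\rm G.wgldim}R={\rm sfli}R$ for every $RG$-module $M$, and in particular $m<\infty$.

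\emph{The first inequality ${\rm Gfd}_{RG}M\leq{\rm fd}_{RG}A+m$.} I would induct on $m$. If $m=0$, then $M$ is Gorenstein flat as an $R$-module, so Proposition~\ref{prop:Gfd<fd} applies verbatim (its hypothesis ${\rm fd}_{RG}A<\infty$ is part of the present assumptions) and gives ${\rm Gfd}_{RG}M\leq{\rm fd}_{RG}A$. If $m\geq 1$, choose a short exact sequence of $RG$-modules $0\to K\to F\to M\to 0$ with $F$ a free $RG$-module. Restricting scalars, $F$ becomes a free, hence Gorenstein flat, $R$-module, so by the characterization of Gorenstein flat dimension via flat resolutions (\cite[Theorem 3.14]{Hol04}, valid over arbitrary rings thanks to \cite[Corollary 4.12]{SS20}) one has ${\rm Gfd}_RK=m-1$. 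The induction hypothesis yields ${\rm Gfd}_{RG}K\leq{\rm fd}_{RG}A+(m-1)$; splicing a Gorenstein flat $RG$-resolution of $K$ of that length with $0\to K\to F\to M\to 0$, in which $F$ is flat hence Gorenstein flat over $RG$, produces a Gorenstein flat $RG$-resolution of $M$ of length ${\rm fd}_{RG}A+m$, as desired.

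Essentially all of the content is already in Proposition~\ref{prop:Gfd<fd}; the corollary itself is a formal consequence. The only point requiring a word of care is the usual good behaviour of Gorenstein flat dimension along short exact sequences over the possibly non-coherent rings $R$ and $RG$ — in particular the identity ${\rm Gfd}_RK=m-1$ for the syzygy $K$ — which is exactly the kind of statement the authors invoke earlier in the paper (e.g.\ \cite[Theorem 3.23]{Hol04} in Proposition~\ref{prop:PureMonic}) on the strength of \cite[Corollary 4.12]{SS20}. I therefore do not anticipate any genuine obstacle beyond routine bookkeeping.
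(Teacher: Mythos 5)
Your argument is correct and coincides with the paper's own proof: the second inequality via ${\rm G.wgldim}R={\rm sfli}R$, and the first by induction on $m={\rm Gfd}_RM$, with the base case $m=0$ given by Proposition \ref{prop:Gfd<fd} and the inductive step by dimension-shifting along a short exact sequence $0\to K\to F\to M\to 0$ with $F$ flat (free) over $RG$, so that ${\rm Gfd}_RK=m-1$. Your extra remarks on splicing the resolution and on the coherence-free validity of the dimension-shift are exactly the routine details the paper leaves implicit.
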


\begin{proof}
Let $\mathrm{fd}_{RG}A = n$. The second inequality is obvious since ${\rm G.wgldim}R ={\rm sfli}R$; see \cite[Theorem 5.3]{Emm12} and \cite[Corollary 1.5]{CET21}.

For the first inequality, it suffices to assume that ${\rm Gfd}_RM = m$ is finite. In this case, we may prove the result by induction on $m$. If $m=0$, that is, $M$ is a Gorenstein flat $R$-module, then the assertion is immediate from Proposition \ref{prop:Gfd<fd}.

We now assume $m > 0$, and consider a short exact sequence of $RG$-modules
$$0\longrightarrow K\longrightarrow F\longrightarrow M\longrightarrow 0,$$
where $F$ is flat. Since $F$ is restricted to be a flat $R$-module, we have ${\rm Gfd}_{R}K = m - 1$. Invoking the induction hypothesis, we may conclude that ${\rm Gfd}_{RG}K \leq n + (m-1)$, and hence ${\rm Gfd}_{RG}M\leq n + m$. This completes the proof.
\end{proof}

The following is immediate from Corollary \ref{cor:PureMonic} and \ref{cor:Gfd}.

\begin{cor}\label{cor:Gfd<+}
Let $R$ be a commutative ring with ${\rm sfli}R < \infty$. Then ${\rm Ghd}_RG<\infty$ if and only if ${\rm Gfd}_{RG}M<\infty$ for any $RG$-module $M$. In this case, one has
$$\mathrm{Gfd}_{RG}M \leq {\rm Ghd}_RG + {\rm Gfd}_RM.$$
\end{cor}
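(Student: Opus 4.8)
The plan is to derive Corollary \ref{cor:Gfd<+} directly from the two results it cites, namely Corollary \ref{cor:PureMonic} and Corollary \ref{cor:Gfd}, splitting the biconditional into its two implications and then establishing the quantitative bound.

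\textbf{The ``only if'' direction.} Assume ${\rm Ghd}_RG < \infty$. Since ${\rm sfli}R < \infty$ by hypothesis, Corollary \ref{cor:PureMonic} applies and produces an $R$-pure $RG$-exact sequence $0\rightarrow R\rightarrow A$ with $A$ an $R$-flat $RG$-module and ${\rm fd}_{RG}A = {\rm Ghd}_RG < \infty$. In particular the $R$-pure monomorphism $\iota\colon R\rightarrow A$ required by Corollary \ref{cor:Gfd} exists, with ${\rm fd}_{RG}A$ finite. Hence Corollary \ref{cor:Gfd} gives, for every $RG$-module $M$, the inequality ${\rm Gfd}_{RG}M \leq {\rm fd}_{RG}A + {\rm Gfd}_RM = {\rm Ghd}_RG + {\rm Gfd}_RM$; the right-hand side is finite because ${\rm Gfd}_RM \leq {\rm G.wgldim}R = {\rm sfli}R < \infty$. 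This simultaneously shows ${\rm Gfd}_{RG}M < \infty$ for all $M$ and establishes the asserted bound, which is exactly the ``in this case'' part of the statement.

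\textbf{The ``if'' direction.} This is the easy direction: if every $RG$-module has finite Gorenstein flat dimension, then in particular the trivial $RG$-module $R$ does, so ${\rm Ghd}_RG = {\rm Gfd}_{RG}R < \infty$ by the very definition of ${\rm Ghd}_RG$ recalled in the Preliminaries. No further work is needed here.

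I do not anticipate any real obstacle, since the corollary is essentially the conjunction of the two preceding corollaries once one observes that ${\rm sfli}R < \infty$ is precisely what makes ${\rm Gfd}_RM$ finite for every $R$-module $M$ (via \cite[Theorem 5.3]{Emm12} together with ${\rm G.wgldim}R = {\rm sfli}R$). The only point that deserves a word of care is making sure the bound ${\rm Gfd}_{RG}M \leq {\rm Ghd}_RG + {\rm Gfd}_RM$ is stated with ${\rm Gfd}_RM$ rather than the cruder ${\rm sfli}R$; this is automatic from the first inequality in Corollary \ref{cor:Gfd} and does not require revisiting its proof.
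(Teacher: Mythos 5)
Your proposal is correct and follows exactly the route the paper intends: the paper states the corollary is immediate from Corollary \ref{cor:PureMonic} and Corollary \ref{cor:Gfd}, and your argument simply spells out that combination (Corollary \ref{cor:PureMonic} supplies the pure monomorphism $R\rightarrow A$ with ${\rm fd}_{RG}A={\rm Ghd}_RG$, Corollary \ref{cor:Gfd} then yields the bound, with finiteness of ${\rm Gfd}_RM$ coming from ${\rm G.wgldim}R={\rm sfli}R<\infty$), plus the trivial converse. No gaps.
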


Now, we are in a position to state the main result of this section.

\begin{thm}\label{thm:fGhd}
Let $G$ be a group, $R$ a commutative ring with ${\rm sfli}R < \infty$. The following are equivalent:
\begin{enumerate}
\item ${\rm Ghd}_{R}G$ is finite.
\item There exists an $R$-pure $RG$-exact sequence $0\rightarrow R\rightarrow A$, where $A$ is an $R$-flat $RG$-module of finite flat dimension.
\item Any $RG$-module has finite Gorenstein flat dimension.
\item ${\rm sfli}RG$ is finite.
\end{enumerate}
In this case, there is an equality ${\rm Ghd}_RG = {\rm fd}_{RG}A$, and moreover, the following inequalities hold:
$${\rm sfli}R \leq {\rm sfli}RG = {\rm G.wgldim}RG \leq {\rm Ghd}_RG + {\rm sfli}R.$$
\end{thm}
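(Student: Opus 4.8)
The plan is to establish the cyclic implications $(1)\Rightarrow(2)\Rightarrow(3)\Rightarrow(1)$ together with $(3)\Leftrightarrow(4)$, and then to extract the displayed equality and the chain of inequalities from the steps carried out along the way. Since the substantive homological work has already been packaged into the preceding results, the argument is mainly a matter of assembling them in the right order.

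For $(1)\Rightarrow(2)$ I would invoke Corollary~\ref{cor:PureMonic} directly: when ${\rm Ghd}_RG<\infty$ it produces an $R$-pure $RG$-exact sequence $0\to R\to A$ with $A$ being $R$-flat and, moreover, ${\rm fd}_{RG}A={\rm Ghd}_RG$, which already records one half of the claimed equality. For $(2)\Rightarrow(3)$ I would feed the sequence from (2) into Corollary~\ref{cor:Gfd}: for every $RG$-module $M$ it gives ${\rm Gfd}_{RG}M\le {\rm fd}_{RG}A+{\rm sfli}R<\infty$. The implication $(3)\Rightarrow(1)$ is trivial, since $R$ is itself an $RG$-module and ${\rm Ghd}_RG={\rm Gfd}_{RG}R$. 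Finally, $(3)\Leftrightarrow(4)$ is precisely the assertion that every $RG$-module has finite Gorenstein flat dimension if and only if ${\rm sfli}RG<\infty$, which is \cite[Theorem~5.3]{Emm12} applied to the ring $RG$; the same circle of results (\cite[Theorem~5.3]{Emm12} and \cite[Corollary~1.5]{CET21}) also supplies, once we are in this situation, the middle equality ${\rm sfli}RG={\rm G.wgldim}RG$.

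It then remains to verify the displayed line. The inequality ${\rm G.wgldim}RG\le{\rm Ghd}_RG+{\rm sfli}R$ is obtained by taking the supremum over all $RG$-modules $M$ in the estimate ${\rm Gfd}_{RG}M\le{\rm Ghd}_RG+{\rm Gfd}_RM\le{\rm Ghd}_RG+{\rm sfli}R$ of Corollary~\ref{cor:Gfd<+}, using ${\rm Gfd}_RM\le{\rm G.wgldim}R={\rm sfli}R$. For the equality ${\rm Ghd}_RG={\rm fd}_{RG}A$ I would combine two facts: the $A$ supplied by Corollary~\ref{cor:PureMonic} already satisfies ${\rm fd}_{RG}A={\rm Ghd}_RG$, while Proposition~\ref{prop:Gfd<fd} applied to the trivial module $M=R$ (which is $R$-flat, hence Gorenstein flat over $R$) shows ${\rm Ghd}_RG={\rm Gfd}_{RG}R\le{\rm fd}_{RG}A$ for every $A$ as in (2); thus no admissible $A$ can have smaller $RG$-flat dimension.

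The one point not immediately inherited from earlier results is ${\rm sfli}R\le{\rm sfli}RG$, which I expect to be the main (if modest) obstacle. Here I would argue directly: for an injective $R$-module $I$, the coinduced module $J={\rm Coind}_1^GI=\mathrm{Hom}_R(RG,I)$ is an injective $RG$-module, since ${\rm Coind}_1^G$ is right adjoint to the exact functor ${\rm Res}_1^G$. Because $RG$ is free as an $R$-module, ${\rm Res}_1^GJ$ decomposes as $\prod_{g\in G}I$ with componentwise $R$-action, so $I$ is an $R$-direct summand of ${\rm Res}_1^GJ$ (the coordinate at the identity). Restricting a flat $RG$-resolution of $J$ of length ${\rm fd}_{RG}J\le{\rm sfli}RG$ yields a flat $R$-resolution of ${\rm Res}_1^GJ$ of the same length, because flat $RG$-modules are $R$-flat ($RG$ being $R$-free); hence ${\rm fd}_RI\le{\rm fd}_R({\rm Res}_1^GJ)\le{\rm sfli}RG$, and taking the supremum over injective $R$-modules $I$ gives ${\rm sfli}R\le{\rm sfli}RG$. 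Assembling these pieces completes the proof.
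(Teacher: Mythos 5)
Your proposal is correct and follows essentially the same route as the paper: the cycle $(1)\Rightarrow(2)\Rightarrow(3)\Rightarrow(1)$ via Corollaries \ref{cor:PureMonic} and \ref{cor:Gfd}, the equivalence $(3)\Leftrightarrow(4)$ via \cite[Theorem 5.3]{Emm12}, the bound ${\rm G.wgldim}RG\leq{\rm Ghd}_RG+{\rm sfli}R$ via Corollary \ref{cor:Gfd<+}, and the inequality ${\rm sfli}R\leq{\rm sfli}RG$ by coinduction from the trivial subgroup, exactly as in the paper. Your extra remark that Proposition \ref{prop:Gfd<fd} forces ${\rm Ghd}_RG\leq{\rm fd}_{RG}A$ for \emph{every} admissible $A$ in (2) is a harmless (and correct) refinement of the stated equality.
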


\begin{proof}
The implication (1)$\Longrightarrow$(2) follows by Corollary \ref{cor:PureMonic}, and the implication (2)$\Longrightarrow$(3) follows from Corollary \ref{cor:Gfd}. The implication (3)$\Longrightarrow$(1) is trivial, and (3)$\Longleftrightarrow$(4) follows from \cite[Theorem 5.3]{Emm12}.

For the first inequality ${\rm sfli}R \leq {\rm sfli}RG$, it suffices to assume that ${\rm sfli}RG = n$ is finite. We consider any injective $R$-module $I$. For the subgroup $H = \{1\}$ of $G$, note that ${\rm Coind}_H^GI$ is an injective $RG$-module. Then ${\rm fd}_{RG}{\rm Coind}_H^GI \leq n$, which yields ${\rm fd}_R{\rm Coind}_H^GI \leq n$. Moreover, as $R$-modules $I$ is a direct summand of ${\rm Coind}_H^GI$, and hence ${\rm fd}_RI \leq n$. This implies that ${\rm sfli}R\leq n$.

By \cite[Theorem 5.3]{Emm12} we have ${\rm sfli}R = {\rm G.wgldim}R$ and ${\rm sfli}RG = {\rm G.wgldim}RG$.
Then, the inequality ${\rm G.wgldim}RG \leq {\rm Ghd}_RG + {\rm G.wgldim}R$ follows immediately from Corollary \ref{cor:Gfd<+}.
\end{proof}

\section{Properties of Gorenstein homological dimension of groups}

Using the above characterization on the finiteness of ${\rm Ghd}_RG$, we obtain the following properties of the Gorenstein homological dimensions of groups.

\begin{prop}\label{prop:GhdH<G}
Let $H$ be any subgroup of $G$. If ${\rm sfli}R < \infty$, then ${\rm Ghd}_RH \leq {\rm Ghd}_RG$.
\end{prop}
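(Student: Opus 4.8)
The plan is to dispose of the trivial case ${\rm Ghd}_RG=\infty$ (where the inequality is vacuous) and then, assuming ${\rm Ghd}_RG<\infty$, to exploit the $R$-pure monomorphism $R\to A$ furnished by Corollary \ref{cor:PureMonic}, restricting all the data along the inclusion $RH\hookrightarrow RG$.

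First I would apply Corollary \ref{cor:PureMonic}: since ${\rm sfli}R<\infty$ and ${\rm Ghd}_RG<\infty$, there is an $R$-pure $RG$-exact sequence $0\to R\to A$ with $A$ an $R$-flat $RG$-module and ${\rm fd}_{RG}A={\rm Ghd}_RG$. Now restrict along $RH\hookrightarrow RG$. The underlying $R$-module of ${\rm Res}_H^GA$ is unchanged, so ${\rm Res}_H^GA$ is still $R$-flat, and the trivial $RG$-module $R$ restricts to the trivial $RH$-module $R$; the map $R\to {\rm Res}_H^GA$ is $RH$-linear, and since $R$-purity concerns only the underlying $R$-module structure, the sequence $0\to R\to {\rm Res}_H^GA$ remains $R$-pure.

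The one genuine point to verify is that ${\rm fd}_{RH}({\rm Res}_H^GA)<\infty$; more precisely I would show ${\rm fd}_{RH}({\rm Res}_H^GA)\le {\rm fd}_{RG}A$. This rests on the standard fact that the restriction of a flat $RG$-module to $RH$ is flat over $RH$: picking coset representatives, $RG$ is free, hence flat, as a left $RH$-module, so for any right $RH$-module $N$ and any flat $RG$-module $F$ one has $N\otimes_{RH}{\rm Res}_H^GF\cong (N\otimes_{RH}RG)\otimes_{RG}F$, and exactness in $N$ follows by composing the exact functor $-\otimes_{RH}RG$ with $-\otimes_{RG}F$. Restricting an $RG$-flat resolution of $A$ of length ${\rm fd}_{RG}A$ therefore yields an $RH$-flat resolution of the same length.

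Finally I would apply Proposition \ref{prop:Gfd<fd} over the ring $RH$, taking $M$ to be the trivial $RH$-module $R$: it is $R$-flat, hence Gorenstein flat as an $R$-module, and we have exhibited an $R$-pure monomorphism of $RH$-modules $R\to {\rm Res}_H^GA$ with ${\rm Res}_H^GA$ being $R$-flat of finite $RH$-flat dimension. The proposition then gives ${\rm Gfd}_{RH}R\le {\rm fd}_{RH}({\rm Res}_H^GA)\le {\rm fd}_{RG}A={\rm Ghd}_RG$, and since ${\rm Ghd}_RH={\rm Gfd}_{RH}R$ by definition, this is exactly the assertion. I do not expect a serious obstacle; the only thing not to overlook is that Corollary \ref{cor:PureMonic} and Proposition \ref{prop:Gfd<fd} are applicable over $RH$ precisely because the hypothesis ${\rm sfli}R<\infty$ is a condition on the coefficient ring $R$ alone, shared by $RG$ and $RH$ alike.
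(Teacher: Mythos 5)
Your proposal is correct and follows essentially the same route as the paper: restrict the $R$-pure $RG$-monomorphism $R\to A$ from Corollary \ref{cor:PureMonic} to $RH$, note that flat $RG$-modules restrict to flat $RH$-modules so that ${\rm fd}_{RH}A\le{\rm fd}_{RG}A$, and conclude via Proposition \ref{prop:Gfd<fd} applied over $RH$. The only difference is that you spell out the freeness of $RG$ over $RH$ behind the restriction-of-flatness step, which the paper takes as standard.
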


\begin{proof}
There is nothing to prove if ${\rm Ghd}_RG = \infty$ and hence we may assume that ${\rm Ghd}_RG = n$ is finite. It follows from Corollary \ref{cor:PureMonic} that there exists an $R$-pure $RG$-exact sequence $0\rightarrow R\rightarrow A$ for which $A$ is $R$-flat such that ${\rm fd}_{RG}A = n$. Note that every flat $RG$-module is restricted to be a flat $RH$-module, and $0\rightarrow R\rightarrow A$ is also an exact sequence of $RH$-modules. We infer that ${\rm fd}_{RH}A \leq {\rm fd}_{RG}A$, and moreover, for the trivial $RH$-module $R$, by Proposition \ref{prop:Gfd<fd} we have ${\rm Ghd}_RH = {\rm Gfd}_{RH}R \leq {\rm fd}_{RH}A \leq n$. This completes the proof.
\end{proof}

The following generalizes \cite[Proposition 4.11]{ABHS11}: let $H$ be a subgroup of $G$ of finite index, then ${\rm Ghd}_\mathbb{Z}H \leq {\rm Ghd}_\mathbb{Z}G$. We remove the assumption of finite index therein.

\begin{cor}\label{cor:GhdH<G}
Let $H$ be any subgroup of $G$. Then ${\rm Ghd}_\mathbb{Z}H \leq {\rm Ghd}_\mathbb{Z}G$, and  ${\rm Ghd}_\mathbb{Q}H \leq {\rm Ghd}_\mathbb{Q}G$.
\end{cor}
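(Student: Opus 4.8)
The plan is to deduce this immediately from Proposition \ref{prop:GhdH<G}, whose only hypothesis on the coefficient ring is ${\rm sfli}R < \infty$. So the entire content of the corollary reduces to checking that $\mathbb{Z}$ and $\mathbb{Q}$ both have finite ``sfli'', after which one applies the proposition verbatim.

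For $R = \mathbb{Q}$ this is trivial: $\mathbb{Q}$ is a field, so every $\mathbb{Q}$-module is flat (indeed free), and in particular every injective $\mathbb{Q}$-module is flat; hence ${\rm sfli}\mathbb{Q} = 0$. For $R = \mathbb{Z}$ one uses that $\mathbb{Z}$ is a principal ideal domain, hence of weak global dimension $1$; consequently every $\mathbb{Z}$-module, in particular every injective $\mathbb{Z}$-module, has flat dimension at most $1$, so ${\rm sfli}\mathbb{Z} \leq 1 < \infty$. (Equivalently, the injective $\mathbb{Z}$-modules are the divisible abelian groups, and over a Dedekind domain these have projective, hence flat, dimension at most $1$.)

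Having verified the hypothesis, I would simply invoke Proposition \ref{prop:GhdH<G} twice, once with $R = \mathbb{Z}$ and once with $R = \mathbb{Q}$, to obtain ${\rm Ghd}_\mathbb{Z}H \leq {\rm Ghd}_\mathbb{Z}G$ and ${\rm Ghd}_\mathbb{Q}H \leq {\rm Ghd}_\mathbb{Q}G$ respectively. There is no genuine obstacle here; the only point worth a sentence is the elementary computation of ${\rm sfli}$ for these two rings. One might also add the remark — already anticipated in the introduction preceding Theorem \ref{thm:fGhd} — that $\mathbb{Z}$ and $\mathbb{Q}$ are precisely the sort of ``nice'' coefficient rings of finite Gorenstein weak global dimension to which the machinery of Section 3, and hence Proposition \ref{prop:GhdH<G}, applies.
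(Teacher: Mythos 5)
Your proposal is correct and is exactly the argument the paper intends: the corollary is stated as an immediate consequence of Proposition \ref{prop:GhdH<G}, the only point being that ${\rm sfli}\mathbb{Q}=0$ and ${\rm sfli}\mathbb{Z}\leq 1$ are finite, which you verify correctly. Nothing further is needed.
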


\begin{prop}\label{popo:GhdExt}
Let $1\rightarrow H\rightarrow G\rightarrow L\rightarrow 1$ be an extension of groups. If ${\rm sfli}R<\infty$, then
${\rm Ghd}_RG \leq {\rm Ghd}_RH + {\rm Ghd}_RL.$
\end{prop}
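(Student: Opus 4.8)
The plan is to deduce the inequality from the machinery of Section 3, used over $RH$, $RL$ and $RG$ simultaneously. If ${\rm Ghd}_RH=\infty$ or ${\rm Ghd}_RL=\infty$ there is nothing to prove, so assume ${\rm Ghd}_RH=m<\infty$ and ${\rm Ghd}_RL=n<\infty$; by Theorem \ref{thm:fGhd} this gives ${\rm sfli}RH<\infty$ and ${\rm sfli}RL<\infty$, so Gorenstein flat dimensions over $RH$ and $RL$ are finite and well-behaved. By Corollary \ref{cor:PureMonic} there is an $R$-pure $RH$-monomorphism $R\to A$ with $A$ being $R$-flat and ${\rm fd}_{RH}A=m$; equivalently, the trivial $RH$-module $R$ has a Gorenstein flat resolution of length $m$, and it is this $H$-side datum that gets induced up to $G$. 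Since $H$ is normal, inflation along $G\twoheadrightarrow L$ (restriction of scalars along $RG\to RL$) is exact; write $\widetilde N$ for the inflation of an $RL$-module $N$. As $\widetilde R$ is the trivial $RG$-module $R$ and ${\rm Gfd}_{RL}R={\rm Ghd}_RL$, the assertion is exactly the case $N=R$ of
\[{\rm Gfd}_{RG}\widetilde N\ \le\ {\rm Ghd}_RH+{\rm Gfd}_{RL}N\qquad\text{for every }RL\text{-module }N.\]

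To prove this inequality I would proceed in three steps. First, ${\rm Gfd}_{RG}(RL)\le m$: indeed $RL={\rm Ind}_H^G R$, so applying the exact functor ${\rm Ind}_H^G$ to a Gorenstein flat resolution of the trivial $RH$-module $R$ of length $m$ and invoking Lemma \ref{lem:GF-H2G} produces a Gorenstein flat resolution of $RL$ over $RG$ of length $m$. Second, if $F$ is a flat $RL$-module, then (Lazard) $F$ is a filtered colimit of finitely generated free $RL$-modules, hence $\widetilde F$ is a filtered colimit of finite direct sums of copies of $\widetilde{RL}$, so ${\rm Gfd}_{RG}\widetilde F\le m$ by closure of Gorenstein flat dimension under filtered colimits (combine closure of Gorenstein flat modules under direct limits, \cite[Corollary 4.12]{SS20} and \cite[Lemma 3.1]{YL12}, with a dimension shift along a functorial flat resolution). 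Third — granting for the moment the key claim that ${\rm Gfd}_{RG}\widetilde X\le m$ for every Gorenstein flat $RL$-module $X$ — given an arbitrary $RL$-module $N$ with ${\rm Gfd}_{RL}N=k<\infty$, Holm's exact sequences (\cite[Theorem 3.23]{Hol04}, exactly as used in Proposition \ref{prop:PureMonic}) together with a flat resolution yield an exact sequence $0\to G_{k-1}\to\cdots\to G_0\to X\to N\to 0$ of $RL$-modules with each $G_i$ flat and $X$ Gorenstein flat; inflating and combining ${\rm Gfd}_{RG}\widetilde{G_i}\le m$, ${\rm Gfd}_{RG}\widetilde X\le m$ with the standard bound on Gorenstein flat dimension along a finite exact sequence of length $k$ gives ${\rm Gfd}_{RG}\widetilde N\le m+k={\rm Ghd}_RH+{\rm Gfd}_{RL}N$. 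Taking $N=R$ yields ${\rm Ghd}_RG\le{\rm Ghd}_RH+{\rm Ghd}_RL$.

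The whole argument therefore reduces to the key claim, and this is where the main obstacle lies: inflation from $RL$ to $RG$ destroys flatness — for instance $RL={\rm Ind}_H^G R$ typically has infinite flat dimension over $RG$ — so one cannot bound ${\rm Gfd}_{RG}\widetilde X$ by simply inflating a flat resolution of $X$. The natural approach is to write $X=\mathrm{Ker}(F_0\to F_{-1})$ in a totally acyclic complex $\mathbf{F}$ of flat $RL$-modules, inflate it to an acyclic complex $\widetilde{\mathbf{F}}$ of $RG$-modules each of whose terms has Gorenstein flat dimension $\le m$ over $RG$ (by the flat case above), and then dimension-shift along $\widetilde{\mathbf{F}}$; the subtle point is that a single short exact sequence only yields ${\rm Gfd}_{RG}\widetilde X\le m+1$, and closing this "$+1$" leak requires genuinely using the two-sided acyclicity of $\widetilde{\mathbf{F}}$ — one route being to establish first that ${\rm sfli}RG<\infty$ (equivalently ${\rm Ghd}_RG<\infty$), after which all Gorenstein flat dimensions over $RG$ are finite and the leak stabilises to the bound $m$; this finiteness step itself can be extracted from the $R$-pure enlargements of the trivial module produced in Section 3, now over $RG$. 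One could alternatively organise the proof around the explicit $RG$-module obtained by inducing $R\to A$ up to $G$ and inflating a pure enlargement $R\to B$ from $L$, but the same delicate point reappears there, as the control of flat dimension under inflation.
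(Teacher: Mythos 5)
Your steps 1 and 2 (bounding ${\rm Gfd}_{RG}$ of $RL$ itself and of inflated flat $RL$-modules by $m={\rm Ghd}_RH$, via Lemma \ref{lem:GF-H2G}, Lazard and closure under direct limits) are exactly the paper's argument. But your overall reduction aims at the stronger inequality ${\rm Gfd}_{RG}\widetilde N\leq m+{\rm Gfd}_{RL}N$ for \emph{every} $RL$-module $N$, and this forces you to control ${\rm Gfd}_{RG}$ of an inflated \emph{Gorenstein flat} $RL$-module $X$ — your ``key claim'' — which you leave unproven. The repair you sketch (first establish ${\rm sfli}RG<\infty$, ``extracted from the $R$-pure enlargements of the trivial module produced in Section 3, now over $RG$'') is not carried out and is essentially circular: by Theorem \ref{thm:fGhd}, the finiteness of ${\rm sfli}RG$ is equivalent (given ${\rm sfli}R<\infty$) to ${\rm Ghd}_RG<\infty$, and producing an $R$-pure $RG$-embedding of $R$ into an $R$-flat module of finite flat dimension over $RG$ is precisely the point at issue. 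So as written the proposal has a genuine gap at its central step.

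The route you dismiss in your last sentence is in fact the paper's proof, and the ``delicate point'' does \emph{not} reappear there. The paper takes the $L$-side characteristic module: by Corollary \ref{cor:PureMonic} there is an $R$-pure $RL$-monomorphism $\alpha\colon R\to A$ with $A$ $R$-flat and ${\rm fd}_{RL}A=n$. Since $A$ has a finite \emph{flat} $RL$-resolution, only your steps 1--2 plus a standard dimension shift are needed to get ${\rm Gfd}_{RG}A\leq m+n$; no inflated Gorenstein flat $RL$-module ever enters. The key move you are missing is then to apply Proposition \ref{prop:PureMonic} \emph{over} $RG$ to $A$: since $A$ is $R$-flat (so ${\rm fd}_RA<\infty$) and ${\rm Gfd}_{RG}A\leq m+n<\infty$, there is an $R$-pure $RG$-monomorphism $\beta\colon A\to B$ with $B$ $R$-flat and ${\rm fd}_{RG}B={\rm Gfd}_{RG}A\leq m+n$. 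The composite $\beta\alpha\colon R\to B$ is an $R$-pure $RG$-monomorphism into an $R$-flat module of finite flat dimension over $RG$, so Proposition \ref{prop:Gfd<fd} gives ${\rm Ghd}_RG\leq {\rm fd}_{RG}B\leq m+n$. In short, Proposition \ref{prop:PureMonic} is the device that converts finite ${\rm Gfd}_{RG}$ plus $R$-flatness into finite ${\rm fd}_{RG}$ of a pure enlargement, working entirely over $RG$ with no further control of inflation required; incorporating it closes the gap and recovers the stated bound.
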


\begin{proof}
It suffices to assume that both ${\rm Ghd}_RH = m$ and ${\rm Ghd}_RL = n$ are finite. Then, it follows from Corollary \ref{cor:PureMonic} that there exists an $R$-pure $RL$-exact sequence $0\rightarrow R\stackrel{\alpha}\rightarrow A$ for which $A$ is $R$-flat such that ${\rm fd}_{RL}A = {\rm Ghd}_RL = n$.

For the quotient group $L=G/H$, we may consider every $RL$-module as an $RG$-module through the natural morphism of group rings $RG\rightarrow RL$. We claim that for any flat $RL$-module $F$, one has ${\rm Gfd}_{RG}F\leq {\rm Ghd}_RH$. Note that $RL = R[G/H] = {\rm Ind}_H^GR$; see for example \cite[Proposition III 5.6]{Bro82}. Since ${\rm Ghd}_RH = {\rm Gfd}_{RH}R = m$, there is a Gorenstein flat $RH$-resolution of length $m$
$$0\longrightarrow M_m\longrightarrow \cdots\longrightarrow M_1\longrightarrow M_0\longrightarrow R\longrightarrow 0.$$
By applying the induction functor, we get an exact sequence of $RG$-modules
$$0\longrightarrow {\rm Ind}_H^GM_m\longrightarrow \cdots\longrightarrow {\rm Ind}_H^GM_1\longrightarrow {\rm Ind}_H^GM_0\longrightarrow {\rm Ind}_H^GR = RL\longrightarrow 0.$$
By Lemma \ref{lem:GF-H2G} we infer that all ${\rm Ind}_H^GM_i$ are Gorenstein flat $RG$-modules. Then, we get that ${\rm Gfd}_{RG}RL \leq  m$, and moreover, by \cite[Proposition 3.13]{Hol04} we induce that  ${\rm Gfd}_{RG}P \leq  m$ for any free $RL$-module $P$. For any flat $RL$-module $F$, it follows from Lazard-Govorov Theorem that $F = \varinjlim P_i$ is a direct limit of finitely generated free $RL$-modules $P_i$. Furthermore, since the subcategory of Gorenstein flat modules is closed under direct limits (see \cite[Corollary 4.12]{SS20} and \cite[Lemma 3.1]{YL12}), we prove the claim that ${\rm Gfd}_{RG}F = {\rm Gfd}_{RG}(\varinjlim P_i)\leq m={\rm Ghd}_RH$.

Recall that ${\rm fd}_{RL}A = {\rm Ghd}_RL = n$. We consider an exact sequence of $RL$-modules
$$0\longrightarrow F_n\longrightarrow \cdots\longrightarrow F_1\longrightarrow F_0\longrightarrow A\longrightarrow 0$$
in which all $F_i$ are flat. It follows from the above argument that ${\rm Gfd}_{RG}F_i \leq m$ for $0\leq i\leq n$. By a standard argument, we infer from the above exact sequence that ${\rm Gfd}_{RG}A \leq m+n$.

Since $A$ is an $R$-flat module and ${\rm Gfd}_{RG}A \leq m+n$ is finite, it follows from Proposition \ref{prop:PureMonic} that there is an $R$-pure $RG$-exact sequence $0\rightarrow A\stackrel{\beta}\rightarrow B$, where $B$ is an $R$-flat $RG$-module such that ${\rm fd}_{RG}B = {\rm Gfd}_{RG}A \leq m+n$. Let $\iota = \beta\alpha$. Then, we obtain an $R$-pure $RG$-exact sequence $0\rightarrow R\stackrel{\iota}\rightarrow B$, and consequently, it follows from Proposition \ref{prop:Gfd<fd} that
\begin{center}${\rm Ghd}_RG = {\rm Gfd}_{RG}R \leq {\rm fd}_{RG}B \leq m+n.$\end{center} This completes the proof.
\end{proof}

\begin{prop}\label{prop:GhdL=GhdG}
Let $H$ be a finite normal subgroup of $G$, and $R$ be a commutative ring such that ${\rm sfli}R < \infty$. Then ${\rm Ghd}_RG = {\rm Ghd}_R(G/H)$.
\end{prop}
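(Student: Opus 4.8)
The plan is to prove the two inequalities ${\rm Ghd}_R(G/H) \leq {\rm Ghd}_RG$ and ${\rm Ghd}_RG \leq {\rm Ghd}_R(G/H)$ separately, exploiting in both directions the characterization from Theorem \ref{thm:fGhd} and Corollary \ref{cor:PureMonic} via an $R$-pure monomorphism $R \to A$ with $A$ being $R$-flat of finite flat dimension, together with the finiteness of $H$. Note first that if one of the two quantities is infinite the situation must be handled or shown impossible, so the natural strategy is: assume ${\rm Ghd}_RG = n < \infty$, prove ${\rm Ghd}_R(G/H) \leq n$; then assume ${\rm Ghd}_R(G/H) = n < \infty$ and prove ${\rm Ghd}_RG \leq n$; finiteness of either one then forces finiteness of both by these two inequalities, so the equality follows.

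For the inequality ${\rm Ghd}_R(G/H) \leq {\rm Ghd}_RG$: assume ${\rm Ghd}_RG = n$ is finite. By Corollary \ref{cor:PureMonic} there is an $R$-pure $RG$-exact sequence $0 \to R \to A$ with $A$ being $R$-flat and ${\rm fd}_{RG}A = n$. I would like to pass to the coinvariants (or, dually, use that $H$ is finite to produce an $R[G/H]$-module out of $A$ that is still $R$-flat of controlled flat dimension and receives an $R$-pure monomorphism from $R$). Concretely, since $H$ is finite and normal, applying the coinduction/restriction machinery along $RG \to R[G/H]$ one expects ${\rm fd}_{R[G/H]}$ of a suitable module built from $A$ to be bounded by ${\rm fd}_{RG}A$ — for instance using that $R[G/H] = {\rm Ind}_H^G R$ and that for a finite group $H$ induction and coinduction along $RH \hookrightarrow RG$ agree up to the norm, so a flat $RG$-resolution of $A$ restricts well. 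The trivial $R[G/H]$-module $R$ is Gorenstein flat as an $R$-module (it is even flat when ${\rm fd}_RR = 0$), so Proposition \ref{prop:Gfd<fd} applied over $R[G/H]$ with the $R$-pure monomorphism $R \to A'$ (the module obtained from $A$) gives ${\rm Ghd}_R(G/H) = {\rm Gfd}_{R[G/H]}R \leq {\rm fd}_{R[G/H]}A' \leq n$.

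For the reverse inequality ${\rm Ghd}_RG \leq {\rm Ghd}_R(G/H)$, I expect to reuse almost verbatim the argument inside the proof of Proposition \ref{popo:GhdExt} applied to the extension $1 \to H \to G \to G/H \to 1$: since $H$ is finite, ${\rm Ghd}_RH$ should be controlled — in fact for a finite group $H$ and a coefficient ring with ${\rm sfli}R < \infty$ one has ${\rm Ghd}_RH < \infty$ (indeed the trivial module $R$ is Gorenstein flat over $RH$, so ${\rm Ghd}_RH = 0$, because $RH$-flat resolutions combined with the norm element give total acyclicity; this is where finiteness of $H$ is essential). Granting ${\rm Ghd}_RH = 0$, Proposition \ref{popo:GhdExt} yields directly ${\rm Ghd}_RG \leq {\rm Ghd}_RH + {\rm Ghd}_R(G/H) = {\rm Ghd}_R(G/H)$, which closes the loop.

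The main obstacle I anticipate is the first inequality: transferring the $R$-pure monomorphism $R \to A$ from $RG$-modules down to $R[G/H]$-modules while keeping both $R$-flatness of the target and finiteness of its flat dimension over $R[G/H]$. The clean way is probably to tensor or apply a Frobenius-type functor associated to the finite subgroup $H$ (using that $RG$ is a finitely generated free — hence flat and projective — module over $R[G/H]$ when $H$ is finite, via a transversal), so that ${\rm fd}_{R[G/H]}$ is not increased; then $R$-purity is preserved because the functor is exact and the modules involved remain $R$-flat. Alternatively, one shows directly that ${\rm Ghd}_R H = 0$ and applies Proposition \ref{popo:GhdExt} in both directions once finiteness is known — but the subgroup inequality (Proposition \ref{prop:GhdH<G}) only gives ${\rm Ghd}_RH \leq {\rm Ghd}_RG$, not the needed bound on ${\rm Ghd}_R(G/H)$, so some genuine descent argument for the quotient is unavoidable. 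Establishing ${\rm Ghd}_RH = 0$ for finite $H$ and exhibiting the descent of the $R$-flat resolution are the two technical points to nail down carefully.
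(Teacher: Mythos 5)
Your first inequality is exactly the paper's argument: apply Proposition \ref{popo:GhdExt} to $1\to H\to G\to G/H\to 1$ and use ${\rm Ghd}_RH=0$ for finite $H$ (the trivial module $R$ is Gorenstein flat over $RH$), giving ${\rm Ghd}_RG\le{\rm Ghd}_R(G/H)$. The gap is in the reverse inequality, which you correctly identify as the crux but do not actually prove: what you offer is a plan with an acknowledged obstacle, and the specific mechanisms you suggest do not work as stated. There is no natural $R[G/H]$-module structure on $RG$: the canonical map is the surjection $RG\to R[G/H]$, and a ring homomorphism $R[G/H]\to RG$ exists only if the extension splits, so ``$RG$ is a finitely generated free $R[G/H]$-module via a transversal'' is not meaningful in general (what is true is that $RG$ is free over $RH$, and $R[G/H]\cong RG\otimes_{RH}R$). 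Likewise, induction and coinduction from $H$ to $G$ agree when $H$ has \emph{finite index} in $G$, not merely when $H$ is finite. Finally, if you pass to coinvariants $A_H$, which is only right exact, you still owe arguments that a flat $RG$-resolution of $A$ stays exact after applying $R\otimes_{RH}-$, that $A_H$ is $R$-flat, and that $R\to A_H$ remains an $R$-pure monomorphism; none of this is supplied.

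The paper performs the descent through $H$-invariants, and the details are the real content of the proof. Since $G$ acts trivially on $R$, the pure monomorphism $\iota\colon R\to A$ from Corollary \ref{cor:PureMonic} factors through $A^H$, giving an $R$-pure $R[G/H]$-monomorphism $R\to A^H$. One then shows: (i) because $A$ is $R$-flat, ${\rm fd}_{RG}A<\infty$ and $H$ is finite, $A$ is flat over $RH$, hence $A=\varinjlim P_i$ with $P_i$ finitely generated free $RH$-modules; since $(RH)^H\cong R$ and $R$ is finitely presented over $RH$, $A^H\cong\varinjlim P_i^H$ is a flat $R$-module; (ii) since $R\to RH$ is a Frobenius extension, ${\rm Ext}^1_{RH}(R,RH)=0$, hence ${\rm Ext}^1_{RH}(R,F)=0$ for every flat $RH$-module $F$, so applying $(-)^H={\rm Hom}_{RH}(R,-)$ to a length-$n$ flat $RG$-resolution of $A$ yields an exact sequence of $R[G/H]$-modules with flat terms $F_i^H$, whence ${\rm fd}_{R[G/H]}A^H\le n$; then Proposition \ref{prop:Gfd<fd} over $R[G/H]$ gives ${\rm Ghd}_R(G/H)\le n$. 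Your proposal gestures at a ``Frobenius-type functor'' but never identifies it, nor proves $R$-flatness of its value on $A$, nor the preservation of the flat-dimension bound and of $R$-purity; these are exactly the steps where the finiteness of $H$ is used, so as written the proof of ${\rm Ghd}_R(G/H)\le{\rm Ghd}_RG$ is incomplete.
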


\begin{proof}
By Proposition \ref{popo:GhdExt}, we have ${\rm Ghd}_RG \leq {\rm Ghd}_RH + {\rm Ghd}_R(G/H)$. Since the subgroup $H$ is finite, we infer that ${\rm Ghd}_RH = 0$; see for example \cite[Corollary 3.3]{LR} or \cite[Proposition 4.12]{ABHS11}. In fact, $R$ is a Gorenstein flat $RH$-module whenever $H$ is a finite group. This yields ${\rm Ghd}_RG \leq {\rm Ghd}_R(G/H)$.

Conversely, since the inequality ${\rm Ghd}_R(G/H)\leq {\rm Ghd}_RG$ is obvious if ${\rm Ghd}_RG = \infty$, it suffices to assume ${\rm Ghd}_RG = n$ is finite. In this case, it follows immediately from Corollary \ref{cor:PureMonic} that there exists an $R$-pure $RG$-exact sequence $0\rightarrow R\stackrel{\iota}\rightarrow A$ for which $A$ is $R$-flat such that ${\rm fd}_{RG}A = n$. Since the group $G$ acts trivially on $R$, it implies that ${\rm Im}\iota\subseteq A^G\subseteq A^H$. We may therefore consider the $R[G/H]$-module $A^H$, and the $R$-pure $R[G/H]$-monomorphism $R\rightarrow A^H$.

In the following, we will prove that $A^H$ is $R$-flat and ${\rm fd}_{R[G/H]}A^H \leq n$. Then, the desired inequality ${\rm Ghd}_R(G/H) = {\rm Gfd}_{R[G/H]}R \leq n$ will hold by Proposition \ref{prop:Gfd<fd}.

Since $A$ is $R$-flat with ${\rm fd}_{RG}A < \infty$ and $H$ is a finite group, we infer that $A$ is a flat $RH$-module, and then $A = \varinjlim P_i$ for some finitely generated free $RH$-modules. Since $(RH)^H\cong R$, we infer that $P_i^H$ are finitely generated free $R$-modules. Note that for any $RG$-module $M$, $M^H\cong {\rm Hom}_{RH}(R, M)$ as $R[G/H]$-modules; see for example \cite[pp.56]{Bro82}. Since $H$ is a finite group, $R$ is a finitely presented $RH$-module. Hence, we induce from the isomorphisms
\begin{center}$A^H \cong {\rm Hom}_{RH}(R, \varinjlim P_i) \cong \varinjlim {\rm Hom}_{RH}(R, P_i)\cong \varinjlim P_i^H$\end{center}
that $A^H$ is a flat $R$-module.

For the finite group $H$, $R\rightarrow RH$ is a Frobenius extension of rings, and then ${\rm RHom}_{R}(RH, R)\simeq RH$ in the derived category $\mathbf{D}(RH)$. Hence, we have
\begin{equation*}
\begin{split}
&{\rm RHom}_{RH}(R, RH)\simeq {\rm RHom}_{RH}(R, {\rm RHom}_{R}(RH, R))\\
&\simeq {\rm RHom}_{R}(R\otimes^L_{RH}RH, R) \simeq {\rm RHom}_{R}(R, R) = R,
\end{split}
\end{equation*}
The isomorphisms yield that ${\rm Ext}^1_{RH}(R, RH) = 0$. We remark that the vanishing of ${\rm Ext}^1_{RH}(R, RH)$ can also be obtained by the fact that $R$ is a Gorenstein projective $RH$-module, if one notices that $R\rightarrow RH$ is a Frobenius extension of rings, and the restriction functor from the category of $RH$-modules to the category of $R$-modules is a faithful Frobenius functor; see \cite[Theorem 3.2]{CR22} and \cite[Theorem 2.2]{Ren18}. Moreover, we infer that ${\rm Ext}^1_{RH}(R, F) = 0$ for any flat $RH$-module $F$ since $F$ is a direct limit of some finitely generated free $RH$-modules.

Since ${\rm fd}_{RG}A = n$, there is an exact sequence
$$0\longrightarrow F_n\longrightarrow \cdots\longrightarrow F_1\longrightarrow F_0\longrightarrow A\longrightarrow 0$$
for which $F_i$ are flat $RG$-modules. Analogous to the above argument, we get that $F_i^H\cong {\rm Hom}_{RH}(R, F)$ are flat modules over $R[G/H]$. By restriction, $F_i$ are flat $RH$-modules, and then ${\rm Ext}^1_{RH}(R, F_i) = 0$. By applying
${\rm Hom}_{RH}(R,-)$ to the above sequence, we can obtain an exact sequence of $R[G/H]$-modules
\begin{center}$0\longrightarrow F_n^H\longrightarrow \cdots\longrightarrow F_1^H\longrightarrow F_0^H\longrightarrow A^H\longrightarrow 0.$\end{center}
Hence, ${\rm fd}_{R[G/H]}A^H \leq n$, as expected. This completes the proof.
\end{proof}

The Weyl groups of the subgroups of a given group $G$ are useful tools in the study of actions of $G$ on topological spaces, as these Weyl groups act naturally on the fixed point spaces of the actions. Let $H$ be a subgroup of $G$. We denote by $N_G(H)$ the normalizer of $H$ in $G$.

\begin{cor}\label{cor:GhdW<G}
Let $H$ be a finite subgroup of $G$, and $R$ be a commutative ring such that ${\rm sfli}R < \infty$. Then for the Weyl group $W = N_G(H)/H$, one has ${\rm Ghd}_RW\leq {\rm Ghd}_RG$.
\end{cor}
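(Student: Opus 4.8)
The plan is to combine the two structural results we already have about subgroups (Proposition \ref{prop:GhdH<G}) and quotients by finite normal subgroups (Proposition \ref{prop:GhdL=GhdG}). First I would observe that $H$ is a finite subgroup of $G$, hence a fortiori a finite subgroup of its normalizer $N_G(H)$, and moreover $H$ is \emph{normal} in $N_G(H)$ by the very definition of the normalizer. Thus $N_G(H)/H = W$ is a well-defined group, and $H$ is a finite \emph{normal} subgroup of $N_G(H)$, which is precisely the hypothesis needed to apply Proposition \ref{prop:GhdL=GhdG} to the group $N_G(H)$ and its normal subgroup $H$.

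The argument then runs in two steps. Step one: since $N_G(H)$ is a subgroup of $G$ and ${\rm sfli}R < \infty$ by assumption, Proposition \ref{prop:GhdH<G} gives ${\rm Ghd}_R N_G(H) \leq {\rm Ghd}_RG$. Step two: since $H$ is a finite normal subgroup of $N_G(H)$, Proposition \ref{prop:GhdL=GhdG} applied to $N_G(H)$ yields the equality ${\rm Ghd}_R\big(N_G(H)/H\big) = {\rm Ghd}_R N_G(H)$, i.e. ${\rm Ghd}_R W = {\rm Ghd}_R N_G(H)$. Chaining these, ${\rm Ghd}_RW = {\rm Ghd}_R N_G(H) \leq {\rm Ghd}_RG$, which is the claim.

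In effect there is no genuine obstacle here: the corollary is a formal consequence of the two preceding propositions once one checks the elementary group-theoretic fact that $H$ is normal in $N_G(H)$ and finite there. The only point worth spelling out is that both invoked propositions require the standing hypothesis ${\rm sfli}R < \infty$, which is part of the statement, so nothing extra is needed. I would therefore present the proof as a short two-line deduction: apply Proposition \ref{prop:GhdH<G} to the subgroup $N_G(H) \leq G$, then apply Proposition \ref{prop:GhdL=GhdG} to the finite normal subgroup $H \trianglelefteq N_G(H)$, and combine.
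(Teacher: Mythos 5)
Your proof is correct and coincides with the paper's own argument: apply Proposition \ref{prop:GhdH<G} to the subgroup $N_G(H)\leq G$ and Proposition \ref{prop:GhdL=GhdG} to the finite normal subgroup $H\trianglelefteq N_G(H)$, then combine. Nothing further is needed.
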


\begin{proof}
In view of Proposition \ref{prop:GhdH<G}, we have ${\rm Ghd}_RN_G(H)\leq {\rm Ghd}_RG$. It follows from Proposition \ref{prop:GhdL=GhdG} that ${\rm Ghd}_RW = {\rm Ghd}_RN_G(H)$.  Then, we get the inequality ${\rm Ghd}_RW\leq {\rm Ghd}_RG$.
\end{proof}

For a group $G$, recall that Ikenaga introduced the {\em generalized homological dimension} of $G$ over the ring of integers $\mathbb{Z}$; see \cite[III, Definition]{Ike84}. Analogously, we may define the generalized homological dimension of $G$ over any coefficient ring $R$ as follows:
$$\underline{{\rm hd}}_RG = {\rm sup}\{i \in \mathbb{N}~~|~~ {\rm Tor}^{RG}_i(I, M)\neq 0,~~ \exists M ~~R\text{-flat}, ~~\exists ~~I ~~RG\text{-injective}\}.$$

We conclude this section by comparing the Gorenstein homological dimension and generalized homological dimension of groups.

\begin{prop}\label{prop:equ1}
Let $R$ be a commutative ring of coefficients. If ${\rm Ghd}_RG < \infty$, then ${\rm Ghd}_RG  \leq \underline{{\rm hd}}_RG$. Moreover, if ${\rm sfli}R < \infty$, then ${\rm Ghd}_RG  = \underline{{\rm hd}}_RG$.
\end{prop}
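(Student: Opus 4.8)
The plan is to prove the two inequalities ${\rm Ghd}_RG \leq \underline{{\rm hd}}_RG$ and $\underline{{\rm hd}}_RG \leq {\rm Ghd}_RG$ separately, the first under the hypothesis ${\rm Ghd}_RG < \infty$ alone, the second adding ${\rm sfli}R < \infty$. For the first inequality, set $n = {\rm Ghd}_RG$, which we may assume finite and (by excluding the trivial case) we want to show is at most $\underline{{\rm hd}}_RG$. The idea is to detect the Gorenstein flat dimension of the trivial module $R$ by ${\rm Tor}$ against injective right $RG$-modules. By \cite[Theorem 3.14]{Hol04}, since ${\rm Gfd}_{RG}R = n$, there is some injective right $RG$-module $E$ with ${\rm Tor}^{RG}_n(E, R) \neq 0$; indeed the Gorenstein flat dimension over any ring can be computed via vanishing of ${\rm Tor}$ with injective modules in the top degree once it is known to be finite. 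Now $R$ is trivially $R$-flat, so this choice of $M = R$ and $I = E$ witnesses $\underline{{\rm hd}}_RG \geq n = {\rm Ghd}_RG$.

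For the reverse inequality, assume in addition ${\rm sfli}R < \infty$, and put $d = {\rm Ghd}_RG$. If $d = \infty$ there is nothing to prove, so assume $d < \infty$; then by Corollary \ref{cor:Gfd<+} every $RG$-module has finite Gorenstein flat dimension, and moreover $\mathrm{Gfd}_{RG}M \leq d + {\rm Gfd}_RM$ for every $RG$-module $M$. We must show $\underline{{\rm hd}}_RG \leq d$, i.e. that ${\rm Tor}^{RG}_i(I, M) = 0$ for all $i > d$ whenever $M$ is $R$-flat and $I$ is an injective right $RG$-module. Fix such $M$ and $I$. Since $M$ is $R$-flat it is in particular Gorenstein flat as an $R$-module, so ${\rm Gfd}_RM = 0$, and hence $\mathrm{Gfd}_{RG}M \leq d$. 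Because ${\rm sfli}R < \infty$ implies ${\rm sfli}RG < \infty$ (Theorem \ref{thm:fGhd}, equivalence (1)$\Leftrightarrow$(4)), every acyclic complex of flat $RG$-modules is totally acyclic, so the theory of Gorenstein flat dimension over $RG$ is well behaved; in particular, by the ${\rm Tor}$-characterization (\cite[Theorem 3.14]{Hol04}, applicable after removing the coherence hypothesis via \cite[Corollary 4.12]{SS20}), $\mathrm{Gfd}_{RG}M \leq d$ forces ${\rm Tor}^{RG}_i(I, M) = 0$ for all $i > d$ and all injective right $RG$-modules $I$. This gives $\underline{{\rm hd}}_RG \leq d = {\rm Ghd}_RG$, and combined with the first part yields the equality.

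The main obstacle is making precise the passage between Gorenstein flat dimension and vanishing of ${\rm Tor}$ against injectives in both directions: one needs that over a ring with finite ${\rm sfli}$ (equivalently, finite Gorenstein weak global dimension) one has $\mathrm{Gfd}_{\Lambda}M = \sup\{ i \mid {\rm Tor}^{\Lambda}_i(I,M) \neq 0,\ I\ \text{injective}\}$ whenever the left side is finite. For the upper bound this is essentially \cite[Theorem 3.14]{Hol04}; for the lower bound (the first inequality) one needs that the supremum is actually attained, i.e. that ${\rm Gfd}$ cannot drop below this ${\rm Tor}$-supremum, which again follows from the standard dimension-shifting argument together with the fact that a module of Gorenstein flat dimension zero has vanishing higher ${\rm Tor}$ against injectives. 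A secondary point to check carefully is that ${\rm sfli}R < \infty \Rightarrow {\rm sfli}RG < \infty$ is available at this stage; it is, since it is part of Theorem \ref{thm:fGhd} under the running hypothesis ${\rm Ghd}_RG < \infty$. Everything else is routine dimension shifting and the already-established bound of Corollary \ref{cor:Gfd<+}.
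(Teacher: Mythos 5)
Your proposal is correct and follows essentially the same route as the paper: the first inequality via the ${\rm Tor}$-against-injectives characterization of ${\rm Gfd}_{RG}R$ (\cite[Theorem 3.14]{Hol04}, coherence removed via \cite[Corollary 4.12]{SS20}), and the reverse inequality via Corollary \ref{cor:Gfd<+} together with the observation that an $R$-flat module $M$ has ${\rm Gfd}_RM = 0$, hence ${\rm Gfd}_{RG}M \leq {\rm Ghd}_RG$, which bounds the relevant ${\rm Tor}$ vanishing. You merely spell out the ${\rm Tor}$-vanishing step that the paper compresses into the equality $\underline{{\rm hd}}_RG = \sup\{{\rm Gfd}_{RG}M \mid M\ R\text{-flat}\}$.
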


\begin{proof}
Assume ${\rm Ghd}_RG = {\rm Gfd}_{RG}R$ is finite. It follows from \cite[Theorem 3.14]{Hol04} that
$${\rm Ghd}_RG = {\rm sup}\{i \in \mathbb{N}~~|~~ {\rm Tor}^{RG}_i(I, R)\neq 0, ~~\exists ~~I ~~RG\text{-injective}\}.$$
Then, we infer from the definition of $\underline{{\rm hd}}_RG$ that ${\rm Ghd}_RG \leq \underline{{\rm hd}}_RG$.

Moreover, if we assume ${\rm sfli}R < \infty$, then every $RG$-module has finite Gorenstein flat dimension; see Theorem \ref{thm:fGhd}. Since both ${\rm Ghd}_RG$ and ${\rm sfli}R$ are assumed to be finite, for any $RG$-module $M$, by Corollary \ref{cor:Gfd<+} we have
$$\mathrm{Gfd}_{RG}M \leq \mathrm{Ghd}_RG + {\rm Gfd}_RM.$$
Then, $\underline{{\rm hd}}_RG = {\rm sup}\{ {\rm Gfd}_{RG}M~~|~~ M ~~R\text{-flat}\} \leq {\rm Ghd}_RG$. Consequently, we obtain the desired equality  ${\rm Ghd}_RG  = \underline{{\rm hd}}_RG$.
\end{proof}

In the following, we denote by ${\rm wgldim}R$ the weak global dimension of $R$.

\begin{prop}\label{prop:-sfliRG+}
For any group $G$ and any commutative ring $R$, there are inequalities
$$\underline{{\rm hd}}_RG \leq {\rm sfli}RG \leq \underline{{\rm hd}}_RG + {\rm wgldim}R.$$
\end{prop}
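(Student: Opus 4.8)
The plan is to establish the two inequalities separately. For the left-hand inequality $\underline{{\rm hd}}_RG \leq {\rm sfli}RG$: if ${\rm sfli}RG = \infty$ there is nothing to prove, so assume it is finite. Let $M$ be any $R$-flat $RG$-module and $I$ any injective $RG$-module; I want to show ${\rm Tor}^{RG}_i(I,M) = 0$ for $i > {\rm sfli}RG$. Here $I$ is regarded as a right $RG$-module via the anti-automorphism $g \mapsto g^{-1}$, and then ${\rm Tor}^{RG}_i(I,M)$ can be computed from a flat resolution of $I$ as a right $RG$-module. Since ${\rm sfli}RG = {\rm sfli}(RG)^{op}$ is finite (by \cite[Theorem 5.3]{Emm12} together with \cite[Corollary 1.5]{CET21}, as recalled in the Preliminaries), $I$ has flat dimension at most ${\rm sfli}RG$ as a right $RG$-module, so ${\rm Tor}^{RG}_i(I,M)$ vanishes for $i > {\rm sfli}RG$ for every $M$ whatsoever — in particular for the $R$-flat ones. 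Taking the supremum over admissible $I$ and $M$ gives $\underline{{\rm hd}}_RG \leq {\rm sfli}RG$. (In fact this argument does not even use $R$-flatness of $M$; the point of the $R$-flat hypothesis is to make $\underline{{\rm hd}}_RG$ small, which matters only for the other inequality.)

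For the right-hand inequality ${\rm sfli}RG \leq \underline{{\rm hd}}_RG + {\rm wgldim}R$: again assume the right side is finite, say $\underline{{\rm hd}}_RG = d$ and ${\rm wgldim}R = w$, and let $I$ be an injective $RG$-module; I must bound ${\rm fd}_{RG}I$ by $d+w$. The strategy is to choose a flat (even projective) resolution of $I$ over $RG$ and show its $(d+w)$-th syzygy $K$ is flat, by checking ${\rm Tor}^{RG}_1(N,K) = 0$ for all $RG$-modules $N$ on the right, equivalently ${\rm Tor}^{RG}_{d+w+1}(N,I) = 0$ for all $N$. Now filter an arbitrary $RG$-module $N$ through its flat dimension over $R$: since ${\rm wgldim}R = w$, any $RG$-module $N$ has ${\rm fd}_RN \leq w$, so by a dimension-shifting argument along an $R$-pure (hence $R$-split after tensoring, but better: along a resolution by $RG$-modules that are $R$-flat) resolution, the computation of ${\rm Tor}^{RG}_{>d+w}(N,I)$ reduces to the case where $N$ is $R$-flat. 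For $R$-flat $N$, the definition of $\underline{{\rm hd}}_RG = d$ gives ${\rm Tor}^{RG}_i(N,I) = 0$ for $i > d$ (with $I$ injective on the right). Shifting back up through the $w$ steps of the $R$-flat resolution of $N$ yields ${\rm Tor}^{RG}_i(N,I) = 0$ for $i > d+w$, as wanted.

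The technical heart — and the step I expect to be the main obstacle — is the dimension-shifting reduction ``from arbitrary $N$ to $R$-flat $N$'' in the second inequality. One must take an exact sequence $0 \to N' \to P \to N \to 0$ of $RG$-modules with $P$ projective over $RG$; then $P$ is $R$-flat, and iterating $w$ times produces a $w$-th syzygy $N^{(w)}$ that is $R$-flat (using ${\rm wgldim}R = w$ and that projective $RG$-modules restrict to flat $R$-modules). Each such short exact sequence is automatically $R$-pure is not needed — what is needed is just the long exact sequence in ${\rm Tor}^{RG}(-, I)$, which gives ${\rm Tor}^{RG}_{i}(N,I) \cong {\rm Tor}^{RG}_{i-w}(N^{(w)},I)$ for $i > w$; combined with the vanishing for the $R$-flat module $N^{(w)}$ in degrees $> d$, this closes the argument. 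The only subtlety is bookkeeping the left/right module conventions consistently (the anti-automorphism $g \mapsto g^{-1}$ identifies left and right $RG$-modules and is compatible with restriction to $R$), and making sure that ``injective $RG$-module'' in the definition of $\underline{{\rm hd}}_RG$ is the same class whether viewed on the left or on the right — which it is, again via $g \mapsto g^{-1}$.
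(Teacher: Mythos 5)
Your argument is correct and follows essentially the same route as the paper: the first inequality comes from the finiteness of ${\rm sfli}RG$ (equivalently ${\rm sfli}(RG)^{op}$) forcing ${\rm Tor}^{RG}_i(I,-)$ to vanish above ${\rm sfli}RG$, and the second from dimension-shifting an arbitrary module along a projective $RG$-resolution so that the $w$-th syzygy is $R$-flat, then invoking the definition of $\underline{{\rm hd}}_RG$. The only differences are cosmetic (the paper treats ${\rm wgldim}R=0$ separately and phrases the side-switching via symmetry of the Gorenstein weak global dimension rather than the anti-automorphism $g\mapsto g^{-1}$), so no changes are needed.
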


\begin{proof}
If we assume ${\rm sfli}RG < \infty$, it follows from \cite[Theorem 5.3]{Emm12} that ${\rm sfli}RG = {\rm G.wgldim}RG$, and then the first inequality is clear.

In order to prove the second inequality, it suffices to assume that both $\underline{{\rm hd}}_RG = m$ and ${\rm wgldim}R = n$ are finite.
If $n=0$, then every $R$-module is flat, and moreover, $\underline{{\rm hd}}_RG = {\rm G.wgldim}RG$ holds immediately from the definitions. Note that $\underline{{\rm hd}}_RG$ is assumed to be finite, and then we have ${\rm sfli}RG = \underline{{\rm hd}}_RG$.

Now assume $n > 0$. In this case, the argument goes by induction on the flat dimension of any $RG$-module $M$. We consider the exact sequence of $RG$-modules
$$0\longrightarrow K\longrightarrow P_{n-1}\longrightarrow \cdots P_1\longrightarrow P_0\longrightarrow M\longrightarrow 0$$
for which each $P_i$ is projective for $0\leq i<n$. Since $P_i$ is restricted to be a projective $R$-module, we get that $K$ is a flat $R$-module as ${\rm wgldim}R = n$. For any injective right $RG$-module $I$ and any $k >0$, there are isomorphisms $${\rm Tor}^{RG}_{k}(I, M) \cong {\rm Tor}^{RG}_{k-n}(I, K).$$
We infer from the assumption $\underline{{\rm hd}}_RG = m$ that ${\rm Tor}^{RG}_{k-n}(I, K) = 0$ for any $k-n > m$, and then
${\rm Tor}^{RG}_{k}(I, M) = 0$ for any $k > m+n$. Hence, we have ${\rm fd}_{(RG)^{op}}I \leq m+n$, and moreover, ${\rm sfli}(RG)^{op}\leq m+n$. Since Gorenstein weak global dimension is symmetric, it follows from \cite[Theorem 5.12]{Emm12} or \cite[Corollary 2.5]{CET21} that ${\rm sfli}RG = {\rm sfli}(RG)^{op}$, and consequently, ${\rm sfli}RG \leq m+n$. This completes the proof.
\end{proof}

\begin{cor}\label{cor:equ1}
Let $R$ be a commutative ring with finite weak global dimension. If $\underline{{\rm hd}}_RG$ is finite, then ${\rm Ghd}_RG  = \underline{{\rm hd}}_RG$.
\end{cor}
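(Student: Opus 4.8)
The plan is to assemble the pieces already established: the finiteness of $\mathrm{wgldim}R$ is strong enough to force ${\rm sfli}RG$ — and hence ${\rm Ghd}_RG$ — to be finite, after which Proposition \ref{prop:equ1} closes the gap and upgrades the inequality to an equality.

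First I would record the trivial bound ${\rm sfli}R \leq \mathrm{wgldim}R < \infty$, which holds because every injective $R$-module has flat dimension at most the weak global dimension of $R$. In particular ${\rm sfli}R$ is finite, so the standing hypothesis of Theorem \ref{thm:fGhd} is met, and the hypotheses of the ``moreover'' clause of Proposition \ref{prop:equ1} will be available once we know ${\rm Ghd}_RG < \infty$.

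Next, using the assumption $\underline{{\rm hd}}_RG < \infty$ together with $\mathrm{wgldim}R < \infty$, Proposition \ref{prop:-sfliRG+} gives ${\rm sfli}RG \leq \underline{{\rm hd}}_RG + \mathrm{wgldim}R < \infty$. Since ${\rm sfli}R < \infty$, the equivalence (1)$\Longleftrightarrow$(4) in Theorem \ref{thm:fGhd} then yields ${\rm Ghd}_RG < \infty$. Finally, with both ${\rm Ghd}_RG$ and ${\rm sfli}R$ finite, the second assertion of Proposition \ref{prop:equ1} delivers the desired equality ${\rm Ghd}_RG = \underline{{\rm hd}}_RG$.

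I do not expect any genuine obstacle here: the corollary is a formal consequence of Proposition \ref{prop:-sfliRG+}, Theorem \ref{thm:fGhd}, and Proposition \ref{prop:equ1}. The only point requiring care is the bookkeeping of which finiteness hypotheses feed which result — in particular that $\mathrm{wgldim}R < \infty$ is used twice, once to bound ${\rm sfli}R$ (so that Theorem \ref{thm:fGhd} and the last part of Proposition \ref{prop:equ1} apply) and once to bound ${\rm sfli}RG$ through Proposition \ref{prop:-sfliRG+}.
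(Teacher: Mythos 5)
Your argument is correct and is essentially the paper's own proof: bound ${\rm sfli}RG$ via Proposition \ref{prop:-sfliRG+}, deduce ${\rm Ghd}_RG < \infty$ from Theorem \ref{thm:fGhd}, and conclude with the second part of Proposition \ref{prop:equ1}. Your explicit remark that ${\rm sfli}R \leq {\rm wgldim}R < \infty$ supplies the standing hypothesis for those results is the only detail the paper leaves implicit, and it is correctly handled.
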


\begin{proof}
Following the assumptions, we infer from Proposition \ref{prop:-sfliRG+} that ${\rm sfli}RG \leq \underline{{\rm hd}}_RG + {\rm wgldim}R < \infty$. By Theorem \ref{thm:fGhd}, this induces ${\rm Ghd}_RG < \infty$. Then, the equality ${\rm Ghd}_RG  = \underline{{\rm hd}}_RG$ holds by Proposition \ref{prop:equ1} immediately.
\end{proof}

\begin{cor}\label{cor:equ2}
${\rm Ghd}_\mathbb{Z}G < \infty$ if and only if $\underline{{\rm hd}}_\mathbb{Z}G < \infty$. In this case, ${\rm Ghd}_\mathbb{Z}G  = \underline{{\rm hd}}_\mathbb{Z}G$.
\end{cor}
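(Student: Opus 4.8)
The plan is to deduce this as a direct specialization of the machinery already assembled, taking the coefficient ring to be $R = \mathbb{Z}$. The key structural fact is that $\mathbb{Z}$ is a commutative ring with finite weak global dimension: indeed ${\rm wgldim}\,\mathbb{Z} = 1$, since $\mathbb{Z}$ is a (commutative) hereditary ring — every submodule of a flat, hence torsion-free, abelian group is again torsion-free hence flat. In particular ${\rm sfli}\,\mathbb{Z} \leq {\rm wgldim}\,\mathbb{Z} = 1 < \infty$, so $\mathbb{Z}$ satisfies all the hypotheses imposed on $R$ throughout Sections 3 and 4.

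First I would handle the forward implication: if ${\rm Ghd}_\mathbb{Z}G < \infty$, then by Proposition \ref{prop:equ1} (whose second assertion applies since ${\rm sfli}\,\mathbb{Z} < \infty$) we immediately get ${\rm Ghd}_\mathbb{Z}G = \underline{{\rm hd}}_\mathbb{Z}G$, and in particular $\underline{{\rm hd}}_\mathbb{Z}G < \infty$. Conversely, if $\underline{{\rm hd}}_\mathbb{Z}G < \infty$, then since $\mathbb{Z}$ has finite weak global dimension, Corollary \ref{cor:equ1} applies verbatim with $R = \mathbb{Z}$ and yields both ${\rm Ghd}_\mathbb{Z}G < \infty$ and the equality ${\rm Ghd}_\mathbb{Z}G = \underline{{\rm hd}}_\mathbb{Z}G$. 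Combining the two directions gives the stated equivalence together with the equality in the finite case.

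There is essentially no obstacle here — the statement is purely a corollary obtained by instantiating $R = \mathbb{Z}$ in Corollary \ref{cor:equ1} and Proposition \ref{prop:equ1}. The only point that requires a word of justification (and which I would spell out explicitly rather than leave implicit) is that $\mathbb{Z}$ genuinely has finite weak global dimension, so that the hypothesis of Corollary \ref{cor:equ1} is met; everything else is a formal consequence. If anything, the write-up should simply cite Corollary \ref{cor:equ1} for the backward direction and Proposition \ref{prop:equ1} for the forward direction, noting ${\rm wgldim}\,\mathbb{Z} = 1$.
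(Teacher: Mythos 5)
Your proposal is correct and follows exactly the route the paper intends: the corollary is the instantiation $R=\mathbb{Z}$ of Proposition \ref{prop:equ1} (forward direction, using ${\rm sfli}\,\mathbb{Z}\leq{\rm wgldim}\,\mathbb{Z}=1<\infty$) and Corollary \ref{cor:equ1} (backward direction), which is why the paper states it without further proof. Your explicit remark that $\mathbb{Z}$ is hereditary, so its weak global dimension is finite, is the only hypothesis check needed and is handled properly.
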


\section{Some relative homological dimensions and invariants}

Let $\Lambda$ be any associative ring with identity. We consider an acyclic complex of projective left $\Lambda$-modules
\begin{center}$\mathbf{P} = \cdots\longrightarrow P_{n+1}\longrightarrow P_{n}\longrightarrow P_{n-1}\longrightarrow\cdots$\end{center}
and a $\Lambda$-module $M\cong {\rm Ker}(P_0\rightarrow P_{-1})$. Recall that the module $M$ is \emph{Gorenstein projective} \cite{EJ00} if the complex $\mathbf{P}$ remains acyclic after applying $\mathrm{Hom}_{\Lambda}(-, Q)$ for any projective module $Q$; $M$ is called a {\em projectively coresolved Gorenstein flat module}, or a {\em PGF-module} for short \cite{SS20}, provided that the complex $\mathbf{P}$ remains acyclic after applying $I\otimes_{\Lambda}-$ for any injective right $\Lambda$-module $I$. It is clear that PGF-modules are Gorenstein flat. As shown in \cite[Theorem 4.4]{SS20}, PGF-modules are also Gorenstein projective.

Every projective module is flat, however, it is not at all clear from the definitions that Gorenstein projective modules are Gorenstein flat. It was shown in \cite[Proposition 3.4]{Hol04} that every left Gorenstein projective module is Gorenstein flat if the base ring is right coherent and has finite left finitistic dimension. In general, the relation between Gorenstein projective and Gorenstein flat modules remains somehow mysterious, and it is still an open problem that if every Gorenstein projective module is Gorenstein flat.

We have the following observation, which gives a sufficient condition for affirmative answer of Gorenstein projective-flat problem.

\begin{prop}\label{prop:obs}
Let $G$ be a group and $R$ be a commutative ring. If ${\rm Ghd}_RG<\infty$, then ${\rm sfli}R < \infty$ if and only if ${\rm sfli}RG < \infty$. Moreover, if both ${\rm Ghd}_RG$ and ${\rm sfli}R$ are finite, then every Gorenstein projective $RG$-module is a PGF-module, and furthermore is a Gorenstein flat module.
\end{prop}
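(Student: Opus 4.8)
The plan is to reduce everything to Theorem \ref{thm:fGhd} together with the elementary fact, recorded in the preliminaries, that finiteness of ${\rm sfli}\Lambda$ forces every acyclic complex of flat $\Lambda$-modules to be totally acyclic.

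For the first assertion one direction is essentially free: if ${\rm sfli}RG<\infty$, then for any injective $R$-module $I$ the coinduced module ${\rm Coind}_{\{1\}}^G I = {\rm Hom}_R(RG,I)$ is an injective $RG$-module, hence of finite $RG$-flat dimension, hence of finite $R$-flat dimension (flat $RG$-modules are $R$-flat). Since $I$ is an $R$-direct summand of ${\rm Coind}_{\{1\}}^G I$, this bounds ${\rm fd}_R I$, so ${\rm sfli}R \leq {\rm sfli}RG < \infty$. This is exactly the argument already used inside the proof of Theorem \ref{thm:fGhd}, and it uses neither ${\rm Ghd}_RG<\infty$ nor ${\rm sfli}R<\infty$. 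Conversely, if ${\rm sfli}R<\infty$, then the hypothesis ${\rm Ghd}_RG<\infty$ is precisely condition (1) of Theorem \ref{thm:fGhd}, so condition (4) holds, i.e. ${\rm sfli}RG<\infty$.

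For the second assertion, suppose now both ${\rm Ghd}_RG$ and ${\rm sfli}R$ are finite; by the first part ${\rm sfli}RG<\infty$. Let $M$ be a Gorenstein projective $RG$-module, and choose an acyclic complex of projective $RG$-modules $\mathbf{P} = \cdots\to P_1\to P_0\to P_{-1}\to\cdots$ with $M\cong{\rm Ker}(P_0\to P_{-1})$ which remains acyclic after ${\rm Hom}_{RG}(-,Q)$ for every projective $RG$-module $Q$. The key observation is simply that each $P_i$ is in particular flat over $RG$, so $\mathbf{P}$ is an acyclic complex of flat $RG$-modules; since ${\rm sfli}RG<\infty$, it is automatically totally acyclic as a complex of flat modules, i.e. $I\otimes_{RG}\mathbf{P}$ is acyclic for every injective right $RG$-module $I$. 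Thus $\mathbf{P}$ exhibits $M$ as a PGF-module, and since PGF-modules are Gorenstein flat, $M$ is Gorenstein flat as well.

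As for the main difficulty: there is essentially no hard computation here, since a complete projective resolution is in particular a complete flat resolution, after which finiteness of ${\rm sfli}RG$ does all the work of promoting acyclicity-as-a-flat-complex to total acyclicity. The only non-elementary input is the finiteness of ${\rm sfli}RG$, but that is already packaged in Theorem \ref{thm:fGhd}; everything else is definitional.
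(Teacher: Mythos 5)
Your proposal is correct and takes essentially the same route as the paper: the first assertion is exactly the combination of the inequality ${\rm sfli}R \leq {\rm sfli}RG$ with Theorem \ref{thm:fGhd}, and the second assertion is the observation that a complete projective resolution is in particular an acyclic complex of flat $RG$-modules, which finiteness of ${\rm sfli}$ promotes to total acyclicity, so the module is PGF and hence Gorenstein flat. The only cosmetic difference is that the paper first records ${\rm sfli}(RG)^{op} = {\rm sfli}RG < \infty$ and carries out the induction on the flat dimension of an injective right $RG$-module explicitly, whereas you invoke the remark from the preliminaries that packages this same induction (the left/right passage being harmless by the symmetry of ${\rm sfli}$).
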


\begin{proof}
The equivalence of finiteness of ${\rm sfli}R$ and ${\rm sfli}RG$ is easy to see.  The ``if'' part follows from the inequality ${\rm sfli}R \leq {\rm sfli}RG$. The ``only if'' part follows from Theorem \ref{thm:fGhd}.

Note that the finiteness of both ${\rm Ghd}_RG$ and ${\rm sfli}R$ implies ${\rm sfli}(RG)^{op} = {\rm sfli}RG < \infty$. In this case, by induction on the flat dimension of any injective right $RG$-module $I$, we infer that any acyclic complex of projective $RG$-modules remains acyclic after applying $I\otimes_{RG}-$. Hence, invoking the definitions it is immediate that every Gorenstein projective $RG$-module is a PGF-module, and moreover, is a Gorenstein flat module; see also \cite[Remark 2.3 (ii)]{Emm12}.
\end{proof}

Let $\Lambda$ be a ring. It follows from \cite[1.6]{GG87} that the finiteness of both ${\rm spli}\Lambda$ and ${\rm silp}\Lambda$ implies that ${\rm spli}\Lambda = {\rm silp}\Lambda$. If $\Lambda$ is a commutative noetherian ring, by \cite[5.9]{Jen72} one has the equality ${\rm spli}\Lambda = {\rm silp}\Lambda$. Let $R$ be a {\em commutative Gorenstein ring} (noetherian ring with finite self-injective dimension), and $G$ be any group. It follows from \cite[Theorem 4.4]{Emm10} that ${\rm spli}RG = {\rm silp}RG$.

We will consider ${\rm sfli}RG$ and ${\rm silf}RG$, and show that the comparison between ${\rm sfli}RG$ and ${\rm silf}RG$ is essentially about a problem on the relation between the projective and flat dimensions of modules. Let $\overline{\mathcal{P}}(RG)$ and $\overline{\mathcal{F}}(RG)$ denote the classes of $RG$-modules with finite projective dimension and finite flat dimension, respectively. The invariant ${\rm splf}RG$ is defined as the supremum of projective length (dimension) of flat $RG$-modules, which is due to \cite[Section II.3.3]{RG71}, while the notation ``splf'' is introduced in \cite{ET11}. See also \cite[Proposition 3.2]{CET21} for a characterization for ``splf''.

\begin{prop}\label{prop:silfRG<}
Let $R$ be a commutative Gorenstein ring, $G$ be a group. The following are equivalent:
\begin{enumerate}
\item ${\rm silf}RG < \infty$.
\item ${\rm sfli}RG < \infty$ and ${\rm splf}RG < \infty$.
\item ${\rm sfli}RG < \infty$ and $\overline{\mathcal{P}}(RG) = \overline{\mathcal{F}}(RG)$.
\end{enumerate}
\end{prop}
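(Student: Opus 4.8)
The plan is to prove the cyclic chain of implications $(1)\Rightarrow(2)\Rightarrow(3)\Rightarrow(1)$, exploiting the fact that over a commutative Gorenstein ring the coefficient ring $R$ has finite self-injective dimension, so that $R$ is a ring with ${\rm sfli}R<\infty$ (indeed ${\rm sfli}R\le {\rm id}_RR<\infty$), which lets us invoke Theorem \ref{thm:fGhd} freely.

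For $(1)\Rightarrow(2)$: assume ${\rm silf}RG=d<\infty$. First, every projective $RG$-module is flat, hence has injective dimension at most $d$, so ${\rm silp}RG\le d<\infty$; by \cite[Theorem 4.4]{Emm10} (quoted just before the statement) this gives ${\rm spli}RG={\rm silp}RG<\infty$. Now I would argue ${\rm sfli}RG<\infty$: given an injective $RG$-module $E$, one takes a flat (or projective) resolution and uses that the syzygies are flat; since flat modules have injective dimension $\le d$ and injective modules have finite projective dimension (from ${\rm spli}RG<\infty$), a dimension-shifting/``change of rings''-type count forces ${\rm fd}_{RG}E<\infty$ with a uniform bound. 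Concretely, the standard argument is: ${\rm silf}RG<\infty$ together with ${\rm spli}RG<\infty$ yields that the finitistic flat dimension is finite, and in particular ${\rm sfli}RG<\infty$. For ${\rm splf}RG<\infty$: a flat module $F$ has ${\rm id}_{RG}F\le d$; combined with ${\rm spli}RG<\infty$ one shows ${\rm pd}_{RG}F<\infty$ uniformly — this is exactly the kind of sandwich that appears in \cite{GG87}-style arguments, so ${\rm splf}RG<\infty$.

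For $(2)\Rightarrow(3)$: this is essentially the definition of ${\rm splf}$. By \cite[Proposition 3.2]{CET21}, ${\rm splf}RG<\infty$ is equivalent to the statement that every flat $RG$-module has finite projective dimension, i.e. $\overline{\mathcal F}(RG)\subseteq\overline{\mathcal P}(RG)$; the reverse inclusion $\overline{\mathcal P}(RG)\subseteq\overline{\mathcal F}(RG)$ always holds since projective modules are flat. Hence $\overline{\mathcal P}(RG)=\overline{\mathcal F}(RG)$, and we still have ${\rm sfli}RG<\infty$.

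For $(3)\Rightarrow(1)$: assume ${\rm sfli}RG<\infty$ and $\overline{\mathcal P}(RG)=\overline{\mathcal F}(RG)$. Since ${\rm sfli}R<\infty$ (as $R$ is Gorenstein), Theorem \ref{thm:fGhd} gives ${\rm Ghd}_RG<\infty$ and ${\rm sfli}RG={\rm G.wgldim}RG<\infty$. Now take any flat $RG$-module $F$; by hypothesis ${\rm pd}_{RG}F<\infty$, and one checks the bound is uniform (e.g. via \cite[Proposition 3.2]{CET21} again, or because ${\rm splf}RG$ is finite as soon as every flat module has finite projective dimension over a reasonable ring), so ${\rm splf}RG=s<\infty$. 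Then for any injective $RG$-module $E$, since $R$ Gorenstein gives ${\rm id}_RE<\infty$ hence ${\rm id}_{RG}({\rm Coind}E)$-type control, the key point is that an injective module has finite flat dimension ($\le {\rm sfli}RG$), hence finite projective dimension (using $\overline{\mathcal F}(RG)=\overline{\mathcal P}(RG)$), i.e. ${\rm spli}RG<\infty$; dually/symmetrically one gets ${\rm silp}RG<\infty$ from the Gorenstein hypothesis on $R$ (self-injective dimension of $R$ is finite, so projective $RG$-modules, being summands of free ones, have finite injective dimension). Finally, a flat module $F$ satisfies ${\rm pd}_{RG}F\le s<\infty$, so ${\rm id}_{RG}F\le {\rm pd}_{RG}F+{\rm silp}RG<\infty$ uniformly, giving ${\rm silf}RG<\infty$.

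The main obstacle I anticipate is making the uniform boundedness of the various suprema precise — each implication needs not just "finite for each module" but a genuine uniform bound, and the cleanest route is to lean on \cite[Proposition 3.2]{CET21} for the ${\rm splf}$ characterization and on the Gorenstein hypothesis (finite self-injective dimension of $R$) to control injective dimensions of projective $RG$-modules, rather than re-deriving these. The interplay ${\rm spli}RG={\rm silp}RG$ from \cite{GG87,Emm10} is what glues "flat $\leftrightarrow$ projective" to "injective dimension" symmetrically, and I would state that equivalence explicitly at the start of the proof so that all three conditions can be rephrased in terms of the single question of whether flat $RG$-modules have finite projective dimension.
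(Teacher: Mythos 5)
Your (1)$\Rightarrow$(2) and (2)$\Leftrightarrow$(3) steps essentially reproduce the paper's argument: ${\rm silp}RG\leq{\rm silf}RG$ since projectives are flat, ${\rm spli}RG={\rm silp}RG$ by \cite[Theorem 4.4]{Emm10}, ${\rm sfli}RG\leq{\rm spli}RG$ (you do not need any finitistic-dimension detour here, since ${\rm fd}\leq{\rm pd}$), the coresolution ``sandwich'' bounding ${\rm pd}$ of flat modules by ${\rm spli}RG$, and the observation that ${\rm splf}RG<\infty$ is the same as $\overline{\mathcal{F}}(RG)\subseteq\overline{\mathcal{P}}(RG)$ (uniformity is automatic because projective dimension commutes with direct sums; note that \cite[Proposition 3.2]{CET21} is cited in the paper for a different, Gorenstein-flavored characterization of ${\rm splf}$). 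The genuine gap is in (3)$\Rightarrow$(1), at the point where you claim ${\rm silp}RG<\infty$ ``from the Gorenstein hypothesis on $R$ (self-injective dimension of $R$ is finite, so projective $RG$-modules, being summands of free ones, have finite injective dimension).'' This is false: ${\rm id}_{RG}(RG)$ is not controlled by ${\rm id}_RR$, and finiteness of ${\rm silp}RG$ is a restrictive condition on $G$ even for $R=\mathbb{Z}$, which is Gorenstein. For instance, if $G$ contains free abelian subgroups of arbitrarily large rank, then ${\rm spli}\mathbb{Z}G={\rm silp}\mathbb{Z}G=\infty$; were your claim correct, every group would have ${\rm silp}\mathbb{Z}G<\infty$ and the proposition would be nearly vacuous. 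Since your final estimate ${\rm id}_{RG}F\leq{\rm pd}_{RG}F+{\rm silp}RG$ relies on this step, the implication (3)$\Rightarrow$(1) is not proved as written.

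The repair is exactly the ingredient the paper uses and which you already invoked in (1)$\Rightarrow$(2): having shown ${\rm spli}RG<\infty$ (your argument for this is fine: injectives have flat dimension at most ${\rm sfli}RG$, hence finite projective dimension by $\overline{\mathcal{F}}(RG)=\overline{\mathcal{P}}(RG)$), conclude ${\rm silp}RG<\infty$ from \cite[Theorem 2.4]{GG87}, or from the equality ${\rm spli}RG={\rm silp}RG$ of \cite[Theorem 4.4]{Emm10} valid for commutative Gorenstein $R$; then either your bound ${\rm silf}RG\leq{\rm splf}RG+{\rm silp}RG$ or the equality ${\rm silf}RG={\rm silp}RG$ of \cite[Proposition 2.1]{ET11} (which is what the paper quotes) finishes the proof. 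With that substitution your argument coincides with the paper's proof of (2)$\Rightarrow$(1); the appeal to Theorem \ref{thm:fGhd} and ${\rm Ghd}_RG$ in your (3)$\Rightarrow$(1) is superfluous.
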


\begin{proof}
(1)$\Longrightarrow$(2) Since the coefficient ring $R$ is commutative Gorenstein, combining with \cite[Theorem 4.4]{Emm10} and
\cite[Proposition 2.1]{ET11}, we have ${\rm spli}RG = {\rm silp}RG = {\rm silf}RG$. Hence, we infer from ${\rm silf}RG < \infty$ that ${\rm sfli}RG \leq {\rm spli}RG < \infty$. Moreover, the finiteness of both ${\rm silf}RG$ and ${\rm spli}RG$ yields that the projective dimension of any flat $RG$-module is finite, and hence ${\rm splf}RG < \infty$.

(2)$\Longrightarrow$(1) The finiteness of both ${\rm sfli}RG$ and ${\rm splf}RG$ implies that the projective dimension of any injective $RG$-module is finite, that is, ${\rm spli}RG < \infty$. Then, it follows from \cite[Theorem 2.4]{GG87} that ${\rm silp}RG$ is also finite. Moreover, it follows from \cite[Proposition 2.1]{ET11} that ${\rm silf}RG = {\rm silp}RG < \infty$.

(2)$\Longleftrightarrow$(3) Since projective modules are flat, it is clear that $\overline{\mathcal{P}}(RG) \subseteq \overline{\mathcal{F}}(RG)$. It is easy to see that the projective dimension of any flat $RG$-module is finite if and only if $\overline{\mathcal{F}}(RG) \subseteq \overline{\mathcal{P}}(RG)$. Hence, the assertion follows.
\end{proof}

For any $\Lambda$-module $M$, the \emph{Gorenstein projective dimension}, denoted by $\mathrm{Gpd}_\Lambda M$, is defined in the standard way via resolutions by Gorenstein projective modules; see \cite{EJ00, Hol04}. The {\em Gorenstein global dimension} of $\Lambda$ \cite{BM10}, denoted by ${\rm G.gldim}\Lambda$, is defined as the supremum of Gorenstein projective dimension of all $\Lambda$-modules. Analogously, the {\em PGF-dimension} ${\rm PGF}\text{-}{\rm dim}_\Lambda M$ of any $\Lambda$-module $M$, and the {\em PGF-global dimension} ${\rm PGF}\text{-}{\rm gldim}\Lambda$ are defined. The Gorenstein projective dimension is a refinement of the PGF-dimension in the sense that ${\rm Gpd}_\Lambda M\leq {\rm PGF}\text{-}{\rm dim}_\Lambda M$ for any module $M$, and ${\rm Gpd}_\Lambda M = {\rm PGF}\text{-}{\rm dim}_\Lambda M$ if ${\rm PGF}\text{-}{\rm dim}_\Lambda M < \infty$; see \cite[Corollary 3.7]{DE22}.

\begin{cor}\label{cor:GP-F(RG)}
Let $R$ be a commutative Gorenstein ring, $G$ be any group. If ${\rm silf}RG$ is finite, then the following hold:
\begin{enumerate}
\item ${\rm Ghd}_RG < \infty$, and every Gorenstein projective $RG$-module is Gorenstein flat.
\item For any Gorenstein flat $RG$-module $M$, ${\rm PGF}\text{-}{\rm dim}_{RG}M = {\rm Gpd}_{RG}M < \infty$.
\end{enumerate}
\end{cor}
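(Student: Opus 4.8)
The plan is to reduce both assertions to the equivalences already established, the only genuinely new input being that a commutative Gorenstein ring $R$ (noetherian of finite self-injective dimension, say $d$) is Iwanaga--Gorenstein, hence every injective $R$-module has flat dimension at most $d$, so that ${\rm sfli}R<\infty$. For part (1) I would first apply the implication (1)$\Rightarrow$(2) of Proposition \ref{prop:silfRG<} to turn the hypothesis ${\rm silf}RG<\infty$ into the conjunction ${\rm sfli}RG<\infty$ and ${\rm splf}RG<\infty$; both should be kept, as the second is needed later. Since ${\rm sfli}R<\infty$, Theorem \ref{thm:fGhd} applies, and its equivalence (1)$\Leftrightarrow$(4) converts ${\rm sfli}RG<\infty$ into ${\rm Ghd}_RG<\infty$. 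Now ${\rm Ghd}_RG$ and ${\rm sfli}R$ are both finite, so the second assertion of Proposition \ref{prop:obs} applies verbatim: every Gorenstein projective $RG$-module is a PGF-module, and hence a Gorenstein flat module. This is exactly (1), obtained with essentially no new argument.

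For part (2), fix a Gorenstein flat $RG$-module $M$, so ${\rm Gfd}_{RG}M=0$. Since one always has ${\rm Gpd}_{RG}M\le {\rm PGF}\text{-}{\rm dim}_{RG}M$, with equality as soon as the right-hand side is finite by \cite[Corollary 3.7]{DE22}, the whole statement reduces to showing ${\rm PGF}\text{-}{\rm dim}_{RG}M<\infty$. Here I would use the bound ${\rm splf}RG<\infty$ produced in part (1): the PGF-machinery of \cite{SS20} and \cite{DE22} shows that finiteness of ${\rm splf}\Lambda$ forces ${\rm PGF}\text{-}{\rm dim}_\Lambda N\le {\rm Gfd}_\Lambda N+{\rm splf}\Lambda$ for every $\Lambda$-module $N$ (the module-level form of the global estimate ${\rm G.gldim}\Lambda\le {\rm G.wgldim}\Lambda+{\rm splf}\Lambda$). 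Applied to $M$, this gives ${\rm PGF}\text{-}{\rm dim}_{RG}M\le {\rm splf}RG<\infty$, and then \cite[Corollary 3.7]{DE22} upgrades it to the desired ${\rm PGF}\text{-}{\rm dim}_{RG}M={\rm Gpd}_{RG}M<\infty$.

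An alternative, closer to the tools already used in Proposition \ref{prop:silfRG<}: because $R$ is commutative Gorenstein, \cite[Theorem 4.4]{Emm10} and \cite[Proposition 2.1]{ET11} give ${\rm spli}RG={\rm silp}RG<\infty$, and the finiteness of both of these forces ${\rm G.gldim}RG<\infty$, whence ${\rm Gpd}_{RG}N<\infty$ for every $N$; combined with the observation (already exploited in Proposition \ref{prop:obs}) that ${\rm sfli}(RG)^{\rm op}<\infty$ makes every acyclic complex of projective $RG$-modules remain acyclic after $I\otimes_{RG}-$ for injective right $RG$-modules $I$ — so that Gorenstein projective and PGF $RG$-modules form the same class — one again concludes ${\rm PGF}\text{-}{\rm dim}_{RG}M={\rm Gpd}_{RG}M<\infty$. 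I expect the main obstacle to be precisely this finiteness of the PGF-dimension of a Gorenstein flat module: everything else is bookkeeping with Theorem \ref{thm:fGhd}, Proposition \ref{prop:silfRG<} and Proposition \ref{prop:obs}, whereas this step genuinely needs the extra input ${\rm splf}RG<\infty$ together with the nontrivial closure and comparison properties of PGF-modules from \cite{SS20} and \cite{DE22}. One should also take care that \cite[Theorem 4.4]{Emm10} and \cite[Proposition 2.1]{ET11} really require $R$ to be commutative Gorenstein, and not merely that ${\rm sfli}R<\infty$.
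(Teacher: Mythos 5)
Your proposal is correct and follows essentially the same route as the paper: Proposition \ref{prop:silfRG<} converts ${\rm silf}RG<\infty$ into ${\rm sfli}RG<\infty$ and ${\rm splf}RG<\infty$, part (1) then follows from Theorem \ref{thm:fGhd} and Proposition \ref{prop:obs} (using ${\rm sfli}R\leq{\rm spli}R<\infty$ for the commutative Gorenstein ring $R$, exactly as you note), and for part (2) your bound ${\rm PGF}\text{-}{\rm dim}_{RG}M\leq {\rm Gfd}_{RG}M+{\rm splf}RG$ together with \cite[Corollary 3.7]{DE22} is precisely the paper's citation of \cite[Proposition 3.9]{DE22} and \cite[Corollary 3.7 (2)]{DE22}. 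Your alternative argument via ${\rm spli}RG={\rm silp}RG<\infty$ and the coincidence of Gorenstein projective and PGF $RG$-modules is also sound, but it is not needed.
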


\begin{proof}
By Proposition \ref{prop:silfRG<}, the finiteness of ${\rm silf}RG$ implies both ${\rm sfli}RG < \infty$ and ${\rm splf}RG < \infty$.
Then the assertion (1) follows by Theorem \ref{thm:fGhd} and Proposition \ref{prop:obs}, and the assertion (2) is immediate from \cite[Proposition 3.9]{DE22} together with \cite[Corollary 3.7 (2)]{DE22}.
\end{proof}

\begin{cor}\label{cor:silfRG<+}
Let $R$ be a commutative Gorenstein ring. For any group $G$, the following are equivalent:
\begin{enumerate}
\item ${\rm silf}RG < \infty$.
\item ${\rm Ghd}_RG < \infty$, every Gorenstein flat $RG$-module has finite Gorenstein projective dimension and every flat Gorenstein projective $RG$-module is projective.
\end{enumerate}
\end{cor}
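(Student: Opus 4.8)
The plan is to prove the equivalence \eqref{cor:silfRG<+} by combining the already-established Corollary \ref{cor:GP-F(RG)}, Proposition \ref{prop:silfRG<}, and a characterization of when a ring has finite \textup{silf} in terms of the three conditions in (2). For the implication (1)$\Longrightarrow$(2), I would invoke Corollary \ref{cor:GP-F(RG)}: if ${\rm silf}RG<\infty$ then ${\rm Ghd}_RG<\infty$ and every Gorenstein flat $RG$-module $M$ satisfies ${\rm Gpd}_{RG}M<\infty$, which gives the first two conditions directly. For the third, note that a flat Gorenstein projective module is in particular Gorenstein flat, hence of finite Gorenstein projective dimension; but a Gorenstein projective module of finite Gorenstein projective dimension is projective (this is the standard fact ${\rm Gpd}=0$ together with finiteness forcing projectivity, cf. \cite[Proposition 2.27]{Hol04}), so every flat Gorenstein projective $RG$-module is projective.

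For the converse (2)$\Longrightarrow$(1), the strategy is to show the three conditions in (2) force ${\rm splf}RG<\infty$ and ${\rm sfli}RG<\infty$, and then apply Proposition \ref{prop:silfRG<}. Since ${\rm Ghd}_RG<\infty$ and $R$ is commutative Gorenstein (so ${\rm sfli}R<\infty$), Theorem \ref{thm:fGhd} gives ${\rm sfli}RG<\infty$; this is the easy half. The harder half is ${\rm splf}RG<\infty$: I would take an arbitrary flat $RG$-module $F$ and show ${\rm pd}_{RG}F<\infty$. Since ${\rm sfli}RG<\infty$, $F$ is Gorenstein flat (indeed any flat module is Gorenstein flat, but more is true: with ${\rm sfli}RG<\infty$ flat resolutions are totally acyclic), so by hypothesis ${\rm Gpd}_{RG}F<\infty$. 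Now take a Gorenstein projective approximation $0\to K\to Q\to F\to 0$ with $Q$ Gorenstein projective and ${\rm pd}_{RG}K<\infty$ (from ${\rm Gpd}_{RG}F<\infty$, via \cite[Theorem 2.10]{Hol04}). Since ${\rm pd}_{RG}K<\infty$ and $F$ is flat, $K$ is flat as well (an extension-closure argument on the flat dimension in the sequence $0\to K\to Q\to F\to 0$ after noting $Q$ is flat because $Q$ is Gorenstein projective with... ), actually the cleaner route: $Q$ is Gorenstein projective and, being an extension of flat $F$ by the finite-projective-dimension module $K$, one checks $Q$ has finite flat dimension, hence $Q$ is flat (a Gorenstein projective module of finite flat dimension is flat). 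Then $Q$ is a flat Gorenstein projective $RG$-module, so by the third hypothesis $Q$ is projective. Therefore ${\rm pd}_{RG}F\leq {\rm pd}_{RG}K+1<\infty$, and taking the supremum over flat $F$ gives ${\rm splf}RG<\infty$. With ${\rm sfli}RG<\infty$ and ${\rm splf}RG<\infty$ in hand, Proposition \ref{prop:silfRG<} yields ${\rm silf}RG<\infty$.

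I expect the main obstacle to be the bookkeeping in the converse direction, specifically verifying that the middle term $Q$ of the Gorenstein projective approximation of a flat module is itself flat. The point is to propagate flatness: $F$ is flat and $K$ has finite projective (hence finite flat) dimension, so from $0\to K\to Q\to F\to 0$ one gets ${\rm fd}_{RG}Q<\infty$; combined with $Q$ being Gorenstein projective—hence Gorenstein flat—and the fact that a Gorenstein flat module of finite flat dimension is flat (\cite[Corollary 10.3.4]{EJ00}, already cited in the paper), we conclude $Q$ is flat. One must be slightly careful that the Gorenstein projective approximation exists, but this is guaranteed since ${\rm Gpd}_{RG}F<\infty$. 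Everything else is a routine assembly of the quoted results, and the symmetry/finiteness transfers are handled by Theorem \ref{thm:fGhd} and Proposition \ref{prop:silfRG<} as black boxes.
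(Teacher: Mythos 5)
Your overall architecture for (2)$\Rightarrow$(1) — reduce to the finiteness of ${\rm sfli}RG$ (via Theorem \ref{thm:fGhd}) and of ${\rm splf}RG$, then invoke Proposition \ref{prop:silfRG<} — is the right skeleton, and it is close to the paper's, except that the paper does not argue by hand at all: it simply quotes \cite[Proposition 3.2]{CET21}, which says that ${\rm splf}RG<\infty$ is \emph{equivalent} to the conjunction ``every Gorenstein flat module has finite Gorenstein projective dimension and every flat Gorenstein projective module is projective'', so the corollary becomes a two-line assembly with Theorem \ref{thm:fGhd} and Proposition \ref{prop:silfRG<}. You are essentially re-proving the needed part of that citation, which is legitimate, but two of your steps do not hold up as written.

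First, in (1)$\Rightarrow$(2) your argument for the third condition rests on the claim that a Gorenstein projective module of finite \emph{Gorenstein} projective dimension is projective. This is false: every Gorenstein projective module has ${\rm Gpd}=0$, so your ``fact'' would force all Gorenstein projective modules to be projective. The correct statement (\cite[Proposition 10.2.3]{EJ00}, already cited in the paper) requires finite \emph{projective} dimension. The repair is to use Proposition \ref{prop:silfRG<} first: ${\rm silf}RG<\infty$ gives ${\rm splf}RG<\infty$, hence any flat Gorenstein projective $RG$-module has finite projective dimension and is therefore projective. Second, in (2)$\Rightarrow$(1) you pass from ``$Q$ is Gorenstein projective'' to ``$Q$ is Gorenstein flat'' with no justification; as the paper itself stresses, this implication is open over general rings. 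In the present setting it does hold, but only because Proposition \ref{prop:obs} applies: $R$ Gorenstein gives ${\rm sfli}R\leq{\rm spli}R<\infty$, and together with ${\rm Ghd}_RG<\infty$ every Gorenstein projective $RG$-module is PGF, hence Gorenstein flat. With that citation inserted, the rest of your bookkeeping (finite flat dimension forces $Q$ flat by \cite[Corollary 10.3.4]{EJ00}, the third hypothesis forces $Q$ projective, so ${\rm pd}_{RG}F\leq{\rm pd}_{RG}K+1<\infty$ and ${\rm splf}RG<\infty$) goes through, and the proposal becomes correct, at the cost of duplicating work the paper delegates to \cite{CET21}.
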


\begin{proof}
Note that for any commutative Gorenstein ring $R$, ${\rm spli}R = {\rm silp}R < \infty$. Then, ${\rm sfli}R \leq {\rm spli}R<\infty$, and
by Theorem \ref{thm:fGhd} we have ${\rm Ghd}_RG < \infty$ if and only if ${\rm sfli}RG<\infty$. It follows from
\cite[Proposition 3.2]{CET21} that ${\rm splf}RG<\infty$ if and only if every Gorenstein flat $RG$-module has finite Gorenstein projective dimension and every flat Gorenstein projective $RG$-module is projective. Hence, the assertion follows immediately from Proposition \ref{prop:silfRG<}.
\end{proof}

It follows from \cite[Proposition 10.2.3]{EJ00} that any Gorenstein projective module of finite projective dimension is necessarily projective. For modules over group rings, the hypothesis on the finiteness of the projective dimension may be relaxed as shown below.

\begin{cor}\label{cor:GP+F=P}
Let $R$ be a commutative Gorenstein ring, and $G$ be a group. If ${\rm silf}RG < \infty$, then any Gorenstein projective $RG$-module of finite flat dimension is necessarily projective.
\end{cor}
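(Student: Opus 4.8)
The plan is to combine the previous corollary with the structural consequences of \(\mathrm{silf}\,RG<\infty\). Suppose \(M\) is a Gorenstein projective \(RG\)-module with \(\mathrm{fd}_{RG}M<\infty\); the goal is to conclude \(M\) is projective. By Corollary \ref{cor:GP-F(RG)}(1), \(M\) is also Gorenstein flat. Now Proposition \ref{prop:silfRG<} tells us that \(\mathrm{silf}\,RG<\infty\) forces \(\overline{\mathcal{P}}(RG)=\overline{\mathcal{F}}(RG)\); since \(M\) has finite flat dimension, it therefore has finite projective dimension. A Gorenstein projective module of finite projective dimension is projective, by \cite[Proposition 10.2.3]{EJ00}. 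That is the whole argument.

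First I would invoke Corollary \ref{cor:GP-F(RG)}(1) to promote "Gorenstein projective" to "Gorenstein flat" — this is where the group-ring hypothesis and the commutative Gorenstein assumption on \(R\) are really used, via Theorem \ref{thm:fGhd} and Proposition \ref{prop:obs}. Then I would feed the hypothesis \(\mathrm{fd}_{RG}M<\infty\) into the equality \(\overline{\mathcal{P}}(RG)=\overline{\mathcal{F}}(RG)\) from the equivalence (1)\(\Leftrightarrow\)(3) of Proposition \ref{prop:silfRG<}, obtaining \(\mathrm{pd}_{RG}M<\infty\). Finally I would close with the classical fact that a Gorenstein projective module with finite projective dimension must be projective, which is exactly \cite[Proposition 10.2.3]{EJ00} quoted just before the statement. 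Strictly speaking one does not even need to pass through "Gorenstein flat": the finite flat dimension of \(M\) already gives finite projective dimension by Proposition \ref{prop:silfRG<}, and then \cite[Proposition 10.2.3]{EJ00} applies directly to the Gorenstein projective module \(M\). The detour through Corollary \ref{cor:GP-F(RG)} is optional decoration.

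There is no genuine obstacle here — the corollary is a one-line deduction assembled from three already-established facts; the only thing to be careful about is making sure the hypotheses of Proposition \ref{prop:silfRG<} and of Corollary \ref{cor:GP-F(RG)} are all satisfied, which they are since \(R\) is commutative Gorenstein and \(\mathrm{silf}\,RG<\infty\) is assumed. The mild subtlety worth spelling out in the proof is why "finite flat dimension" upgrades to "finite projective dimension": this is precisely the content of \(\overline{\mathcal{F}}(RG)\subseteq\overline{\mathcal{P}}(RG)\), equivalently \(\mathrm{splf}\,RG<\infty\), which Proposition \ref{prop:silfRG<} extracts from \(\mathrm{silf}\,RG<\infty\). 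Everything else is a citation.
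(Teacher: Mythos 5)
Your argument is correct and is exactly the deduction the paper intends (the corollary is stated without proof right after quoting \cite[Proposition 10.2.3]{EJ00}): ${\rm silf}\,RG<\infty$ gives $\overline{\mathcal{P}}(RG)=\overline{\mathcal{F}}(RG)$ by Proposition \ref{prop:silfRG<}, so finite flat dimension upgrades to finite projective dimension, and then the cited fact finishes. You are also right that the detour through Corollary \ref{cor:GP-F(RG)} is unnecessary.
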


It is trivially true that for any finite group $G$, the integral group ring $\mathbb{Z}G$ is noetherian, while it is quite subtle for infinite $G$. It follows from \cite[Theorem 1.1]{Hall} that for any virtually polycyclic group $G$, the group ring $\mathbb{Z}G$ is noetherian. Recall that a group $G$ is polycyclic if there exists a subnormal sequence
$$G = G_1 \rhd G_2 \rhd \cdots \rhd G_n = 1$$
such that $G_k/G_{k+1}$ is cyclic for all $1 \leq k < n$. The minimal length of such a subnormal sequence is the Hirsch length of $G$. A group is virtually polycyclic, also named a polycyclic-by-finite group, if it has a polycyclic subgroup of finite index.

\begin{prop}\label{prop:silf=sfli}
Let $R$ be a commutative Gorenstein ring. Let $G$ be a group such that $RG$ is a left noetherian ring. Then ${\rm sfli}RG < \infty$ if and only if ${\rm silf}RG < \infty$. In this case, one has ${\rm sfli}RG = {\rm silf}RG$.
\end{prop}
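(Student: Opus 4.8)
The plan is to reduce the whole statement, via Proposition \ref{prop:silfRG<}, to the single assertion that ${\rm sfli}RG<\infty$ forces ${\rm splf}RG<\infty$. Indeed, $({\rm silf}RG<\infty)\Rightarrow({\rm sfli}RG<\infty)$ is already part of the equivalence (1)$\Leftrightarrow$(2) of Proposition \ref{prop:silfRG<}, and that proposition also tells us that ${\rm silf}RG<\infty$ is equivalent to the conjunction of ${\rm sfli}RG<\infty$ and ${\rm splf}RG<\infty$. One preliminary observation is useful: since $R$ is commutative, the assignment $\sum_{g}a_{g}g\mapsto\sum_{g}a_{g}g^{-1}$ is a ring anti-automorphism of $RG$ --- this is exactly the anti-automorphism already exploited in Lemma \ref{lem:RG-antisym} --- so $RG\cong(RG)^{\rm op}$, and hence the assumption that $RG$ is left noetherian makes $RG$ two-sided noetherian.

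So assume ${\rm sfli}RG<\infty$. The crucial step is to show that $RG$ is an Iwanaga--Gorenstein ring, i.e. ${\rm id}_{RG}(RG)={\rm id}_{(RG)^{\rm op}}(RG)=:d<\infty$; equivalently ${\rm G.gldim}RG=d<\infty$. By \cite[Theorem 5.3]{Emm12} and \cite[Corollary 1.5]{CET21} the hypothesis gives ${\rm G.wgldim}RG<\infty$, and for a two-sided noetherian ring the finiteness of the Gorenstein weak global dimension is well known to characterize Iwanaga--Gorenstein rings: over the noetherian ring $RG$ the Gorenstein flat dimension of a finitely generated module coincides with its Gorenstein projective dimension, so ${\rm G.wgldim}RG<\infty$ forces every finitely generated $RG$-module to have finite Gorenstein (Auslander--Bridger) dimension, and then the Auslander--Bridger characterization of Gorenstein rings (in its two-sided noetherian form) yields that $RG$ is Iwanaga--Gorenstein. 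For such a ring one has ${\rm G.gldim}RG={\rm G.wgldim}RG={\rm id}_{RG}(RG)=d$.

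With this in hand ${\rm splf}RG$ is controlled: for any flat $RG$-module $F$, Jensen's theorem (the projective dimension of a flat module over an arbitrary ring is finite) gives ${\rm pd}_{RG}F<\infty$; since ${\rm G.gldim}RG=d$, a $d$-th syzygy of $F$ is Gorenstein projective, and being also of finite projective dimension it is projective by \cite[Proposition 10.2.3]{EJ00}, so ${\rm pd}_{RG}F\le d$. Hence ${\rm splf}RG\le d<\infty$, and Proposition \ref{prop:silfRG<} gives ${\rm silf}RG<\infty$, which proves the equivalence. For the equality ${\rm sfli}RG={\rm silf}RG$, observe that for the Iwanaga--Gorenstein ring $RG$ both sides equal $d={\rm id}_{RG}(RG)$: on one hand ${\rm sfli}RG={\rm G.wgldim}RG=d$; on the other hand, now that ${\rm splf}RG<\infty$, the argument of Proposition \ref{prop:silfRG<}, (2)$\Rightarrow$(1), gives ${\rm silf}RG={\rm silp}RG$, and ${\rm silp}RG={\rm id}_{RG}(RG)=d$ because over the noetherian ring $RG$ every direct sum of copies of $RG$ has the same (finite) injective dimension as $RG$ by Bass--Papp, while every projective module is a direct summand of a free one.

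The routine parts are the reduction through Proposition \ref{prop:silfRG<} and the identification of the various invariants with ${\rm id}_{RG}(RG)$. The main obstacle is the implication ``${\rm sfli}RG<\infty$ and $RG$ noetherian $\Rightarrow$ $RG$ Iwanaga--Gorenstein'': this is where the noetherian hypothesis is genuinely used, and it rests on non-formal input (the Auslander--Bridger theory, together with the coincidence of Gorenstein flat and Gorenstein projective dimension on finitely generated modules over a noetherian ring). Once $RG$ is Iwanaga--Gorenstein, Jensen's finiteness theorem settles ${\rm splf}RG$ immediately; one should only keep in mind that the inequality ${\rm G.gldim}RG\le{\rm G.wgldim}RG+{\rm splf}RG$ of Proposition \ref{prop:Gwd<Ggd} does not by itself give ${\rm G.gldim}RG={\rm G.wgldim}RG$, so that equality has to be imported from the standard theory of Iwanaga--Gorenstein rings.
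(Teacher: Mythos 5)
Your overall plan (reduce via Proposition \ref{prop:silfRG<} to ``${\rm sfli}RG<\infty \Rightarrow {\rm splf}RG<\infty$'', and get this by showing $RG$ is Iwanaga--Gorenstein) is different from the paper's proof, which never mentions Iwanaga--Gorensteinness: the paper proves ${\rm sfli}RG\le{\rm silp}RG={\rm silf}RG$ from \cite{Emm10, ET11}, and for the converse takes a flat module $F$, truncates an injective coresolution after $n={\rm sfli}RG$ steps and applies Pontryagin duality $(-)^+$, using left noetherianness (duals of injectives are flat) and the symmetry ${\rm sfli}RG={\rm sfli}(RG)^{\rm op}$ to conclude the cosyzygy is injective, i.e.\ ${\rm silf}RG\le n$. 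Your route could in principle work, but as written its central step has genuine gaps. First, the ``Auslander--Bridger characterization'' you invoke is misstated: finiteness of the G-dimension of all finitely generated modules does \emph{not} characterize Iwanaga--Gorenstein rings (a commutative noetherian regular ring of infinite Krull dimension \`a la Nagata has every finitely generated module of finite G-dimension, yet ${\rm id}_RR=\infty$); one needs a uniform bound. The bound is in fact available here (every module has ${\rm Gfd}\le {\rm G.wgldim}RG=n$), but you do not say so, and more seriously you then need the claim that over the two-sided noetherian ring $RG$ the Gorenstein flat dimension of a finitely generated module coincides with its Gorenstein projective (Auslander--Bridger) dimension. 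That coincidence is standard for commutative noetherian rings, but $RG$ is in general noncommutative, and for arbitrary two-sided noetherian rings it is exactly the kind of Gorenstein projective/flat comparison this paper is at pains to avoid; it is not an off-the-shelf citation, so this is where your proof is missing an argument. (It can be supplied: for finitely generated $M$ one has $\mathrm{Ext}^i_{RG}(M,RG)^+\cong \mathrm{Tor}_i^{RG}((RG)^+,M)$ with $(RG)^+$ injective, so ${\rm Gfd}_{RG}M\le n$ gives $\mathrm{Ext}^{>n}_{RG}(M,RG)=0$ and hence ${\rm id}_{RG}RG\le n$ by Baer --- but note this is essentially the same character-module device the paper uses, so the ``non-formal Auslander--Bridger input'' you lean on is neither needed nor correctly quoted.)

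Second, the parenthetical ``Jensen's theorem (the projective dimension of a flat module over an arbitrary ring is finite)'' is false as stated: Gruson--Jensen/Osofsky give a bound only in terms of cardinality ($\aleph_n$ gives ${\rm pd}\le n+1$), and the possible infinitude of ${\rm splf}$ is precisely why it appears as a genuine hypothesis in Proposition \ref{prop:silfRG<} and Proposition \ref{prop:Gwd<Ggd}. This particular flaw is repairable in your setting: once $RG$ is known to be Iwanaga--Gorenstein with ${\rm id}=d$, finite flat dimension implies projective dimension $\le d$ by \cite[Theorem 9.1.10]{EJ00}, so ${\rm splf}RG\le d$ without any appeal to Jensen. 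Your closing identifications ${\rm sfli}RG={\rm G.wgldim}RG=d={\rm id}_{RG}RG={\rm silf}RG$ are correct for an Iwanaga--Gorenstein group ring (using Bass--Papp and ${\rm silf}={\rm silp}$), though they too are asserted rather than proved. In summary: the Iwanaga--Gorenstein strategy is a legitimate alternative to the paper's direct duality argument, but the key implication ``noetherian $+$ finite Gorenstein weak global dimension $\Rightarrow$ Iwanaga--Gorenstein'' is not justified by the facts you cite, and the Jensen citation must be replaced.
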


\begin{proof}
The ``if'' part follows from Proposition \ref{prop:silfRG<}, that is, ${\rm silf}RG < \infty$ implies ${\rm sfli}RG < \infty$. By \cite[Proposition 4.2]{Emm10} we have an inequality ${\rm sfli}RG \leq {\rm silp}RG$. Moreover, it follows from
\cite[Proposition 2.1]{ET11} that ${\rm silf}RG = {\rm silp}RG$; see also \cite[Theorem 3.3]{ABHS11}. Hence, we have ${\rm sfli}RG \leq {\rm silf}RG$. Here, it is not necessary to assume that the group ring $RG$ is noetherian.

For the ``only if'' part, we assume $RG$ is a left noetherian ring, and ${\rm sfli}RG = n < \infty$. Let $F$ be any flat left $RG$-module. Consider the exact sequence of left $RG$-modules
$$0\longrightarrow F\longrightarrow I_0\longrightarrow \cdots \longrightarrow I_{n-1}\longrightarrow L\longrightarrow 0,$$
for which $I_i$ ($0\leq i\leq n-1$) are injective modules. Denote by $(-)^+$ the Pontryagin dual ${\rm Hom}_\mathbb{Z}(-, \mathbb{Q}/\mathbb{Z})$. Then we get an exact sequence of right $RG$-modules
$$0\longrightarrow L^+\longrightarrow I_{n-1}^+\longrightarrow \cdots \longrightarrow I_1^+\longrightarrow I_0^+\longrightarrow F^+\longrightarrow 0.$$
It is clear that $F^+$ is an injective $RG$-module. Since $RG$ is left noetherian, it follows that $I_i^+$ are flat right $RG$-modules for $0\leq i\leq n-1$; see for example \cite[Corollary 3.2.17]{EJ00}. Then, the assumption ${\rm sfli}RG = n$ implies that ${\rm sfli}(RG)^{op} = n$, and furthermore $L^+$ is a flat right $RG$-module. Hence, $L$ is an injective left $RG$-module by \cite[Corollary 3.2.17]{EJ00}. Consequently, we get the desired inequality ${\rm silf}RG \leq n$. This completes the proof.
\end{proof}

It is well known that ${\rm G.gldim}\Lambda < \infty$ if and only if ${\rm spli}\Lambda = {\rm silp}\Lambda < \infty$; in this case, one has ${\rm G.gldim}\Lambda = {\rm spli}\Lambda = {\rm silp}\Lambda$; see for example \cite[Theorem 4.1]{Emm12}.
For PGF-global dimension, it follows from \cite[Theorem 5.1]{DE22} that ${\rm PGF}\text{-}{\rm gldim}\Lambda <\infty$ if and only if ${\rm spli}\Lambda = {\rm silp}\Lambda < \infty$ and ${\rm sfli}\Lambda = {\rm sfli}\Lambda^{op} < \infty$; in this case one has $${\rm PGF}\text{-}{\rm gldim}\Lambda = {\rm spli}\Lambda = {\rm silp}\Lambda = {\rm G.gldim}\Lambda.$$
Moreover, we have the following.

\begin{prop}\label{prop:spli<}
Let $R$ be a commutative ring, and $G$ be a group. Then ${\rm spli}RG < \infty$ if and only if ${\rm PGF}\text{-}{\rm gldim}RG = {\rm G.gldim}RG < \infty$; in this case, ${\rm PGF}\text{-}{\rm gldim}RG = {\rm spli}RG$.
\end{prop}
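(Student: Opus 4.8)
The plan is to combine the general equivalences for ${\rm PGF}\text{-}{\rm gldim}$ and ${\rm G.gldim}$ (recalled just before the statement from \cite{DE22}) with the special feature of group rings over a \emph{commutative} coefficient ring, namely that ${\rm spli}RG = {\rm silp}RG$ whenever $R$ is commutative Gorenstein — but since here $R$ is only assumed commutative, I will instead have to play ${\rm spli}$ against ${\rm sfli}$ directly. First I would record the easy direction: if ${\rm PGF}\text{-}{\rm gldim}RG = {\rm G.gldim}RG < \infty$, then by \cite[Theorem 5.1]{DE22} this forces ${\rm spli}RG = {\rm silp}RG < \infty$, so in particular ${\rm spli}RG < \infty$ and the claimed equality ${\rm PGF}\text{-}{\rm gldim}RG = {\rm spli}RG$ is part of that same citation. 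So the whole content is the forward implication.

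For the forward implication, assume ${\rm spli}RG = n < \infty$. By \cite[Theorem 5.1]{DE22} it suffices to produce two things: (a) ${\rm silp}RG < \infty$, and (b) ${\rm sfli}RG = {\rm sfli}(RG)^{op} < \infty$. For (a) I would invoke \cite[Theorem 2.4]{GG87}: for any ring, ${\rm spli}\Lambda < \infty$ implies ${\rm silp}\Lambda < \infty$ (this is exactly the statement quoted earlier in the paper, "${\rm spli}RG < \infty$ implies ${\rm silp}RG < \infty$"), and moreover by \cite[1.6]{GG87} they are then equal. For (b), the key observation is that ${\rm spli}RG < \infty$ gives a bound on the projective (hence flat) dimension of every injective $RG$-module, so ${\rm sfli}RG \le {\rm spli}RG = n < \infty$; since Gorenstein weak global dimension is symmetric (\cite[Theorem 5.3, 5.12]{Emm12} or \cite[Corollary 2.5]{CET21}), ${\rm sfli}RG = {\rm sfli}(RG)^{op} < \infty$ as well. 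Feeding (a) and (b) into \cite[Theorem 5.1]{DE22} yields ${\rm PGF}\text{-}{\rm gldim}RG = {\rm G.gldim}RG < \infty$, together with the chain ${\rm PGF}\text{-}{\rm gldim}RG = {\rm spli}RG = {\rm silp}RG = {\rm G.gldim}RG$, which in particular gives the asserted ${\rm PGF}\text{-}{\rm gldim}RG = {\rm spli}RG$.

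The main obstacle I anticipate is verifying that ${\rm sfli}RG \le {\rm spli}RG$ cleanly — i.e. that finiteness of the projective dimension of every injective module really controls the flat dimension of every injective module. This is essentially immediate (a projective resolution of finite length is in particular a flat resolution of finite length), so it is not a serious difficulty; the only subtlety is making sure one really has the two-sided ("$(RG)^{op}$") statement needed to apply \cite[Theorem 5.1]{DE22}, which is handled by the symmetry of Gorenstein weak global dimension. A secondary point worth stating explicitly is that no hypothesis on $R$ beyond commutativity is needed here, in contrast to the earlier propositions in this section where $R$ was assumed Gorenstein; the group-ring structure is used only insofar as it is inherited by the general ring-theoretic results being cited.
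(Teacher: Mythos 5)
Your overall strategy coincides with the paper's: reduce everything to the hypotheses of \cite[Theorem 5.1]{DE22}, i.e. produce ${\rm silp}RG<\infty$ and ${\rm sfli}RG={\rm sfli}(RG)^{op}<\infty$ from ${\rm spli}RG<\infty$. The easy direction and your step (b) (${\rm sfli}RG\leq{\rm spli}RG$ plus the symmetry of the Gorenstein weak global dimension) are fine and match the paper. The problem is step (a). You justify ${\rm spli}RG<\infty\Rightarrow{\rm silp}RG<\infty$ by \cite[Theorem 2.4]{GG87}, asserting that it holds ``for any ring''. It does not: as this paper itself quotes it (and as it is used in Proposition \ref{prop:silfRG<}), the Gedrich--Gruenberg result concerns group rings $RG$ with $R$ a commutative \emph{Gorenstein} (noetherian, finite self-injective dimension) coefficient ring. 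In the proposition at hand $R$ is only assumed commutative, so that citation gives you nothing, and the general statement ``${\rm spli}\Lambda<\infty\Rightarrow{\rm silp}\Lambda<\infty$ for every ring $\Lambda$'' is not available. Your closing remark that the group-ring structure enters only through ``general ring-theoretic results'' is exactly backwards here: the inequality ${\rm silp}RG\leq{\rm spli}RG$ is a genuinely group-ring-specific fact.

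The paper closes this gap by citing \cite[Corollary 5.4]{DE22}, which gives ${\rm silp}RG\leq{\rm spli}RG$ for group rings over an arbitrary commutative ring, together with \cite[1.6]{GG87} to upgrade this to equality once ${\rm spli}RG<\infty$; everything else in your argument then goes through verbatim. So the proposal is structurally correct but has a genuine gap at the single point where the weakened hypothesis on $R$ (commutative, not Gorenstein) actually matters, and repairing it requires replacing the misapplied \cite[Theorem 2.4]{GG87} by the group-ring inequality from \cite{DE22}.
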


\begin{proof}
It suffices to prove the ``only if'' part. It follows from \cite[Corollary 5.4]{DE22} and \cite[1.6]{GG87} that ${\rm silp}RG \leq {\rm spli}RG$ with the equality if ${\rm spli}RG<\infty$. Invoking the finiteness of ${\rm spli}RG$, we infer that ${\rm sfli}(RG)^{op} = {\rm sfli}RG < \infty$. Then the assertion and the equality ${\rm PGF}\text{-}{\rm gldim}RG = {\rm spli}RG$ follow immediately from \cite[Theorem 5.1]{DE22}.
\end{proof}

It is well known that flat dimension of any module is a refinement of its projective dimension, while it is not clear if the inequality ${\rm Gfd}M\leq {\rm Gpd}M$ holds ``locally'' for any module $M$. However, for group rings the inequality follows ``globally'' as shown below.

\begin{prop}\label{prop:Gwd<Ggd}
Let $R$ be an commutative ring  and $G$ a group. Then there is an inequality
\begin{center}${\rm G.wgldim}RG \leq {\rm PGF}\text{-}{\rm gldim}RG = {\rm G.gldim}RG \leq {\rm G.wgldim}RG + {\rm splf}RG$.\end{center}
\end{prop}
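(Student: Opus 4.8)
The plan is to prove the chain of (in)equalities from left to right. The middle equality ${\rm PGF}\text{-}{\rm gldim}RG = {\rm G.gldim}RG$ is the cheapest: if either side is finite, then by \cite[Theorem 5.1]{DE22} (or Proposition \ref{prop:spli<}) both equal ${\rm spli}RG = {\rm silp}RG$, and if both are infinite the equality is trivial. So the work is entirely in the two outer inequalities, and I would dispose of the degenerate cases first: if ${\rm PGF}\text{-}{\rm gldim}RG = \infty$ the left inequality is vacuous, and if ${\rm G.wgldim}RG = \infty$ or ${\rm splf}RG = \infty$ the right inequality is vacuous. Hence I may assume all three quantities ${\rm G.gldim}RG$, ${\rm G.wgldim}RG$, ${\rm splf}RG$ are finite in the relevant directions.

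\textbf{Left inequality} ${\rm G.wgldim}RG \leq {\rm PGF}\text{-}{\rm gldim}RG$. Suppose ${\rm PGF}\text{-}{\rm gldim}RG = d < \infty$. Then every $RG$-module has PGF-dimension $\leq d$; since every PGF-module is Gorenstein flat (noted just before Proposition \ref{prop:obs}), a PGF-resolution of length $d$ is in particular a Gorenstein flat resolution of length $d$, giving ${\rm Gfd}_{RG}M \leq d$ for every $M$, i.e. ${\rm G.wgldim}RG \leq d$. This is essentially immediate once one recalls PGF $\Rightarrow$ Gorenstein flat. (Alternatively: finiteness of ${\rm PGF}\text{-}{\rm gldim}RG$ forces ${\rm sfli}RG = {\rm sfli}(RG)^{op} < \infty$ by \cite[Theorem 5.1]{DE22}, hence ${\rm G.wgldim}RG = {\rm sfli}RG < \infty$ by \cite[Theorem 5.3]{Emm12}, and then ${\rm sfli}RG \leq {\rm spli}RG = {\rm PGF}\text{-}{\rm gldim}RG$ by \cite[Proposition 4.2]{Emm10}; but the direct argument via PGF $\Rightarrow$ Gorenstein flat is cleaner.)

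\textbf{Right inequality} ${\rm G.gldim}RG \leq {\rm G.wgldim}RG + {\rm splf}RG$. Assume ${\rm G.wgldim}RG = n < \infty$ and ${\rm splf}RG = s < \infty$; I want ${\rm Gpd}_{RG}M \leq n + s$ for every $RG$-module $M$. Take a projective resolution and let $K = {\rm Ker}(P_{n-1} \to P_{n-2})$ be the $n$-th syzygy, so that ${\rm Gpd}_{RG}M \leq n + {\rm Gpd}_{RG}K$ by the standard syzygy-shift for Gorenstein projective dimension (\cite[Theorem 2.20]{Hol04}); it thus suffices to show ${\rm Gpd}_{RG}K \leq s$. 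Now ${\rm G.wgldim}RG = n$ gives ${\rm Gfd}_{RG}K \leq {\rm G.wgldim}RG$, but more to the point the $n$-th syzygy $K$ has ${\rm fd}_{RG}K \leq n - n$... — no: the right statement is that ${\rm Gfd}_{RG}M \leq n$ for all $M$ is equivalent to ${\rm sfli}RG \leq n$ by \cite[Theorem 5.3]{Emm12}, and one then argues that the $n$-th syzygy $K$ is \emph{Gorenstein flat}: indeed ${\rm Gfd}_{RG}K \leq {\rm Gfd}_{RG}M - n \leq n - n = 0$ when ${\rm Gfd}_{RG}M$ happens to be large, and in general one uses that over a ring with ${\rm sfli}RG = n$ finite, syzygies past step $n$ are Gorenstein flat (this is \cite[Remark 2.3]{Emm12} / the fact that acyclic complexes of flats are totally acyclic). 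So $K$ is Gorenstein flat. Finally, a Gorenstein flat module $K$ sits in a short exact sequence $0 \to K \to F \to K' \to 0$ with $F$ flat and $K'$ Gorenstein flat; iterating and using ${\rm splf}RG = s$ (every flat $RG$-module has projective dimension $\leq s$) together with the characterization of ${\rm splf}$ in \cite[Proposition 3.2]{CET21} — which says precisely that when ${\rm splf}RG < \infty$ every Gorenstein flat module has finite Gorenstein projective dimension, bounded by ${\rm splf}RG$ — one gets ${\rm Gpd}_{RG}K \leq s$. Combining, ${\rm Gpd}_{RG}M \leq n + s$.

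\textbf{Main obstacle.} The only genuinely delicate point is the passage "${\rm G.wgldim}RG = n$ finite $\Rightarrow$ the $n$-th syzygy of any module is Gorenstein flat," which rests on the fact that finiteness of ${\rm sfli}RG$ makes every acyclic complex of flat $RG$-modules totally acyclic, so that a module with finite Gorenstein flat dimension admits a "nice" resolution whose high syzygies are Gorenstein flat; this is exactly where \cite[Theorem 5.3]{Emm12} and the symmetry ${\rm sfli}RG = {\rm sfli}(RG)^{op}$ (\cite[Corollary 2.5]{CET21}) are used, and where I would need to be careful. Everything else — the syzygy shift for ${\rm Gpd}$, the dévissage for Gorenstein flat modules against ${\rm splf}$ via \cite[Proposition 3.2]{CET21}, and PGF $\Rightarrow$ Gorenstein flat — is a direct appeal to results already cited in the paper.
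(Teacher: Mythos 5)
Your proposal is correct, and it reorganizes the chain somewhat differently from the paper. For the left-hand inequality the paper proves ${\rm G.wgldim}RG \leq {\rm G.gldim}RG$ by passing through the invariants: ${\rm G.gldim}RG = n<\infty$ forces ${\rm spli}RG={\rm silp}RG=n$, hence ${\rm sfli}RG\leq n$ and ${\rm G.wgldim}RG={\rm sfli}RG\leq n$ by \cite[Theorems 4.1 and 5.3]{Emm12}; your direct observation that a PGF-resolution is already a Gorenstein flat resolution gives ${\rm G.wgldim}RG\leq{\rm PGF}\text{-}{\rm gldim}RG$ unconditionally and is simpler (it is the alternative you note in parentheses, stripped of the invariants). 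For the right-hand inequality the paper simply quotes \cite[Theorem 3.3]{CET21}, whereas you unwind it: syzygy shift over $n={\rm G.wgldim}RG$, the $n$-th syzygy $K$ is Gorenstein flat (legitimate over any ring thanks to \cite{SS20}; your intermediate line ``${\rm Gfd}_{RG}K\leq {\rm Gfd}_{RG}M-n$'' is garbled, but the fact you fall back on is the right one), and then ${\rm Gpd}$ of a Gorenstein flat module is at most ${\rm splf}RG$. Be careful with the attribution of that last bound: as used elsewhere in this paper, \cite[Proposition 3.2]{CET21} is only the qualitative statement (${\rm splf}RG<\infty$ iff every Gorenstein flat module has finite ${\rm Gpd}$ and every flat Gorenstein projective module is projective); the quantitative bound ${\rm Gpd}_{RG}K\leq {\rm splf}RG$ is really part of the content of \cite[Theorem 3.3]{CET21}, or can be deduced from the short exact sequence $0\to K\to F\to G\to 0$ with $F$ flat and $G$ a PGF-module that \cite{SS20} provides for any Gorenstein flat $K$, which yields ${\rm Gpd}_{RG}K\leq {\rm pd}_{RG}F\leq {\rm splf}RG$. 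The middle equality you treat exactly as the paper does, via \cite[Theorem 5.1]{DE22} together with ${\rm spli}RG={\rm silp}RG<\infty$ when ${\rm G.gldim}RG<\infty$ (equivalently, via Proposition \ref{prop:spli<}). In sum, the paper's route buys brevity by citation, while yours buys a self-contained argument for the right-hand inequality and a cleaner, hypothesis-free proof of the left-hand one.
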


\begin{proof}
The inequality ${\rm G.wgldim}RG \leq {\rm G.gldim}RG$ follows from \cite[Corollary 1.2 (1)]{BM10}. For completeness, we briefly include an argument in \cite[Remark 5.4 (ii)]{Emm12}, which is different from that of \cite{BM10}. The inequality is obvious if ${\rm G.gldim}RG=\infty$  and hence we may assume that ${\rm G.gldim}RG = n < \infty$.  Then it follows from \cite[Theorem 4.1]{Emm12} that ${\rm silp}RG = {\rm spli}RG = n$. By the definition, we have ${\rm sfli}RG \leq {\rm spli}RG$. Hence, it follows from
\cite[Theorem 5.3]{Emm12} that ${\rm G.wgldim}RG = {\rm sfli}RG \leq n$.

The inequality ${\rm G.gldim}RG \leq {\rm G.wgldim}RG + {\rm splf}RG$ is immediate from \cite[Theorem 3.3]{CET21}. It remains to prove the equality ${\rm PGF}\text{-}{\rm gldim}RG = {\rm G.gldim}RG$. It follows from \cite[Theorem 4.4]{SS20} that every PGF-module is Gorenstein projective, and then the inequality  ${\rm G.gldim}RG \leq {\rm PGF}\text{-}{\rm gldim}RG$ is clear. Conversely, in proving the inequality ${\rm G.gldim}RG \geq {\rm PGF}\text{-}{\rm gldim}RG$, it suffices to assume that ${\rm G.gldim}RG  = n$ is finite. Then ${\rm sfli}RG \leq {\rm spli}RG = {\rm silp}RG = n$. Hence, we infer from  \cite[Theorem 5.1]{DE22} that ${\rm PGF}\text{-}{\rm gldim}RG < \infty$, and moreover ${\rm G.gldim}RG = {\rm PGF}\text{-}{\rm gldim}RG$. This completes the proof.
\end{proof}

\vskip 10pt

\noindent {\bf Acknowledgements.}\quad
The first author is grateful to Rui-Peng Zhu for his help in proving Proposition \ref{prop:GhdL=GhdG}. The authors also thank Prof. Emmanouil and  Talelli for sharing their preprint \cite{ET23}. W. Ren is supported by National Natural Science Foundation of China (No. 11871125), and G. Yang is supported by National Natural Science Foundation of China (No. 12161049) and the Natural Science Foundation of Gansu Province of China (No. 21JR7RA295).

\bibliography{}

\vskip 10pt

 {\footnotesize \noindent Wei Ren,\\
 School of Mathematical Sciences, Chongqing Normal University, Chongqing 401331, PR China\\
 }

\vskip 5pt

 {\footnotesize \noindent Gang Yang,\\
 Department of Mathematics, Lanzhou Jiaotong University, Lanzhou 730070, P.R. China\\
 }

\end{document}